\newtheorem{thm}{Theorem}[section]
\newtheorem{cor}[thm]{Corollary}
\newtheorem{lem}[thm]{Lemma}
\newtheorem{remark}[thm]{Remark}
\newtheorem{prop}[thm]{Proposition}
\newtheorem{defn}[thm]{Definition}
\theoremstyle{plain}
\newtheorem{rem}[thm]{Remark}
\numberwithin{equation}{section}
\newcommand{\eps}{\varepsilon}
\newcommand{\RR}{\mathbb{R}}
\newcommand{\TT}{\mathbb{T}}
\newcommand{\dd}{\partial}
\newcommand{\Pcal}{\mathcal{P}}
\newcommand{\Ucal}{\mathcal{U}}
\newcommand{\divv}{\mbox{div }}
\newcommand{\abs}[1]{\left\vert#1\right\vert}
\newcommand{\set}[1]{\left\{#1\right\}}
\newcommand{\pint}[1]{\left[#1\right]}
\newcommand{\pare}[1]{\left(#1\right)}
\newcommand{\norm}[1]{\big\Vert#1\big\Vert}
\newcommand{\inner}[1]{\left(#1\right)}
\newcommand{\comii}[1]{\left<#1\right>}
\newcommand{\com}[1]{\left[#1\right]}
\newcommand{\reff}[1]{(\ref{#1})}
\definecolor{myyellow}{RGB}{255,255,190}
\definecolor{myrose}{RGB}{255,218,185}
\definecolor{mygreen}{RGB}{152,251,152}
\newlength\mytemplen
\newsavebox\mytempbox
\newcommand\yellowbox{%
    \@ifnextchar[
       {\@yellowbox}%
       {\@yellowbox[0pt]}}
\def\@yellowbox[#1]{%
    \@ifnextchar[
       {\@@yellowbox[#1]}%
       {\@@yellowbox[#1][0pt]}}
\def\@@yellowbox[#1][#2]#3{
    \sbox\mytempbox{#3}%
    \mytemplen\ht\mytempbox
    \advance\mytemplen #1\relax
    \ht\mytempbox\mytemplen
    \mytemplen\dp\mytempbox
    \advance\mytemplen #2\relax
    \dp\mytempbox\mytemplen
    \colorbox{myyellow}{\hspace{1em}\usebox{\mytempbox}\hspace{1em}}}
\newcommand\greenbox{%
    \@ifnextchar[
       {\@greenbox}%
       {\@greenbox[0pt]}}
\def\@greenbox[#1]{%
    \@ifnextchar[
       {\@@greenbox[#1]}%
       {\@@greenbox[#1][0pt]}}
\def\@@greenbox[#1][#2]#3{
    \sbox\mytempbox{#3}%
    \mytemplen\ht\mytempbox
    \advance\mytemplen #1\relax
    \ht\mytempbox\mytemplen
    \mytemplen\dp\mytempbox
    \advance\mytemplen #2\relax
    \dp\mytempbox\mytemplen
    \colorbox{mygreen}{\hspace{1em}\usebox{\mytempbox}\hspace{1em}}}
\newcommand\rosebox{%
    \@ifnextchar[
       {\@rosebox}%
       {\@rosebox[0pt]}}
\def\@rosebox[#1]{%
    \@ifnextchar[
       {\@@rosebox[#1]}%
       {\@@rosebox[#1][0pt]}}
\def\@@rosebox[#1][#2]#3{
    \sbox\mytempbox{#3}%
    \mytemplen\ht\mytempbox
    \advance\mytemplen #1\relax
    \ht\mytempbox\mytemplen
    \mytemplen\dp\mytempbox
    \advance\mytemplen #2\relax
    \dp\mytempbox\mytemplen
    \colorbox{myrose}{\hspace{1em}\usebox{\mytempbox}\hspace{1em}}}
\begin{document}

\title{Boundary layer analysis  for the fast horizontal rotating fluids}

\author[W. Li, V.-S. Ngo \& C.-J. Xu]
{Wei-Xi Li, Van-Sang Ngo and Chao-Jiang Xu}
\date{\today}
\address[W.-X. Li]{School of Mathematics and Statistics,  and Computational Science Hubei Key Laboratory, Wuhan University,
Wuhan 430072, China  }
\email{wei-xi.li@whu.edu.cn}
\address[V.-S. Ngo]{Universit\'e de Rouen, CNRS UMR 6085, Laboratoire de Math\'ematiques, 76801 Saint-Etienne du Rouvray, France} 
\email{van-sang.ngo@univ-rouen.fr}

\address[ C.-J. Xu]{ 
 Universit\'e de Rouen, CNRS UMR 6085, Laboratoire de Math\'ematiques, 76801 Saint-Etienne du Rouvray, France\\
and\\
 School of Mathematics and Statistics,   Wuhan University,
Wuhan 430072, China } 
\email{Chao-Jiang.Xu@univ-rouen.fr}

\keywords{Incompressible Navier Stokes equation, boundary layer, rotating fluids}
\subjclass[2000]{35M13, 35Q30, 35Q35, 76U05}

\begin{abstract}

It is well known that, for fast rotating fluids with the axis of rotation being  perpendicular  to the boundary, the boundary layer is of Ekman-type, described by a linear ODE system. In this paper we consider fast rotating fluids, with the axis of rotation being parallel to the boundary. We show that the corresponding boundary layer is describe by a nonlinear, degenerated PDE system which is similar to the  $2$ -D  Prandtl system. Finally, we prove the well-posedness  of the governing system of the boundary layer in the space of analytic functions with respect to tangential variable.
    \end{abstract}

\maketitle

\section{Introduction}\label{section1}

The incompressible Navier-Stokes equation coupled with a large Coriolis term reads
\begin{eqnarray*}
\left\{
    \begin{aligned}
        &\partial_tu^{\varepsilon} - \nu\Delta u^{\varepsilon} + u^{\varepsilon}\cdot\nabla u^{\varepsilon} + \frac{\omega\times u^{\varepsilon}}{\varepsilon} + \nabla p^{\varepsilon} = 0,&&\\
        &\divv u^{\varepsilon} = 0, &&\\
        &u^{\varepsilon}|_{t=0}= u^\varepsilon_0, &&
    \end{aligned}
    \right.	
\end{eqnarray*}
with Dirichlet boundary condition, where $ \frac{\omega\times u_\eps}{\eps}$ stands for the Coriolis force and $\omega$ is the rotation vector, $\eps^{-1}$ the rescaled speed of rotation, $\nu$ the viscosity  coefficients.   The above system   is sufficient to describe the rotation fluids which is a significant part of geophysics.  Due to the earth's self-rotation, we can't neglect the Coriolis force in order to model the oceanography and meteorology dealing with large-scale magnitude.  When   the fluid  is between  a strip  and  the direction of rotation is not parallel to the boundary, we have the well-developed   Ekman layers  to match the interior flow
with Dirichlet boundary condition, cf. \cite{CDGG3,CDGGbook, grenier-masmoudi, masmoudi-2} and the references therein.    The situation will be more complicated when the direction of rotation is parallel to the boundary, considering cylinder  for instance and  letting  the fluid rotate  around the vertical axis. Then we will have   two types of boundaries,  the  horizontal boundary layer which is  Ekman layers and the vertical  boundary layers for  which much less is known, despite various studies \cite{CDGGbook, ste,vooren}. We refer to \cite{CDGGbook} for detailed discussions on the problem of vertical boundary layers.    

In this paper, we consider the fast rotating viscous fluids  where the the axe of rotation is horizontal  with respect to the boundary.  We prove that the governing equation  for boundary layer is nonlinear PDE system which is similar to classical  $2$ -D  Prandtl boundary layer system,  and we also obtain  the well-posedness  of this vertical  boundary layers in the space of analytic functions.

  As a  preliminary step we first consider the half space case  $\mathbb R^3_+ = \mathbb{R}^2_h \times \mathbb{R}_+$.  More precisely, we consider the following system
\begin{displaymath}
	\label{NSCeps}\tag{N-S$_\varepsilon$}
	\left\{
    \begin{aligned}
        &\partial_tu^{\varepsilon} - \nu\Delta u^{\varepsilon} + u^{\varepsilon}\cdot\nabla u^{\varepsilon} + \frac{e_2\times u^{\varepsilon}}{\varepsilon} + \nabla p^{\varepsilon} = 0 &&\mbox{in }\;  \mathbb{R}^2_h \times \mathbb{R}_+, \;\forall t \geq 0\\
        &\divv u^{\varepsilon} = 0 &&\mbox{in }\; \mathbb{R}^2_h \times \mathbb{R}_+, \;\forall t \geq 0\\
        &u^\varepsilon|_{x_3=0} = 0 &&\mbox{on }\; \mathbb{R}^2_h\\
        &u^{\varepsilon}|_{t=0}= u^\varepsilon_0, &&\mbox{in }\;  \mathbb{R}^2_h \times \mathbb{R}_+\, .
    \end{aligned}
    \right.
\end{displaymath}
where $e_2 = (0,1,0)$ is the unit horizontal vector, $\nu>0$ the coefficient of viscosity of fluids and $\varepsilon$ the Rossby number. These equations describe the evolution of an incompressible three-dimensional viscous fluid in a rotating frame, $\frac{e_2\times u^{\varepsilon}}{\varepsilon}$ being the Coriolis force due to the rotation at high frequency $\varepsilon^{-1}$. According to the Taylor-Proudman theorem \cite{taylor}, the fast rotation penalize the movement of the fluid in the direction of the rotation axis. As a consequence, the fluid has tendency to move in columns, parallel to the rotation axis, which are widely known as the Taylor columns. This phenomenon is   well-known in oceanography and meteorology, which is observed in many large-scale atmospheric and oceanic flows. In mathematical point of view, when $\varepsilon$ goes to zero, the rotation term $\frac{e_2\times u^{\varepsilon}}{\varepsilon}$ becomes large and can only be balanced by the pressure. This means that, if $u$ is the (formal) limit of $u^\varepsilon$, as $\varepsilon \rightarrow 0$, then $e_2 \times u$ need to be a gradient term, which implies that $u$ is independent of $x_2$ (more explanations will be found in Section \ref{section2}). In this paper, we will consider the case where the initial data are well prepared, \emph{i.e.} $u^\varepsilon_0$ do not depend on $x_2$.

   When there is no Coriolis force, the zero-viscosity limit for the Navier-Stokes equations for incompressible fluids in a domain with boundary, with non-slip boundary conditions, is a challenging problem due to the formation of a boundary layer which is governed by the Prandtl equations (\cite{oleinik}). The mathematical analysis theory of Prandtl equation is also a challenging problem, see \cite{awxy, e-1,e-2,GV-D,M-W} and references therein.  Far from the boundary,  the inviscid  limit problem was treated by several authors; we can refer, for instance, to Swann \cite{swann} and Kato \cite{kato-2}. In another work, Kato \cite{kato-1} gives some equivalent formulations of this problem in the case of bounded domains, showing that the convergence to the Euler system is equivalent to the fact that the $L^2$ strength of the boundary layer goes to $0$. Caflisch \& Sammartino \cite{samm} solved the problem for analytic solutions on a half space by solving the Prandtl equations via abstract Cauchy-Kowaleskaya theorem. We also refer to \cite{GMM, GN, maekawa}  and the references therein for the recent progress  on the inviscid limit of the Navier-Stokes equations when the initial vorticity is located away from the boundary.   On the other hand,  another commonly used boundary conditions are  Navier-type slip boundary conditions,  in which case
   the vanishing viscosity limit is rigorously justified; cf.   \cite{Lions, wangwangxin, XiaoXin, Yu} and references therein.

We want to say a few words to compare the system \eqref{NSCeps} with the case where the rotation axis is vertical with respect to the boundary (the rotation axis is in the direction of $e_3 = (0,0,1)$ instead of $e_2$). If the domain considered is between two parallel plates ($\TT^2\times [0,1]$ or $\RR^2 \times [0,1]$), it was proved in Grenier \& Masmoudi \cite{grenier-masmoudi}, Masmoudi \cite{masmoudi-2, masmoudi-3} and Chemin \emph{et al.} \cite{CDGG3} that for the rotating fluids with anisotropic  viscosity $-\nu \Delta_h-\eps \partial_{x_3}^2$,  all the weak solutions of Navier-Stokes equation converge to the solution of the  2D Euler or 2D Navier-Stokes  system (with damping term - effect of the Ekman pumping). The vertical rotation and the specific form of the domain (between two parallel plates) permit to explicitly construct the boundary layer velocity term from the interior velocity term (which satisfies a 2D damped Euler system), without using the Prandtl equations.  We also want to mention the work of Dalibard and G\'erard-Varet 
\cite{DaGV} in the case of fast rotating fluids on a rough domain with 
non-slip boundary conditions. The boundary layer is also proved to be of size $\varepsilon$ (contrary to the case of Prandtl equations where the boundary layer is of size $\sqrt{\varepsilon}$). We also refer to a series of work for the rotating fluids with anisotopic viscosity (see for exemple \cite{CDGG}, \cite{CDGG2}, \cite{IG1}, \cite{IGLSR}, \cite{gongguowang},\cite{Iftimie3}, \cite{VSN}, \cite{Paicu1}).

We want to emphasize that the formation of the boundary layers in the case of vertical rotation axis is due to the incompatibility of the Dirichlet boundary conditions with the columnar movement of the limit fluid (as $\varepsilon \rightarrow 0$). Indeed, as the rotation axis is $e_3$, the limiting velocity of the fluid is independent of $x_3$, and so, the Dirichlet boundary conditions imply that the limit velocity should be zero. This incompatibility leads to the fact that a thin layer (Ekman's layer) is formed near the boundary, and the fluid's evolution is violent in this small scale zone, in a way that stops the fluid on the boundary.

In the case of horizontal rotation axis (in the direction of $e_2$), the incompatibility of boundary conditions  will be more complicated, because of the fact that the limit velocity is independent of $x_2$ instead of $x_3$. In Section \ref{section2}, we prove that the limit system is a 2D Euler-like system. This means that we are no longer in the case considered by Ekman. The techniques of \cite{grenier-masmoudi} and \cite{CDGG3} do not work and we can not explicitly calculate the boundary layer. The fast rotation only penalizes the fluid motion in the $x_2$ direction, and leads to a problem very close to the inviscid limit of two-dimensional Navier-Stokes system. It is then relevant to look for a boundary layer of size $\sqrt{\varepsilon}$ and we will show in Section \ref{section2} that in this boundary layer of size $\sqrt{\varepsilon}$, the fluid velocity actually satisfies a two-dimensional Prandtl-like system. Finally, we remark that in this paper, we only consider the case where $\nu=\varepsilon$. Indeed, as explained in \cite{grenier-masmoudi} and also in \cite{CDGGbook}, if the ratio $\nu/\varepsilon$ goes to infinity, the fluid rapidly stops after a few evolutions. It is then more interesting to consider the case where $\nu \lesssim \varepsilon$, which moreover better fits physical observations. 

In this work, we study the formation of the boundary layer when $\nu=\varepsilon \to 0$. We suppose the existence of a boundary layer of size $\sqrt{\varepsilon}$ near the boundary $\set{x_3=0}$ of $\RR^3_+$.  
We will derive the limit equation and the boundary layer equation by using a formal asymptotic expansion in the Section \ref{section2}. 
We refer to the book of Pedlovsky \cite{pedlovsky} for more detail about this formal expansion.
To this end, we suppose that the solution of \eqref{NSCeps} accepts the following asymtotic expansion
\begin{empheq}{align}
	\label{eq:AnsatzU}
	u^\varepsilon(t,x_1,x_2,x_3) &= \sum_{j=0}^1 \varepsilon^{\frac{j}2} \pint{u^{I,j}(t,x_1,x_2,x_3) + u^{B,j}\pare{t,x_1,x_2,\frac{x_3}{\sqrt{\varepsilon}}}} + \cdots,\\
	\label{eq:AnsatzP}
	p^\varepsilon(t,x_1,x_2,x_3) &= \sum_{j=-2}^1 \varepsilon^{\frac{j}2} p^{I,j}(t,x_1,x_2,x_3) + \sum_{j=-2}^0 \varepsilon^{\frac{j}2} p^{B,j}\pare{t,x_1,x_2,\frac{x_3}{\sqrt{\varepsilon}}}  +\cdots,
\end{empheq}
where $u^{B,j}(t,x_1,x_2,y)$ and $p^{B,j}(t,x_1,x_2,y)$ exponentially go to zero as $y\stackrel{\rm def}{=}\frac{x_3}{\sqrt{\varepsilon}} \to +\infty$. The remaining terms is supposed to be very small (at least of order 3).

Throughout this paper, we will always use $\dd_t$, $\dd_i$ (or $\dd_{x_i}$), $i=1,2,3$, and $\dd_y$ to respectively denote the derivatives with respect to the time variable $t$, the space variables $x_i$, $i=1,2,3$, and the boundary layer variable $y = \frac{x_3}{\sqrt{\varepsilon}}$. Using the above asymptotic expansion, we first deduce that  the behavior of the fluid near the boundary  is governed by the following 2D Prandtl-like equation
  \begin{empheq}{equation}
    \label{eq:PrRotE2} \tag{P1}
    \left\{
    \begin{aligned}
    	&\partial_t \Ucal^{p,0}_1 - \partial_y^2 \Ucal^{p,0}_1 + \Ucal^{p,0}_1 \partial_1 \Ucal^{p,0}_1 + \Ucal^{p,1}_3 \partial_y \Ucal^{p,0}_1 + \partial_1 p^{B,0} + \overline{\dd_1 p^{I,0}} = 0,\\
    	&\partial_t \Ucal^{p,1}_3 - \partial_y^2 \Ucal^{p,1}_3 + \Ucal^{p,0}_1 \partial_1 \Ucal^{p,1}_3 + \Ucal^{p,1}_3 \partial_y \Ucal^{p,1}_3 + \overline{\partial_3 p^{I,1}} + y\overline{\partial_3^2 p^{I,0}} = 0,\\
    	&\partial_1 \Ucal^{p,0}_1 + \partial_y \Ucal^{p,1}_3 = 0,\\
    	&\Ucal^{p, 0}_1|_{y=0} =0,\quad \lim_{y\to +\infty} \Ucal^{p,0}_1(t,x_1,y)=\overline{u^{I,0}_1},\\
	&\Ucal^{p, 1}_3|_{y=0} =0,\quad \partial_y \Ucal^{p,1}_3 |_{y=0}= 0,\\    	
	& ( \Ucal^{p,0}_1,   \Ucal^{p,1}_3)|_{t=0} =  ( \Ucal^{p,0}_{1, 0},   \Ucal^{p,1}_{3, 0}),
    \end{aligned}
	\right.
\end{empheq}
with the unknown functions $\inner{\Ucal^{p,0}_1, \Ucal^{p,1}_3, p^{B,0}},$  and the horizontal second component satisfies   a  parabolic type equation
\begin{empheq}{equation}
    \label{eq:PrRotE2-b} \tag{\ref{eq:P2}}
    \left\{
    \begin{aligned}
    	&\partial_t \Ucal^{p,0}_2 - \partial_y^2 \Ucal^{p,0}_2 + \Ucal^{p,0}_1 \partial_1 \Ucal^{p,0}_2 + \Ucal^{p,1}_3 \partial_y \Ucal^{p,0}_2 = 0\\
    	&\Ucal^{p, 0}_2|_{y=0} =0,\quad \lim_{y\to +\infty} \Ucal^{p,0}_2(t,x_1,y)\big) =\overline{u^{I,0}_2}\\
    	& \Ucal^{p, 0}_2 |_{t=0} =  \Ucal^{p, 0}_{2, 0}.
    \end{aligned}
	\right.
\end{empheq}
Here
$$
\partial_2 \Ucal^{p,0}_1 = \partial_2 \Ucal^{p,0}_2 = \partial_2 \Ucal^{p,1}_3 = 0.
$$
Here, we emphasize the ``Prandtl-like'' property of our system by using the new unknown functions 
\begin{align*}
	&\Ucal^{p,0}_j= u^{B,0}_j + \overline{u^{I,0}_j}, \qquad j=1, 2\\
	&\Ucal^{p,1}_3 = u^{B,1}_3 + \overline{u^{I,1}_3} + y \overline{\partial_3 u^{I,0}_3}
\end{align*}
where $\overline{u^{I,j}},  \overline{p^{I,j}}, j=1, 2 $ are the values on the boundary of the tangential velocity and
pressure of the outflow satisfying the Bernoulli-type law
$$
    \left\{
    \begin{aligned}
		&\partial_t \overline{u^{I,0}_1}+\overline{u^{I,0}_1}\partial_1\overline{u^{I,0}_1}+ \overline{\partial_1p^{I,0}}=0\\
		&\partial_t \overline{u^{I,0}_2}+\overline{u^{I,0}_1}\partial_1\overline{u^{I,0}_2}+ \overline{\partial_2p^{I,0}}=0\\
		&\partial_t \overline{u^{I,1}_3}+\overline{u^{I,0}_1}\partial_1\overline{u^{I,1}_3}+\overline{u^{I,1}_3}\partial_3\overline{u^{I,0}_3}+ \overline{\partial_3p^{I,1}}=0
  \end{aligned}
	\right.
$$
which is the restriction of the Euler system and linearized Euler system on the boundary $x_3=0$, so that they depend only on the variavles $(t, x_1)$. More precise description will be found in Section \ref{section2}.

Note that the boundary layer equation  \eqref{eq:PrRotE2} look very close to that of classical 2D Prandtl equation, but the fast rotating produces the boundary layer pressure for the first components, so that the boundary layer equation \eqref{eq:PrRotE2} is now really a system of 3 equations with both the velocity $ ( \Ucal^{p,0}_1,   \Ucal^{p,1}_3)$ and the boundary pressure $p^{B, 0}$ to determined.  We remark that on one side, the first equation in  \eqref{eq:PrRotE2} admits the similar structure of Prandtl equation, i.e.,   the degeneracy in $x_1$ coupled with the nonlocal property  arising from the term $\Ucal^{p,1}_3 \partial_y \Ucal^{p,0}_1$,  so that the system \eqref{eq:PrRotE2} is quite similar  to Prandtl equation.  Therefore  we can only expect the local well-posedness  for  analytic  initial data if no additional assumptions are imposed.   On the other hand,  there is a crucial difference between  Prandtl equation and   the first equation in \eqref{eq:PrRotE2},   due to  the unknown pressure  $p^{B,0}.$   Recall the pressure term  in Prandtl equation  is from outflow and  can be defined  by the Bernoulli law,  so that the pressure therein is a given function and therefore  Prandtl equation is a kind of degenerate parabolic equation.  But here the situation is quite complicated since we have the unknown pressure $p^{B,0}$  in  \eqref{eq:PrRotE2},  which arises because of the fast rotation  parallel to the boundary,  and  can't be defined by   the Bernoulli law    anymore.  So the classical theory for Prandtl equation is not applicable directly to  our case  and moreover we can't follow the same strategy as in Prandtl equation to treat the the first equation in \eqref{eq:PrRotE2}.   To overcome the difficulty  due to the unknown pressure term in the first equation of \eqref{eq:PrRotE2}, we will  firstly solve the second equation for $\Ucal^{p,1}_3$, and then use the divergence-free property to find $\Ucal^{p,0}_1$ (see Section \ref{section3} for detail).  Finally we mention that the mathematical justification of the inviscid limit for solutions to \eqref{NSCeps}, is also complicated as classical Prandtl boundary  layer theory.     We only concentrate in this work on the well-posedness of boundary layer  and  will investigate this inviscid limit problem in the future work.

On the other hand, we will prove in Section \ref{section2} that the limiting velocity of the outer flow satisfies a   classical 2D Euler-type equation, which is,
\begin{equation}  
	\label{eq:Ordre0Ibis}
    \left\{
    \begin{aligned}
    	&\partial_t u^{I,0}_1 + u^{I,0}_1 \partial_1 u^{I,0}_1 + u^{I,0}_3 \partial_3 u^{I,0}_1 + \partial_1 p^{I,0} = 0\\
    	&\partial_t u^{I,0}_2 + u^{I,0}_1 \partial_1 u^{I,0}_2 + u^{I,0}_3 \partial_3 u^{I,0}_2 = 0\\
    	&\partial_t u^{I,0}_3 + u^{I,0}_1 \partial_1 u^{I,0}_3 + u^{I,0}_3 \partial_3 u^{I,0}_3 + \partial_3 p^{I,0} = 0\\
    	&\partial_2 u^{I,0}_1 = \partial_2 u^{I,0}_2 = \partial_2 u^{I,0}_3 = \dd_2 p^{I,0} = 0\\
    	&\partial_1 u^{I,0}_1 + \partial_3 u^{I,0}_3 = 0\\
    	&u^{I,0}_3|_{x_3=0} = 0\\
        &u^{I,0} |_{t=0}= u^{I,0}_0(x_1,x_3).
    \end{aligned}
    \right.
\end{equation}

In the system \reff{eq:Ordre0Ibis}, the components  $(u^{I, 0}_1, u^{I, 0}_3, p^{I, 0})$ satisfy exactly a 2-D incompressible Euler equation
on the half-plane, so that the existence and regularity in Gevery class of local in time solution is well know, (see Vicol \cite{KV} and references therein), but in the study of boundary layer equation, we need  some weighted on the tangential variables,  we cite in particular the results of \cite{cheng-li}.

\begin{defn}
Let $\frac{1}{2}<\ell\leq 1$ be given. 	We denote by  $\mathcal A_\tau$  the space of analytic functions with analytic radius  $\tau>0$, which is consist of      all functions $f\in  L^2(\RR^2_+)$ such that 
	\begin{equation*}
		\norm{f}_{\mathcal A_\tau}\stackrel{\rm def}{=} \sup_{\abs\alpha\geq 0} \frac{\tau^{\abs\alpha}}{\abs{\alpha}!}\norm{\comii {z}^\ell \partial_{z}^\alpha f}_{L^2(\mathbb R_+^2)} < +\infty.
	\end{equation*} 
\end{defn}

\begin{thm}[\cite{cheng-li}]
	\label{th:E2D}  Suppose that the initial data $u^{I,0}_0=(u^{I,0}_{1,0},u^{I,0}_{2,0}, u^{I,0}_{3,0})$ in \reff{eq:Ordre0Ibis} satisfies  
	\begin{eqnarray*}
		u^{I,0}_{1,0},\; u^{I,0}_{2,0},\; u^{I,0}_{3,0} \in \mathcal A_{\tau_0}	\end{eqnarray*}
	for some $\tau_0>0$,  the divergence-free condition and the compatibility condition.  Then  Euler-type system \reff{eq:Ordre0Ibis} admits a unique solution $(u^{I,0}_1, u^{I,0}_2, u^{I,0}_3)\in L^\infty\inner{[0,T];~\mathcal A_\tau}$ for some  $T>0$ and $\tau>0.$  
\end{thm}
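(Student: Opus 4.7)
The plan is to reduce the system to two decoupled problems and attack each by an analytic energy method with a time–shrinking radius. First, because $\partial_2$ of every component vanishes and $\partial_2 p^{I,0}=0$, the first, third, and fifth equations of \reff{eq:Ordre0Ibis} together with $u^{I,0}_3|_{x_3=0}=0$ form a closed 2D incompressible Euler system on the half-plane $\RR^2_+$ in the variables $(x_1,x_3)$ for $(u^{I,0}_1,u^{I,0}_3,p^{I,0})$. The second equation then becomes a linear transport equation for $u^{I,0}_2$ whose coefficients are the already-constructed $(u^{I,0}_1,u^{I,0}_3)$. So the proof splits into (i) analytic local well-posedness of 2D Euler on $\RR^2_+$ in the weighted class $\mathcal{A}_\tau$, and (ii) propagation of the $\mathcal{A}_\tau$ regularity along the transport.

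For step (i), the most convenient formulation is the vorticity equation $\partial_t\omega+u^{I,0}_1\partial_1\omega+u^{I,0}_3\partial_3\omega=0$, where $\omega=\partial_3 u^{I,0}_1-\partial_1 u^{I,0}_3$, coupled with the Biot–Savart law on the half-plane (with the odd reflection of $\omega$ across $\{x_3=0\}$ encoding the no-penetration condition $u^{I,0}_3|_{x_3=0}=0$). I would set $\tau(t)=\tau_0-Kt$ for a large constant $K$ to be chosen, and work with the family of norms $\|f\|_{\mathcal{A}_{\tau(t)}}$. Differentiating the vorticity equation $\alpha$ times in $z=(x_1,x_3)$, multiplying by $\langle z\rangle^{2\ell}\partial^\alpha\omega$, and summing with the analytic weights $\tau^{|\alpha|}/|\alpha|!$, the convection terms are handled by a Cauchy-product combinatorial estimate of the form
\begin{equation*}
\sum_{\beta\leq\alpha}\binom{\alpha}{\beta}\frac{\tau^{|\alpha|}}{|\alpha|!}\|\partial^\beta u\,\partial^{\alpha-\beta}\nabla\omega\|\lesssim \|u\|_{\mathcal{A}_\tau}\|\nabla\omega\|_{\mathcal{A}_\tau}+\|\nabla u\|_{\mathcal{A}_\tau}\|\omega\|_{\mathcal{A}_\tau}.
\end{equation*}
The loss of one tangential derivative in $\|\nabla u\|_{\mathcal{A}_\tau}$ is absorbed by the negative term $\dot\tau \sum_\alpha |\alpha|\,\tau^{|\alpha|-1}/|\alpha|!\,\|\langle z\rangle^\ell\partial^\alpha\omega\|^2$ arising from the time derivative of $\tau(t)$, provided $K$ is chosen larger than a fixed multiple of the current analytic norm—a Cauchy–Kowalevski type argument. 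This yields, on a time interval $[0,T]$ with $T<\tau_0/K$, a unique solution $\omega\in L^\infty([0,T];\mathcal{A}_{\tau(t)})$; uniqueness follows from a parallel estimate on the difference of two solutions in a slightly smaller analytic class.

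For step (ii), once $(u^{I,0}_1,u^{I,0}_3)$ is known in $L^\infty([0,T];\mathcal{A}_\tau)$, the equation $\partial_t u^{I,0}_2+u^{I,0}_1\partial_1 u^{I,0}_2+u^{I,0}_3\partial_3 u^{I,0}_2=0$ is a linear transport equation; I would estimate it in the same $\mathcal{A}_{\tau(t)}$ norm by the identical Cauchy-product mechanism, treating $(u^{I,0}_1,u^{I,0}_3)$ as coefficients. Possibly one has to enlarge $K$ to accommodate $u^{I,0}_2$ as well, but this only shortens $T$ mildly. Alternatively, one could follow the characteristics of the divergence-free field $(u^{I,0}_1,u^{I,0}_3)$, which are globally defined and analytic in the data, and write $u^{I,0}_2(t,z)=u^{I,0}_{2,0}(\Phi_t^{-1}(z))$, then verify the $\mathcal{A}_\tau$ bound from the analyticity of the flow map.

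The main obstacle I anticipate is the control of the Biot–Savart map on the half-plane in the weighted analytic class $\mathcal{A}_\tau$: one must show that the weight $\langle z\rangle^\ell$ with $1/2<\ell\leq 1$ is preserved under the convolution with the half-plane Green's kernel so that $\|u\|_{\mathcal{A}_\tau}$ is controlled by $\|\omega\|_{\mathcal{A}_\tau}$ up to a derivative loss that the analytic framework can absorb. The constraint $\ell>1/2$ is exactly what guarantees that $\langle z\rangle^{-\ell}\in L^2(\RR^2_+)$-type estimates close for the reflected kernel; the upper bound $\ell\leq 1$ avoids unnecessary growth obstructions from the linear part of the kernel. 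Once this weighted Biot–Savart estimate is in place, the rest is a standard analytic-energy/Cauchy–Kowalevski argument, and the conclusion follows exactly as stated.
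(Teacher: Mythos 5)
Your decomposition is exactly the one the paper relies on: it observes that $(u^{I,0}_1,u^{I,0}_3,p^{I,0})$ form a closed 2D incompressible Euler system on the half-plane whose local well-posedness in the weighted analytic class $\mathcal A_\tau$ is taken directly from \cite{cheng-li}, while $u^{I,0}_2$ is recovered from the linear transport equation with those analytic coefficients. Your vorticity/Cauchy--Kowalevski sketch with shrinking radius $\tau(t)$ is a plausible reconstruction of that cited result, and the weighted half-plane Biot--Savart estimate you single out as the ``main obstacle'' (together with analyticity up to the boundary in the normal variable) is precisely the technical content the paper outsources to \cite{cheng-li} rather than proving itself.
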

\noindent The construction of the components $(u^{I, 0}_1, u^{I, 0}_3, p^{I, 0})$ is given in \cite{cheng-li}. The construction of $u^{I,0}_2$ is standard, using the classical theory of transport equation.

\bigskip

Now we list several estimates, which are just immediate consequences of the definition of $\norm{\cdot}_{\mathcal A_{\tau}}$ and Sobolev inequalities.   
For  $u^{I,0}_1\in L^\infty\inner{[0,T];~\mathcal A_\tau},$ we have, for all $p,q\geq 0,$
\begin{eqnarray}
	\label{eseuler}
	\norm{\comii{x_1}^\ell\partial_1^{p}\partial_3^q u^{I,0}_3(x_1,x_3)}_{L^\infty\inner{\mathbb R_+;~L^2(\mathbb R_{x_1})}}\leq C \norm{u^{I,0}_3}_{\mathcal A_\tau} \frac{(p+q+3)!}{  \tau^{p+q+3}}.
\end{eqnarray}
Using the equation 
\begin{eqnarray*}
	\partial_t u^{I,0}_1 + u^{I,0}_1 \partial_1 u^{I,0}_1 + u^{I,0}_3 \partial_3 u^{I,0}_1 + \partial_1 p^{I,0}=0, 
\end{eqnarray*}
we can calculate, by virtue of Leibniz formula, 
\begin{eqnarray}\label{dt}
&&\norm{\comii{x_1}^\ell \partial_t\partial_1^{p}\partial_3^q u^{I,0}_3}_{L^\infty\inner{\mathbb R_+;~L^2(\mathbb R_{x_1})}}\qquad\qquad\\
&&\leq C_\tau \inner{ \norm{u^{I,0}_1}_{\mathcal A_\tau}^2+\norm{u^{I,0}_1}_{\mathcal A_\tau}\norm{u^{I,0}_3}_{\mathcal A_\tau}+\norm{p^{I,0}}_{\mathcal A_\tau}} \frac{2^{p+q} (p+q)!}{\tau^{p+q}}\,.\nonumber
  	\end{eqnarray}

In order to completely give the solutions of the systems \eqref{eq:PrRotE2} and \eqref{eq:PrRotE2-b}, we also need the following linearized Euler system, which describes the evolution of the fluids in the interior part of the domain, far from the boundary, at the order $\sqrt{\varepsilon}$.

\begin{empheq}{equation} 
    \label{eq:Ordre0.5I}
    \left\{
    \begin{aligned}
		&\partial_t u^{I,1}_1 + u^{I,0}_1 \partial_1 u^{I,1}_1 + u^{I,0}_3 \partial_3 u^{I,1}_1 + u^{I,1}_1 \partial_1 u^{I,0}_1 + u^{I,1}_3 \partial_3 u^{I,0}_1 + \dd_1 p^{I,1} = 0\\
		&\partial_t u^{I,1}_2 + u^{I,0}_1 \partial_1 u^{I,1}_2 + u^{I,0}_3 \partial_3 u^{I,1}_2 + u^{I,1}_1 \partial_1 u^{I,0}_2 + u^{I,1}_3 \partial_3 u^{I,0}_2 = 0\\
		&\partial_t u^{I,1}_3 + u^{I,0}_1 \partial_1 u^{I,1}_3 + u^{I,0}_3 \partial_3 u^{I,1}_3 + u^{I,1}_1 \partial_1 u^{I,0}_3 + u^{I,1}_3 \partial_3 u^{I,0}_3 + \dd_3 p^{I,1} = 0\\
    	&\partial_2 u^{I,1}_1 = \partial_2 u^{I,1}_2 = \partial_2 u^{I,1}_3 = \dd_2 p^{I,1} = 0\\
    	&\partial_1 u^{I,1}_1 + \partial_3 u^{I,1}_3 = 0\\
        &u^{I,1}_3|_{x_3=0}= -u^{B,1}_3(t,x_1,0)\\
    	&u^{I,1}|_{t=0} = u^{I,1}_0(x_1,x_3).
    \end{aligned}
    \right.
\end{empheq}

For this linearized Euler system \reff{eq:Ordre0.5I}, we have 
 
\begin{thm}
	\label{th:E2Dlin} Let $\ell>1/2$, $\tau_0>0$ and $u^{B,1}_3(t,x_1,0)$ a given function such that
	$$\sum_{m\leq 2 } \norm{\comii{x_1}^\ell  \partial_1^m u^{B,1}_3(t,x_1,0)}_{L^2(\mathbb R_{x_1})}^2 + \sum_{ m\geq 3 } \com{\frac{\tau_0^{m-1}}{ (m-3)!}}^2\norm{\comii{x_1}^\ell  \partial_1^m u^{B,1}_3(t,x_1,0)}_{L^2(\mathbb R_{x_1})}^2 <+\infty.$$ 
	Suppose that the initial data $u^{I,1}_0=(u^{I,1}_{1,0},u^{I,1}_{2,0}, u^{I,1}_{3,0})$ in \reff{eq:Ordre0.5I} satisfies  the divergence-free condition, the compatibility condition and
	\begin{eqnarray*}
		u^{I,1}_{1,0},\;u^{I,1}_{2,0},\; u^{I,1}_{3,0}\in \mathcal A_{\tau_0}.
	\end{eqnarray*}
	Then the  linearized Euler system \reff{eq:Ordre0.5I} admits a unique solution $(u^{I,1}_1, u^{I,1}_2, u^{I,1}_3)\in L^\infty\inner{[0,T];~\mathcal A_\tau}$ for some  $T>0$ and $\tau>0.$ 
\end{thm}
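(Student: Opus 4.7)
Since Theorem \ref{th:E2D} provides the coefficient $u^{I,0}$ in $L^\infty([0,T];\mathcal A_\tau)$, the system \reff{eq:Ordre0.5I} is a linear Cauchy problem with known analytic coefficients, and the plan is to reduce it to a homogeneous linear problem and close a weighted analytic energy estimate with shrinking radius, in the spirit of the Cauchy-Kovalevskaya scheme used by \cite{cheng-li} to prove Theorem \ref{th:E2D}. First, the equation for $u^{I,1}_2$ decouples: it is a linear transport equation with transport velocity $(u^{I,0}_1, u^{I,0}_3)$ and lower-order source $u^{I,1}_1 \partial_1 u^{I,0}_2 + u^{I,1}_3 \partial_3 u^{I,0}_2$, so once $(u^{I,1}_1, u^{I,1}_3)$ is obtained the component $u^{I,1}_2$ follows by the method of characteristics in the analytic class. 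The real work is concentrated in the coupled system for $(u^{I,1}_1, u^{I,1}_3, p^{I,1})$.

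The main new feature compared with \reff{eq:Ordre0Ibis} is the non-homogeneous boundary condition $u^{I,1}_3|_{x_3=0} = -u^{B,1}_3(t,x_1,0)$. To handle it I would construct a divergence-free, $x_2$-independent lifting $U^b(t,x_1,x_3)$ with $U^b_3|_{x_3=0} = -u^{B,1}_3(t,x_1,0)$ and $U^b \in L^\infty([0,T];\mathcal A_{\tau_0'})$ for some $\tau_0' \leq \tau_0$; a natural choice is $U^b = (\partial_3 \Psi, 0, -\partial_1 \Psi)$, where $\Psi$ is obtained from $u^{B,1}_3(t,\cdot,0)$ by integrating once in $x_1$ and then extending harmonically to $\RR^2_+$ via the Poisson kernel. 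Setting $v = u^{I,1} - U^b$ in the first and third components reduces the problem to a linear system with homogeneous boundary condition $v_3|_{x_3=0}=0$ and an analytic source $F = -\partial_t U^b - u^{I,0}\cdot\nabla U^b - U^b\cdot\nabla u^{I,0}$. The pressure $p^{I,1}$ is recovered from the Neumann Poisson problem $-\Delta p^{I,1} = \partial_i\partial_j(u^{I,0}_j u^{I,1}_i + u^{I,1}_j u^{I,0}_i) + \divv F$ on $\RR^2_+$, for which analytic elliptic estimates give $\norm{\nabla p^{I,1}(t)}_{\mathcal A_\tau} \lesssim \norm{u^{I,0}}_{\mathcal A_\tau}\norm{u^{I,1}}_{\mathcal A_\tau} + \norm{F}_{\mathcal A_\tau}$. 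Then I would perform the weighted analytic energy estimate with shrinking radius $\tau(t) = \tau_0' - Mt$: after applying $\partial_z^\alpha$ to the equations for $v$, pairing in $L^2(\RR^2_+)$ against $\comii{x_1}^{2\ell}\partial_z^\alpha v$, and summing with the analytic weights $\tau(t)^{|\alpha|}/|\alpha|!$, the commutator terms where the top tangential derivative $\partial_1$ falls on $v$ produce a one-derivative loss, which is absorbed by the positive contribution $-\dot\tau(t)\sum_\alpha |\alpha|\tau(t)^{|\alpha|-1}/(|\alpha|-1)!\,\norm{\comii{x_1}^\ell\partial_z^\alpha v}_{L^2}^2$ generated by the shrinking radius, provided $M$ is chosen large enough in terms of $\norm{u^{I,0}}_{\mathcal A_\tau}$, $\norm{U^b}_{\mathcal A_{\tau_0'}}$ and the bounds \reff{eseuler}-\reff{dt}. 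A standard approximation scheme (Galerkin or mollification) then yields local existence on some $[0,T^*]$ with $T^* < \tau_0'/M$, and uniqueness follows from a routine linear energy estimate on the difference of two solutions.

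The main obstacle is the lifting step: producing $U^b$ whose \emph{time} derivative is itself controlled in the analytic class. Since the hypothesis on $u^{B,1}_3(\cdot,\cdot,0)$ makes no direct statement about time regularity, one must substitute for $\partial_t u^{B,1}_3(t,x_1,0)$ using the boundary trace of the equation governing $u^{B,1}_3$, which costs one tangential derivative; combined with the extra $x_1$-derivative needed to pass from the trace to $\Psi$, this explains the unusual weight $\tau_0^{m-1}/(m-3)!$ for $m\geq 3$ in the assumption, which supplies exactly the two extra analytic derivatives required to bound $\partial_t U^b$ in $\mathcal A_{\tau_0'}$. Once this lifting is in hand, the remainder of the argument is a linear, and hence simpler, version of the nonlinear scheme of \cite{cheng-li}.
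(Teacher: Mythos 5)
Your overall architecture (decoupling the $u^{I,1}_2$ transport equation, recovering $p^{I,1}$ from a Neumann problem, and closing a weighted analytic energy estimate with shrinking radius) is exactly what the paper intends: it gives no detailed proof of Theorem \ref{th:E2Dlin}, only the remark that by linearity one can follow the scheme of \cite{cheng-li} used for Theorem \ref{th:E2D}. The genuine gap is in the step you add to handle the inhomogeneous condition $u^{I,1}_3|_{x_3=0}=-u^{B,1}_3(t,x_1,0)$, namely the divergence-free lifting $U^b=(\partial_3\Psi,0,-\partial_1\Psi)$ built from the Poisson extension of the $x_1$-antiderivative of the trace. As constructed, $U^b$ does not land in the space where you need it: (i) its tangential component $\partial_3\Psi$ is the conjugate (Hilbert-transform type) extension of $u^{B,1}_3(t,\cdot,0)$, and such operators are not bounded on $L^2(\comii{x_1}^{2\ell}dx_1)$ when $\ell>1/2$, since the weight is then not an $A_2$ weight, whereas the $\mathcal A_\tau$-norm requires precisely this weighted control; (ii) the Poisson extension of a generic $L^2(\RR_{x_1})$ function is not square integrable on $\RR^2_+$ (one has $\int_0^\infty\norm{P_{x_3}*f}_{L^2}^2dx_3=\tfrac12\int|\hat f(\xi)|^2|\xi|^{-1}d\xi$, which may diverge), and if you repair this by cutting off in $x_3$ while keeping $\divv U^b=0$ through the stream function, the term $\chi'(x_3)\Psi$ reintroduces the antiderivative $\int_{-\infty}^{x_1}u^{B,1}_3(t,z,0)dz$, which does not decay in $x_1$, so the $\comii{x_1}^\ell$-weighted $L^2$ bound fails again. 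So the specific lifting, as written, would not go through without further ideas.

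The second weak point is the control of $\partial_t U^b$. Inside Theorem \ref{th:E2Dlin} the boundary datum is an arbitrary given function satisfying only the stated spatial bound, so ``the equation governing $u^{B,1}_3$'' is not available there; and even in the application (where $u^{B,1}_3$ solves \eqref{linsys}), the trace of that equation brings in $\partial_y^2u^{B,1}_3|_{y=0}$, which is not controlled by the stated hypothesis. Your interpretation of the weight $\tau_0^{m-1}/(m-3)!$ as supplying ``two extra derivatives'' for this purpose is also a misreading: it is simply the trace, at $y=0$, of the $X_{\rho,a}$-norm of Definition \ref{xrho}, i.e.\ the natural regularity inherited from $u^{B,1}_3\in L^\infty([0,T];X_{\rho,a})$ produced by Theorem \ref{th41}. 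A route closer to the paper's intention avoids the lifting altogether: keep the inhomogeneous trace in the boundary terms of the analytic energy estimates and in the Neumann data of the pressure problem, exactly as the boundary contribution $\overline{\partial_1 u^{I,0}_1}$ is carried through Lemma \ref{lowleft} for the boundary-layer system, which is where the time regularity of the datum must be either assumed or supplied by the equation it satisfies in the application.
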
 
We remark that,   the compatibility condition ask
$$
u^{I,1}_{3,0} (x_1, 0)=-u^{B,1}_{3,0}(x_1, 0).
$$
It is exactly the non-slip condition 	of  \eqref{NSCeps} at order  $1$. 
Because of its linearity, treating the system \eqref{eq:Ordre0.5I} is still much easier than treating the system \eqref{eq:Ordre0Ibis}, even with the presence of the given boundary function $u^{B,1}_3(t,x_1,0)$. So, to prove Theorem \ref{th:E2Dlin}, we can simply follow the lines of the proof of Theorem \ref{th:E2D} as in \cite{cheng-li}.

Before giving the well-posedness results on \eqref{eq:PrRotE2} and \eqref{eq:PrRotE2-b}, 
  we need the following  weighted   analytic function spaces in tangential variable.    We also remark that there is no coupling between $(\Ucal^{p,0}_1,\Ucal^{p,1}_3)$ and $\Ucal^{p,0}_2$. Then, the strategy   consists in separately  solving the systems \eqref{eq:PrRotE2} and \eqref{eq:PrRotE2-b}.

  \begin{defn}\label{xrho} Let $1/2<\ell\leq 1$ be given throughout the paper.  
With each pair $(\rho, a) $ with $\rho>0$ and  $a>0$  we associate   
 a space $X_{\rho,a} $ of all functions $u(x_1,y)\in H^\infty(\mathbb R_{x_1};~H^2(\mathbb R_+))$ such that  
\begin{eqnarray*}
\sum_{m\leq 2 }  \inner{\sum_{0\leq j\leq 1}  \norm{\comii{x_1}^\ell e^{a y^2}\partial_1^m\partial_y^j u}_{L^2(\mathbb R_+^2)}^2}+\sum_{ m\geq 3 } \inner{\sum_{0\leq j\leq 1} \com{\frac{\rho^{m-1}}{ (m-3)!}}^2\norm{\comii{x_1}^\ell e^{a y^2}\partial_1^m\partial_y^j u}_{L^2(\mathbb R_+^2)}^2 }<+\infty,
\end{eqnarray*}
where we use the convention $0!=1$. 
  We endow  $X_{\rho,a}$ with the norm
\begin{eqnarray*}
 \abs{u}_{X_{\rho, a}}^2=\sum_{m\leq 2 }  \inner{\sum_{0\leq j\leq 1}  \norm{\comii{x_1}^\ell e^{a y^2}\partial_1^m\partial_y^j u}_{L^2(\mathbb R_+^2)}^2}+\sum_{ m\geq 3 } \inner{\sum_{0\leq j\leq 1} \com{\frac{\rho^{m-1}}{ (m-3)!}}^2\norm{\comii{x_1}^\ell e^{a y^2}\partial_1^m\partial_y^j u}_{L^2(\mathbb R_+^2)}^2 }.
\end{eqnarray*}
\end{defn}

The well-posedness of the system \eqref{eq:PrRotE2} 
can be stated as follows.

\begin{thm} 
\label{th:Prandtl}
Suppose that  the initial data
\begin{eqnarray*}
	\Ucal^{p,1}_{3,0} = u^{B,1}_{3,0} + \overline{u^{I,1}_{3,0}} + y \overline{\partial_3 u^{I,0}_{3,0}}
\end{eqnarray*}
 in  \reff{eq:PrRotE2} 
 satisfies that
\begin{eqnarray*}
u^{B,1}_{3,0}\in X_{\rho_0, a_0}, \quad u^{I,1}_{3,0}, \,\, u^{I,0}_{3,0}\in \mathcal A_{\tau_0}
\end{eqnarray*}
for some $a_0>0$, $\rho_0 >0$ and $\tau_0>0$ and 
$$
\Ucal^{p,0}_{1, 0}(x_1, y) =-\int_{-\infty}^{x_1} \partial_yu^{B,1}_{3, 0}(z, y)dz+ 
\overline{u^{I,0}_{1, 0}}(x_1).
$$  
Then there exist $T>0$, $\tau>0$   and a pair $\inner{\rho, a}$ with $\rho, a>0$, such that the system \eqref{eq:PrRotE2} admits a unique solution $(\Ucal^{p,0}_1,\Ucal^{p,1}_3, \partial_1p^{B, 0})$, and moreover  
\begin{eqnarray*}
	&&\Ucal^{p,1}_3 = u^{B,1}_3 + \overline{u^{I,1}_3} + y \overline{\partial_3 u^{I,0}_3}\\
	&&\Ucal^{p,0}_1(t,x_1,y) =-\int_{-\infty}^{x_1} \partial_yu^{B,1}_3(t,z,y)dz+ \overline{u^{I,0}_1}(t,x_1),
\end{eqnarray*}
with    $u^{B,1}_3\in L^\infty\inner{[0,T]; ~X_{\rho,a}}$ and  $u^{I,0}_1, u^{I,0}_3, u^{I,1}_3 \in L^\infty\inner{[0,T]; ~\mathcal A_\tau}.$
\end{thm}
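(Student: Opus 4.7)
The plan is to reduce the three-equation system \eqref{eq:PrRotE2} to a single nonlocal nonlinear parabolic equation for $u^{B,1}_3$. Combining the decomposition $\Ucal^{p,1}_3 = u^{B,1}_3 + \overline{u^{I,1}_3} + y\,\overline{\partial_3 u^{I,0}_3}$ with the incompressibility $\partial_1\Ucal^{p,0}_1+\partial_y\Ucal^{p,1}_3=0$ gives $\partial_1\Ucal^{p,0}_1 = -\partial_y u^{B,1}_3 - \overline{\partial_3 u^{I,0}_3}$. Since restricting $\partial_1u^{I,0}_1+\partial_3u^{I,0}_3=0$ to $x_3=0$ yields $\partial_1\overline{u^{I,0}_1} = -\overline{\partial_3 u^{I,0}_3}$, integration in $x_1$ from $-\infty$, using the decay of $u^{B,0}_1$, produces exactly the formula
\[
\Ucal^{p,0}_1(t,x_1,y) = -\int_{-\infty}^{x_1}\partial_y u^{B,1}_3(t,z,y)\,dz + \overline{u^{I,0}_1}(t,x_1)
\]
claimed in the theorem. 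Substituting this together with the decomposition of $\Ucal^{p,1}_3$ into the second equation of \eqref{eq:PrRotE2}, and rewriting the forcing $\overline{\partial_3 p^{I,1}}+y\,\overline{\partial_3^2 p^{I,0}}$ via the Bernoulli-type laws, yields a closed scalar equation for $u^{B,1}_3$ whose coupling with the outer flow is only through functions supplied by Theorems \ref{th:E2D} and \ref{th:E2Dlin}.

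The next step is to prove local existence for this reduced equation by an energy method in the weighted analytic space $X_{\rho(t),a}$ with a time-decreasing analyticity radius $\rho(t)=\rho_0-Kt$. For each $(m,j)$ with $j\leq 1$, I apply $\partial_1^m\partial_y^j$ to the equation, multiply by $\comii{x_1}^{2\ell}e^{2ay^2}\partial_1^m\partial_y^j u^{B,1}_3$, and integrate over $\RR^2_+$. Integration by parts in $y$ against the Gaussian weight $e^{2ay^2}$ yields the parabolic dissipation $\norm{\partial_y\partial_1^m\partial_y^j u^{B,1}_3}_{L^2}^2$ plus a coercive zeroth-order term generated by $\partial_y e^{2ay^2}=4aye^{2ay^2}$; the boundary contributions at $y=0$ either vanish (for $j=0$, from $u^{B,1}_3|_{y=0}=0$) or are reduced to controllable tangential traces by reading $\partial_y^2 u^{B,1}_3|_{y=0}$ off the equation (for $j=1$). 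Nonlinear transport terms are expanded by Leibniz and the resulting convolution sums $\sum_{k+l=m}\binom{m}{k}\frac{\rho^{k-1}}{(k-3)!}\frac{\rho^{l-1}}{(l-3)!}$ are handled by standard analytic combinatorics, while the interior-flow contributions are controlled via \reff{eseuler}--\reff{dt}.

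The essential difficulty is the one-derivative loss carried by the nonlocal term $\int_{-\infty}^{x_1}\partial_y u^{B,1}_3\,dz$: differentiating $m$ times in $x_1$ and pairing against $\partial_1^m u^{B,1}_3$ produces a factor of $\partial_1^m\partial_y u^{B,1}_3$ in $L^2$ for which no tangential smoothing is available. Following the Cauchy-Kowalevski strategy for Prandtl-type equations, this loss is absorbed in the monotonicity term $-\dot\rho(t)\,\partial_\rho|u^{B,1}_3(t)|_{X_{\rho,a}}^2$, which is nonnegative and dominates the bad contribution once $K$ is chosen sufficiently large in terms of the $\mathcal{A}_{\tau_0}$-norms of the data. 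Closing the resulting differential inequality gives existence on some $[0,T]$; uniqueness follows from the same estimate applied to the difference of two solutions, which satisfies a linear equation of identical structure. Finally, $\Ucal^{p,1}_3$ is reconstructed from its decomposition, $\Ucal^{p,0}_1$ from the integral representation above, and $\partial_1 p^{B,0}$ is obtained algebraically from the first equation of \eqref{eq:PrRotE2}.
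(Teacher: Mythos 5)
Your overall reduction is the same as the paper's (eliminate $\Ucal^{p,0}_1$ through the divergence-free relation, solve a scalar nonlocal equation for $u^{B,1}_3$, then recover $\partial_1 p^{B,0}$ algebraically), but the boundary-condition structure of the reduced problem is misidentified, and this both invalidates your energy estimate and hides a circularity. The reduced unknown does \emph{not} satisfy $u^{B,1}_3|_{y=0}=0$: the Dirichlet condition in \eqref{eq:PrRotE2} is imposed on $\Ucal^{p,1}_3$ and reads $u^{B,1}_3(t,x_1,0)=-\overline{u^{I,1}_3}(t,x_1)$, which is neither zero nor known in advance, because $u^{I,1}_3$ solves the linearized Euler system \eqref{eq:Ordre0.5I} whose own boundary datum is $-u^{B,1}_3(t,x_1,0)$; so treating $u^{I,1}_3$ as ``supplied by Theorem \ref{th:E2Dlin}'' is circular. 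The only condition genuinely available for the reduced problem is the Neumann one, $\partial_y u^{B,1}_3|_{y=0}=-\overline{\partial_3 u^{I,0}_3}$, coming from $\partial_y\Ucal^{p,1}_3|_{y=0}=0$. The paper closes the equation by replacing every occurrence of $\overline{u^{I,1}_3}$ by $-u^{B,1}_3(t,x_1,0)$ (this is exactly the system \eqref{linsys}, with the terms $u-u(t,x_1,0)$ and $\partial_1 u(t,x_1,0)$), solves that Neumann problem, and only \emph{afterwards} invokes Theorem \ref{th:E2Dlin} with boundary datum $-u^{B,1}_3(t,x_1,0)$ so that the Dirichlet condition \eqref{bounadry3} holds. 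With the correct boundary condition, the boundary terms in your $j=1$ estimate involve $\partial_y^2 u^{B,1}_3|_{y=0}$, which via the equation contains $\partial_t u^{B,1}_3(t,x_1,0)$ and cannot be bounded pointwise in time by the $X_{\rho,a}$ norm; the paper needs the exact-derivative (in $t$) manipulation of Lemma \ref{lowleft} to handle it, and your claim that the $j=0$ boundary terms simply vanish is unfounded.

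A second genuine gap is the absorption mechanism for the derivative loss. Because $\partial_y v=-\int_{-\infty}^{x_1}\partial_y^2 u\,dz$, the worst term in the estimate is of the form $C\,\abs{u}_{Z_{\rho,a}}\abs{u}_{Y_{\rho,a}}^2$ (see \eqref{cru1} and Lemma \ref{R8}), whose coefficient is the dissipation-order norm of the \emph{solution}, controlled only in $L^2$ in time. A linear radius $\rho(t)=\rho_0-Kt$ with $K$ chosen from the data yields $-\dot\rho\,\abs{u}_{Y_{\rho,a}}^2=K\abs{u}_{Y_{\rho,a}}^2$, which cannot dominate this term, and trading it against the dissipation leaves $\abs{u}_{Y_{\rho,a}}^4$, which is equally out of reach; this is precisely why the paper takes $\rho$ as the solution of the ODE $\rho'(t)=-\abs{u^\eps(t)}_{Z_{\rho,a}}$ in Proposition \ref{propapri} and then bounds $\int_0^{T}\abs{u^\eps}_{Z_{\rho,a}}\,dt$ by Cauchy--Schwarz to keep $\rho$ bounded below. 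Finally, an a priori estimate by itself does not produce a solution: the paper constructs approximate solutions through the tangential regularization $-\eps\partial_1^2$ in \eqref{linsys+} and passes to the limit by compactness, a step your proposal omits.
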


\begin{remark}
\begin{enumerate}[(i)]
\item Here we consider the well prepared initial data, that is the initial data are independent of $x_2.$ 
\item 	We want to remark that once we find $\Ucal^{p,1}_3$, we can obtain $\Ucal^{p,0}_1$ using the divergence-free property in the third equation of the system \eqref{eq:PrRotE2}. \end{enumerate}
\end{remark}

 Let $\Ucal^{p,0}_1,\Ucal^{p,1}_3$ be the solutions to the system \eqref{eq:PrRotE2} given by the theorem above. Then we see    \eqref{eq:PrRotE2-b} is a linear parabolic equation,  and we have the following theorem concerned with its well-posedness.
 
\begin{thm}
	\label{th:SystemP2} Let $\rho_0 > 0$, $a_0 > 0$, $\tau_0 > 0$ be given. For any initial data 
	$$
	\Ucal^{p,0}_{2,0} = u^{B,0}_{2,0} + \overline{u^{I,0}_{2,0}}
	$$ 
	where $u^{B,0}_{2,0} \in X_{\rho_0, a_0}$ and $u^{I,0}_{2,0}\in \mathcal A_{\tau_0}$, there exist $T>0$, $0<\tau<\tau_0$ and $0<a<a_0$, such that the equation \eqref{eq:PrRotE2-b} admits a unique solution $\Ucal^{p,0}_2$ satisfying $\Ucal^{p,0}_2 = u^{B,0}_2 + \overline{u^{I,0}_2}$ with 
	$$
	u^{B,0}_2 \in L^{\infty}\pare{[0,T], X_{\rho_0, a}},\quad u^{I,0}_2 \in L^{\infty}\pare{[0,T], \mathcal A_\tau}.
	$$ 
\end{thm}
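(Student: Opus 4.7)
The plan is to exploit the linearity of \eqref{eq:PrRotE2-b} once $(\Ucal^{p,0}_1, \Ucal^{p,1}_3)$ are regarded as known coefficients provided by Theorem \ref{th:Prandtl}, and to adapt the analytic energy machinery used for the nonlinear system to this simpler linear setting. First I would construct $\overline{u^{I,0}_2}$: the function $u^{I,0}_2$ satisfies a pure transport equation in \reff{eq:Ordre0Ibis} whose characteristics are driven by the analytic velocity $(u^{I,0}_1, u^{I,0}_3)$ produced by Theorem \ref{th:E2D}. Hence $u^{I,0}_2 \in L^\infty([0,T]; \mathcal A_\tau)$ by the standard theory of transport equations in analytic spaces, and its trace at $x_3=0$ yields the required boundary value $\overline{u^{I,0}_2}$, with analytic norms in $x_1$ controlled as in \eqref{eseuler}--\eqref{dt}.

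Next I would recast \eqref{eq:PrRotE2-b} as an equation for the boundary layer profile $u^{B,0}_2 \stackrel{\rm def}{=} \Ucal^{p,0}_2 - \overline{u^{I,0}_2}$. Subtracting the Bernoulli-type law satisfied by $\overline{u^{I,0}_2}$ gives
\begin{equation*}
\partial_t u^{B,0}_2 - \partial_y^2 u^{B,0}_2 + \Ucal^{p,0}_1 \partial_1 u^{B,0}_2 + \Ucal^{p,1}_3 \partial_y u^{B,0}_2 + u^{B,0}_1\, \partial_1 \overline{u^{I,0}_2} = 0,
\end{equation*}
together with the non-homogeneous Dirichlet condition $u^{B,0}_2|_{y=0} = -\overline{u^{I,0}_2}(t,x_1)$ and decay $u^{B,0}_2 \to 0$ as $y \to +\infty$. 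To obtain a homogeneous trace at $y=0$, which is needed before integration by parts against the weight $e^{ay^2}$, I would lift the boundary data by setting $w \stackrel{\rm def}{=} u^{B,0}_2 + \chi(y) \overline{u^{I,0}_2}(t,x_1)$ for a fixed smooth cut-off $\chi$ with $\chi(0)=1$ and rapid decay as $y \to +\infty$. Then $w$ satisfies a linear parabolic equation on $\{y>0\}$ with homogeneous Dirichlet data at $y=0$ and a forcing term that is analytic in $x_1$ and Gaussian-decaying in $y$.

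The core of the proof is then a weighted analytic energy estimate in $X_{\rho,a}$ with a time-dependent radius $\rho(t)$ and weight parameter $a(t)$. Applying $\partial_1^m \partial_y^j$ to the equation for $w$, multiplying by the $X_{\rho,a}$-rescaled version of $\partial_1^m \partial_y^j w$, and integrating over $\RR^2_+$, the diffusion $-\partial_y^2$ provides control of $\partial_y^{j+1}$ after the Gaussian weight has been absorbed by choosing $\dot a(t)<0$ appropriately. The one-derivative loss in $x_1$ created by $\Ucal^{p,0}_1 \partial_1 w$ is compensated exactly as in the analytic Prandtl theory by letting $\dot \rho(t)<0$, invoking the bound $\Ucal^{p,0}_1 \in L^\infty([0,T]; X_{\rho_*, a_*})$ from Theorem \ref{th:Prandtl} inside a standard Cauchy-product estimate. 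The convective term $\Ucal^{p,1}_3 \partial_y w$ is handled by the analogous Cauchy-product bound involving one $y$-derivative, and the remaining source terms (from the lift and from $u^{B,0}_1 \partial_1 \overline{u^{I,0}_2}$) are estimated directly by the previously established analytic norms.

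The main obstacle will be closing the weighted Cauchy-product estimates for the transport terms uniformly in $m$, keeping track of the distinct analytic radii at which the coefficients $(\Ucal^{p,0}_1,\Ucal^{p,1}_3)$ and the unknown $w$ are measured. By selecting $\rho(t)\le \rho_*$ and $a(t)\le a_*$ throughout, this reduces to standard combinatorial inequalities on the factorial weights $(m-3)!$ appearing in Definition \ref{xrho}, together with the factor-of-two trick from \eqref{dt} for low-order derivatives. Once a differential inequality of the form
\begin{equation*}
\frac{d}{dt}\,\abs{w}_{X_{\rho(t),a(t)}}^2 + \eta\!\!\sum_{m,j}\!\!\Big(\frac{\rho^{m-1}}{(m-3)!}\Big)^{\!2}\!\norm{\comii{x_1}^\ell e^{ay^2}\partial_1^m\partial_y^{j+1} w}_{L^2}^2 \le C\,\abs{w}_{X_{\rho(t),a(t)}}^2 + F(t)
\end{equation*}
is obtained with integrable $F$, local existence on a short interval $[0,T]$ follows from a linear Picard iteration, and uniqueness follows by applying the same estimate to the difference of two solutions. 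Returning to $\Ucal^{p,0}_2 = u^{B,0}_2 + \overline{u^{I,0}_2}$ yields the conclusion of the theorem.
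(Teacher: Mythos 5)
Your proposal is correct and follows essentially the same route as the paper: the paper likewise uses the transport law for $\overline{u^{I,0}_2}$ to reduce \eqref{eq:PrRotE2-b} to the linear parabolic system \eqref{eq:P2B} for $u^{B,0}_2$, lifts the non-homogeneous Dirichlet data by the explicit Gaussian cut-off $v=u^{B,0}_2+e^{-2a_0y^2}\,\overline{u^{I,0}_2}$ (your $\chi$ must indeed be taken with at least such Gaussian decay so the forcing stays in the $e^{ay^2}$-weighted spaces), and then observes that the resulting system has the same form as \eqref{linsys} with Dirichlet boundary conditions, so the weighted analytic energy machinery of Sections \ref{section3}--\ref{section4} applies with many simplifications. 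This is exactly your plan.
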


By the two theorems above  we obtain the well-posedness for the boundary layer equation of the system \eqref{NSCeps} in the frame of analytic space in tangential variable.

The paper is organized as follows. In section \ref{section2}, we formally derive the governing equations of the outer flow inside the domain and the systems \eqref{eq:PrRotE2} and \eqref{eq:PrRotE2-b} which describe the fluid motion inside the boundary layer. The sections  \ref{section3}-\ref{section4} are devoted to proving  the well-posedness of the system \eqref{eq:PrRotE2}. Finally, we give some brief ideas of the proof of Theorem \ref{th:SystemP2} for the  well-posedness of  equation  \eqref{eq:PrRotE2-b} in the section \ref{section5}.

 \section{Formal asymtotic expansion}\label{section2}

First of all, we want to give a few words to explain our special choice of the order of the expansions of the velocity and the pressure. Indeed, we remark that as for the formulation of Prandtl boundary layer equations, we are only interested in the leading orders which are necessary to allow us to formally obtain the governing equations of the evolution of the boundary layer. By using the asymptotic expansions \eqref{eq:AnsatzU} and \eqref{eq:AnsatzP}, we have the following asymptotic identities for the leading terms up to order $\varepsilon^{1/2}$ and all the remaining terms are of higher order in $\eps$.
\begin{empheq}{equation}
	\label{eqs:AnsatzSyst}
	\left\{
	\begin{aligned}
		\partial_t u^\varepsilon &\quad=\quad \sum_{j=0}^1 \varepsilon^{\frac{j}2} \pare{\partial_t u^{I,j} + \partial_t u^{B,j}} +\cdots\\
		-\varepsilon\Delta u^\varepsilon &\quad=\quad -\partial_y^2 u^{B,0} - \varepsilon^{\frac{1}2}\partial_y^2 u^{B,1} - \sum_{j=0}^1 \varepsilon^{1+\frac{j}2} \pare{\Delta u^{I,j} + \Delta_h u^{B,j}} +\cdots\\
		u^\varepsilon\cdot \nabla u^\varepsilon &\quad=\quad \sum_{j=0}^1 \varepsilon^{\frac{j-1}2} \pint{\sum_{k=0}^j \pare{u^{B,k}_3 + u^{I,k}_3} \partial_y u^{B,j-k}} + \sum_{j=0}^1 \varepsilon^{\frac{j}2} \pint{\sum_{k=0}^j \pare{u^{B,k}_h + u^{I,k}_h}\cdot \nabla_h u^{B,j-k}}\\
		&\qquad \qquad + \sum_{j=0}^1 \varepsilon^{\frac{j}2} \pint{\sum_{k=0}^j \pare{u^{B,k} + u^{I,k}}\cdot \nabla u^{I,j-k}} + \cdots\\
		\frac{e_2\times u^\varepsilon}{\varepsilon} &\quad=\quad \sum_{j=0}^1 \varepsilon^{\frac{j}2-1} \pint{\begin{pmatrix} u^{B,j}_3\\0\\-u^{B,j}_1 \end{pmatrix} + \begin{pmatrix} u^{I,j}_3\\0\\-u^{I,j}_1 \end{pmatrix}} + \cdots\\
		\nabla p^\varepsilon &\quad=\quad \varepsilon^{-\frac{3}2} \begin{pmatrix} 0\\0\\ \partial_y p^{B,-1} \end{pmatrix} + \sum_{j=-2}^{-1} \varepsilon^j \begin{pmatrix} \partial_1 p^{B,j}\\ \partial_2 p^{B,j}\\ \partial_y p^{B,j+1} \end{pmatrix} + \begin{pmatrix} \partial_1 p^{B,0}\\ \partial_2 p^{B,0}\\ 0 \end{pmatrix} + \sum_{j=-2}^1 \varepsilon^{\frac{j}2} \nabla p^{I,j} + \cdots.
	\end{aligned}
	\right.
\end{empheq}

\subsection{Formal derivation of the fluid behavior far from the boundary}

We put all the asymptotic identities \eqref{eqs:AnsatzSyst} into the system \eqref{NSCeps} and we deduce that
\begin{empheq}{equation}
	\label{eq:Interior} \sum_{j=0}^1 \varepsilon^{\frac{j}2} \partial_t u^{I,j} - \sum_{j=0}^1 \varepsilon^{1+\frac{j}2} \Delta u^{I,j} + \sum_{j=0}^1 \varepsilon^{\frac{j}2} \sum_{k=0}^j u^{I,k} \cdot \nabla u^{I,j-k} + \sum_{j=0}^1 \varepsilon^{\frac{j}2-1} \begin{pmatrix} u^{I,j}_3\\0\\-u^{I,j}_1 \end{pmatrix} + \sum_{j=-2}^1 \varepsilon^{\frac{j}2} \nabla p^{I,j} = 0(\varepsilon).
\end{empheq}
Taking the limit $y = \frac{x_3}{\sqrt{\eps}} \to +\infty$ ($\eps \to 0$), the divergence-free property writes 
\begin{empheq}{equation}  
	\label{eq:divI} 
	\divv u^{I,j} = 0, \qquad \forall \; j\geq 0.
\end{empheq}

\noindent
\textbf{At the leading term of $\varepsilon^{-1}$} in \eqref{eq:Interior}, we simply have
\begin{empheq}{equation}
    \label{eq:minus1I} \begin{pmatrix} u^{I,0}_3\\0\\-u^{I,0}_1 \end{pmatrix} + \begin{pmatrix} \partial_1 p^{I,-2}\\ \partial_2 p^{I,-2}\\ \partial_3 p^{I,-2} \end{pmatrix} = 0.
\end{empheq}
Then, classical calculations (see Grenier-Masmoudi \cite{grenier-masmoudi} or Chemin {\it et al.} \cite{CDGGbook}) give
\begin{empheq}{equation}
    \label{eq:minus1IuA}  \partial_2 p^{I,-2} = \partial_2 u^{I,0}_1 = \partial_2 u^{I,0}_2 = \partial_2 u^{I,0}_3 = 0.
\end{empheq}

\noindent
\textbf{At the order $\varepsilon^{-1/2}$} in \eqref{eq:Interior}, we have
\begin{empheq}{equation}
    \label{eq:minus0.5I} \begin{pmatrix} u^{I,1}_3\\0\\-u^{I,1}_1 \end{pmatrix} + \begin{pmatrix} \partial_1 p^{I,-1}\\ \partial_2 p^{I,-1}\\ \partial_3 p^{I,-1} \end{pmatrix} = 0,
\end{empheq}
which imply
\begin{empheq}{equation}
	\label{eq:minus0.5Ip} \partial_2 p^{I,-1} =  \partial_2 u^{I,1}_1 = \partial_2 u^{I,1}_2 = \partial_2 u^{I,1}_3 = 0.
\end{empheq}

\begin{rem}
	Identities \eqref{eq:minus1IuA} and \eqref{eq:minus0.5Ip} mean that the limit behaviour of the outer flow is two-dimensional, as predicts the Taylor-Proudman theorem.
\end{rem}

\noindent
\textbf{At the order $\varepsilon^{0}$} in \eqref{eq:Interior}, taking into account \eqref{eq:minus1IuA} and the divergence-free condition \eqref{eq:divI}, we obtain
\begin{empheq}{equation}
	\label{eq:Ordre0Iter}
    \left\{
    \begin{aligned}
    	&\partial_t u^{I,0}_1 + u^{I,0}_1 \partial_1 u^{I,0}_1 + u^{I,0}_3 \partial_3 u^{I,0}_1 + \partial_1 p^{I,0} = 0\\
    	&\partial_t u^{I,0}_2 + u^{I,0}_1 \partial_1 u^{I,0}_2 + u^{I,0}_3 \partial_3 u^{I,0}_2 + \partial_2 p^{I,0}= 0\\
    	&\partial_t u^{I,0}_3 + u^{I,0}_1 \partial_1 u^{I,0}_3 + u^{I,0}_3 \partial_3 u^{I,0}_3 + \partial_3 p^{I,0} = 0\\
    	&\partial_2 u^{I,0}_1 = \partial_2 u^{I,0}_2 = \partial_2 u^{I,0}_3 = 0\\
    	&\partial_1 u^{I,0}_1 + \partial_3 u^{I,0}_3 = 0
    \end{aligned}
    \right.
\end{empheq}
Now, by applying  $\dd_2$  to the second equation of the system \eqref{eq:Ordre0Iter}, we obtain $$\dd_2^2 p^{I,0} = 0,$$ which means that there exist $g_1(x_1,x_3)$ and $g_2(x_1,x_3)$ such that 
$$
p^{I,0} = x_2 g_1 + g_2.
$$ 
Now, differentiating the first and third equations of \eqref{eq:Ordre0Iter} with respect to $x_2$, we obtain 
$$
\dd_1 g_1 = \dd_3 g_1 = 0.
$$ 
By taking $\abs{x} \to +\infty$ in the second equation of \eqref{eq:Ordre0Iter}, we conclude that $g_1 \equiv 0$. Thus, the system \eqref{eq:Ordre0Iter} becomes the following 2D Euler-type system with three components in the half-plane, which is the formal limiting system of \eqref{NSCeps} far from the boundary as $\eps \to 0$
$$
    \left\{
    \begin{aligned}
    	&\partial_t u^{I,0}_1 + u^{I,0}_1 \partial_1 u^{I,0}_1 + u^{I,0}_3 \partial_3 u^{I,0}_1 + \partial_1 p^{I,0} = 0\\
    	&\partial_t u^{I,0}_2 + u^{I,0}_1 \partial_1 u^{I,0}_2 + u^{I,0}_3 \partial_3 u^{I,0}_2 = 0\\
    	&\partial_t u^{I,0}_3 + u^{I,0}_1 \partial_1 u^{I,0}_3 + u^{I,0}_3 \partial_3 u^{I,0}_3 + \partial_3 p^{I,0} = 0\\
    	&\partial_2 u^{I,0}_1 = \partial_2 u^{I,0}_2 = \partial_2 u^{I,0}_3 = \dd_2 p^{I,0} = 0\\
    	&\partial_1 u^{I,0}_1 + \partial_3 u^{I,0}_3 = 0\\
    	&u^{I,0}_3|_{x_3=0} = 0.
    \end{aligned}
    \right.
$$
Since this system is independent of $x_2$, for the compatibility, we need to impose the  well prepared initial data, which means that $$u^{I,0}(0,x_1,x_3) = u^{I,0}_0(x_1,x_3).$$ The boundary condition will be discussed in \eqref{eq:ZerothBC}.

\noindent The system \eqref{eq:Ordre0Ibis} will be completed with a boundary condition for the second component $u^{I,0}_2$. In fact, the trace function $\overline{u^{I,0}_2}(t,x_1)$ on the boundary $\set{x_3 = 0}$ satisfies the following system
$$
	\left\{
    \begin{aligned}
    	&\partial_t\overline{u^{I,0}_2} + \overline{u^{I,0}_1} \partial_1 \overline{u^{I,0}_2} = 0\\
    	&\overline{u^{I,0}_2}(0,x_1) = u^{I,0}_{0,2}(x_1,0).
    \end{aligned}
	\right.
$$

\noindent
\textbf{At the order $\varepsilon^{1/2}$} in \eqref{eq:Interior}, using \eqref{eq:minus0.5Ip} and the divergence-free condition \eqref{eq:divI}, we obtain the system
\begin{empheq}{equation*}
    \left\{
    \begin{aligned}
		&\partial_t u^{I,1}_1 + u^{I,0}_1 \partial_1 u^{I,1}_1 + u^{I,0}_3 \partial_3 u^{I,1}_1 + u^{I,1}_1 \partial_1 u^{I,0}_1 + u^{I,1}_3 \partial_3 u^{I,0}_1 + \dd_1 p^{I,1} = 0\\
		&\partial_t u^{I,1}_2 + u^{I,0}_1 \partial_1 u^{I,1}_2 + u^{I,0}_3 \partial_3 u^{I,1}_2 + u^{I,1}_1 \partial_1 u^{I,0}_2 + u^{I,1}_3 \partial_3 u^{I,0}_2 + \dd_2 p^{I,1} = 0\\
		&\partial_t u^{I,1}_3 + u^{I,0}_1 \partial_1 u^{I,1}_3 + u^{I,0}_3 \partial_3 u^{I,1}_3 + u^{I,1}_1 \partial_1 u^{I,0}_3 + u^{I,1}_3 \partial_3 u^{I,0}_3 + \dd_3 p^{I,1} = 0\\
    	&\partial_2 u^{I,1}_1 = \partial_2 u^{I,1}_2 = \partial_2 u^{I,1}_3 = \dd_2 p^{I,1} = 0\\
    	&\partial_1 u^{I,1}_1 + \partial_3 u^{I,1}_3 = 0
    \end{aligned}
    \right.
\end{empheq}
We also remark that we can not obtain any determined boundary condition for $u^{I,1}$, but only a condition depending on the boundary condition of $u^{B,1}$. Indeed, on the boundary, we recall the value of $u^{I,j}_i$ is related to the value of $u^{B,j}_i$ by the equation
\begin{equation*}
	u^{I,j}_i(t,x_1,0) + u^{B,j}_i(t,x_1,0) = 0\qquad j=0, 1; \quad i=1, 2, 3.
\end{equation*}

Using the same argument, we can prove that $\dd_2 p^{I,1} = 0$, and we obtain the following 2D linearized Euler-type system with three components  in the half-plane
$$
    \left\{
    \begin{aligned}
		&\partial_t u^{I,1}_1 + u^{I,0}_1 \partial_1 u^{I,1}_1 + u^{I,0}_3 \partial_3 u^{I,1}_1 + u^{I,1}_1 \partial_1 u^{I,0}_1 + u^{I,1}_3 \partial_3 u^{I,0}_1 + \dd_1 p^{I,1} = 0\\
		&\partial_t u^{I,1}_2 + u^{I,0}_1 \partial_1 u^{I,1}_2 + u^{I,0}_3 \partial_3 u^{I,1}_2 + u^{I,1}_1 \partial_1 u^{I,0}_2 + u^{I,1}_3 \partial_3 u^{I,0}_2 = 0\\
		&\partial_t u^{I,1}_3 + u^{I,0}_1 \partial_1 u^{I,1}_3 + u^{I,0}_3 \partial_3 u^{I,1}_3 + u^{I,1}_1 \partial_1 u^{I,0}_3 + u^{I,1}_3 \partial_3 u^{I,0}_3 + \dd_3 p^{I,1} = 0\\
    	&\partial_2 u^{I,1}_1 = \partial_2 u^{I,1}_2 = \partial_2 u^{I,1}_3 = \dd_2 p^{I,1} = 0\\
    	&\partial_1 u^{I,1}_1 + \partial_3 u^{I,1}_3 = 0\\
        &u^{I,1}_3(t,x_1,0) = -u^{B,1}_3(t,x_1,0)\\
    	&u^{I,1}(0,x_1,x_3) = u^{I,1}_0(x_1,x_3).
    \end{aligned}
    \right.
$$
Here, we also suppose that the initial data are well prepared, \emph{i.e.} independent of $x_2$.

\subsection{Formal asymptotic expansions inside the boundary layer}\label{subsection2.2}

Inside the boundary layer (in the domain $0<x_3\le \sqrt{\varepsilon}$), we consider the Taylor expansions
\begin{empheq}{align*}
	u^{I,j}_i(t,x_h,x_3) &= u^{I,j}_i(t,x_h,0) + x_3 \partial_3 u^{I,j}_i(t,x_h,0) + \frac{x_3^2}2 \partial_3^2 u^{I,j}_i(t,x_h,0) + \ldots\\
	p^{I,j}(t,x_h,x_3) &= p^{I,j}(t,x_h,0) + x_3 \partial_3 p^{I,j}(t,x_h,0) + \frac{x_3^2}2 \partial_3^2 p^{I,j}(t,x_h,0) + \ldots
\end{empheq}
Performing the change of variable $y = \frac{x_3}{\sqrt{\varepsilon}}$, we have
\begin{empheq}{equation}
	\label{eq:traceUI}
	\left\{
	\begin{aligned}
		u^{I,j}_i(t,x_h,x_3) &= \overline{u^{I,j}_i} + \varepsilon^{\frac{1}2} y \overline{\partial_3 u^{I,j}_i} + \frac{\varepsilon y^2}{2!}  \overline{\partial_3^2 u^{I,j}_i} + \cdots\\
		p^{I,j}(t,x_h,x_3) &= \overline{p^{I,j}} + \varepsilon^{\frac{1}2} y \overline{\partial_3 p^{I,j}} + \frac{\varepsilon y^2}{2!} \overline{\partial_3^2 p^{I,j}} + \cdots
	\end{aligned}
	\right.
\end{empheq}
where $\overline{f} = f(t,x_1,x_2,0)$ is the trace of $f$ on $\set{x_3 = 0}$. 
Now, we will rewrite the identities \eqref{eqs:AnsatzSyst}, taking into account the expansion \eqref{eq:traceUI}. First, we have
\begin{empheq}{align}
	\label{eqs:AnsatzBound00} 
u^\varepsilon &= \pare{u^{B,0} + \overline{u^{I,0}}} + \varepsilon^{\frac{1}2} \pare{u^{B,1} + \overline{u^{I,1}} + y\overline{\partial_3 u^{I,0}}} + \sum_{k=2}^3 \varepsilon^{\frac{k}2} \pare{\frac{y^{k-1}}{(k-1)!} \overline{\partial_3^{k-1} u^{I,1}} + \frac{y^k}{k!} \overline{\partial_3^k u^{I,0}}} + \cdots \\
	&= {\Ucal^{p,0} + \varepsilon^{\frac{1}2} \Ucal^{p,1} + \sum_{k=2}^3 \varepsilon^{\frac{k}2} \pare{\frac{y^{k-1}}{(k-1)!} \overline{\partial_3^{k-1} u^{I,1}} + \frac{y^k}{k!} \overline{\partial_3^k u^{I,0}}} +\cdots.} \notag
\end{empheq}
where we note 
\begin{equation}\label{eqs:AnsatzBounddt}
\Ucal^{p,0}=u^{B,0} + \overline{u^{I,0}},\qquad  \Ucal^{p,1}=u^{B,1} + \overline{u^{I,1}} + y\overline{\partial_3 u^{I,0}}.
\end{equation}
The derivatives of $u^\varepsilon$ with respect to tangential variables write
$$
	 \partial^m_{t, 1, 2} u^\varepsilon =  \partial^m_{t, 1, 2} \Ucal^{p,0} + \varepsilon^{\frac{1}2} \partial^m_{t, 1, 2} \Ucal^{p,1} + \sum_{k=2}^3 \varepsilon^{\frac{k}2} \partial^m_{t, 1, 2} \pare{\frac{y^{k-1}}{(k-1)!} \overline{\partial_3^{k-1} u^{I,1}} + \frac{y^k}{k!} \overline{\partial_3^k u^{I,0}}} + \cdots.
$$
where $m=1, 2$.  For the normal variable, we have
\begin{equation*}
	\partial_3 u^\varepsilon
	= \varepsilon^{-\frac{1}2} \partial_y u^{B,0} + \pare{\partial_y u^{B,1} + \overline{\partial_3 u^{I,0}}} + \sum_{k=1}^3 \varepsilon^{\frac{k}{2}} \pare{\frac{y^{k-1}}{(k-1)!} \overline{\partial_3^{k} u^{I,1}} + \frac{y^k}{k!} \overline{\partial_3^{k+1} u^{I,0}}} + \cdots
\end{equation*}
and
$$
	\partial_3^2 u^\varepsilon = \varepsilon^{-1} \partial_y^2 u^{B,0} + \varepsilon^{-\frac{1}{2}} \partial_y^2 u^{B,1} + \overline{\partial_3^2 u^{I,0}} + \sum_{k=1}^3 \varepsilon^{\frac{k}{2}} \pare{\frac{y^{k-1}}{(k-1)!} \overline{\partial_3^{k+1} u^{I,1}} + \frac{y^k}{k!} \overline{\partial_3^{k+2} u^{I,0}}} + \cdots.
$$
Thus,
\begin{align*}
	-\varepsilon \Delta u^\varepsilon &= -\varepsilon \Delta_h \Ucal^{p,0} - \varepsilon^{\frac{3}2} \Delta_h \Ucal^{p,1}-\partial_y^2 \Ucal^{p,0} - \varepsilon^{\frac{1}2} \partial_y^2 \Ucal^{p,1} - \varepsilon \overline{\partial_3^2 u^{I,0}}\\
&\qquad\qquad - \varepsilon^{\frac{3}2} \pare{\frac{y^{k-1}}{(k-1)!} \overline{\partial_3^{k+1} u^{I,1}} + \frac{y^{k}}{k!} \overline{\partial_3^{k+2} u^{I,0}}} + \cdots .\notag
\end{align*}

For the non-linear term, we only give the explicit calculations for the first orders of its expansion. We write
$$
u^\varepsilon\cdot\nabla u^\varepsilon = u^\varepsilon_h\cdot \nabla_h u^\varepsilon + u^\varepsilon_3 \partial_3 u^\varepsilon.
$$
Then,  we have
\begin{align*}
	u^\varepsilon_h\cdot \nabla_h u^\varepsilon&=\Ucal^{p,0}_h \cdot \nabla_h \Ucal^{p,0}_h + \varepsilon^{\frac{1}2} \Ucal^{p,0}_h \cdot \nabla_h \Ucal^{p,1} + \varepsilon^{\frac{1}2} \Ucal^{p,1}_h \cdot \nabla_h \Ucal^{p,0}_h + \cdots\\	
	u^\varepsilon_3 \partial_3 u^\varepsilon
	&=\Ucal^{p,1}_3 \partial_y \Ucal^{p,0}_h + \varepsilon^{\frac{1}{2}} \Ucal^{p,1}_3 \partial_y \Ucal^{p,1} + \varepsilon^{\frac{1}2} \pare{y\overline{\partial_3 u^{I,1}_3} + \frac{y^2}{2} \overline{\partial_3^2 u^{I,0}_3}} \partial_y \Ucal^{p,0}_h + \cdots .
\end{align*}

\noindent For the Coriolis forcing term (the rotation term), we have
\begin{align*}
	&\frac{e_2\times u^\varepsilon}{\varepsilon}\\
	&=\varepsilon^{-1} \begin{pmatrix} 0\\0\\-\Ucal^{p,0}_1 \end{pmatrix} + \varepsilon^{-\frac{1}2} \begin{pmatrix} \Ucal^{p,1}_3\\0\\-\Ucal^{p,1}_1 \end{pmatrix} + \sum_{k=2}^3 \varepsilon^{\frac{k}2-1} \pint{\frac{y^{k-1}}{(k-1)!} \begin{pmatrix} \overline{\partial_3^{k-1} u^{I,1}_3} \\0\\-\overline{\partial_3^{k-1} u^{I,1}_1} \end{pmatrix} + \frac{y^k}{k!} \begin{pmatrix} \overline{\partial_3^k u^{I,0}_3} \\0\\-\overline{\partial_3^k u^{I,0}_1} \end{pmatrix} } + \cdots. \notag
\end{align*}
Finally, the pressure term is
\begin{empheq}{align}
	\label{eqs:AnsatzBound05}
	\begin{pmatrix} \dd_{x_1} p^\varepsilon \\ \dd_{x_2} p^\varepsilon\\ \dd_{x_3} p^\varepsilon \end{pmatrix}  &= \varepsilon^{-\frac{3}2} \begin{pmatrix} 0\\0\\ \partial_y \Pcal^{p,-2} \end{pmatrix} + \sum_{j=-2}^{-1} \varepsilon^{\frac{j}2} \begin{pmatrix} \partial_1 \Pcal^{p,j} \\ \partial_2 \Pcal^{p,j} \\ \partial_y \Pcal^{p,j+1} \end{pmatrix} + \pint{\begin{pmatrix} \partial_1 \Pcal^{p,0} \\ \partial_2 \Pcal^{p,0} \\ 0 \end{pmatrix} + \sum_{k=0}^2 \frac{y^k}{k!} \begin{pmatrix} 0\\0\\ \overline{\partial_3^{k+1} p^{I,-k}} \end{pmatrix}} \\
	&\qquad + \varepsilon^{\frac{1}2} \sum_{j=-2}^1 \frac{y^{1-j}}{(1-j)!} \begin{pmatrix} \overline{\partial_1 \partial_3^{1-j} p^{I,j}}\\ \overline{\partial_2 \partial_3^{1-j} p^{I,j}}\\ \overline{\partial_3^{2-j} p^{I,j}} \end{pmatrix} + \cdots. \notag
\end{empheq}
where
\begin{empheq}{align}
	\label{eq:Pprandtl2} &\Pcal^{p,-2} = p^{B,-2} + \overline{p^{I,-2}},\qquad \Pcal^{p,-1} = p^{B,-1} + \overline{p^{I,-1}} + y\overline{\partial_3 p^{I,-2}}\\
	\label{eq:Pprandtl0} &\Pcal^{p,0} = p^{B,0} + \overline{p^{I,0}} + y\overline{\partial_3 p^{I,-1}} + \frac{y}2\overline{\partial_3 p^{I,-2}}.
\end{empheq}

\subsection{Incompressibility and boundary conditions} 
The divergence-free property of the velocity field is rewritten as follows
\begin{empheq}{align*}
	0 &= \divv u^\varepsilon = \varepsilon^{-\frac{1}2} \partial_y u^{B,0}_3\pare{t,x_h,\frac{x_3}{\sqrt{\varepsilon}}} + \pint{\divv u^{I,0} + \partial_1 u^{B,0}_1 + \partial_2 u^{B,0}_2 + \partial_y u^{B,1}_3\pare{t,x_h,\frac{x_3}{\sqrt{\varepsilon}}}}\\
	& \qquad \qquad \qquad \qquad \qquad \qquad \qquad \qquad + \varepsilon^{\frac{1}2} \pint{\divv u^{I,1} + \partial_1 u^{B,1}_1 + \partial_2 u^{B,1}_2} + \cdots.
\end{empheq}
Inside the boundary layer, using the expansion \eqref{eq:divI} and \eqref{eq:traceUI}, we deduce the following divergence-free condition
\begin{empheq}{equation*}
	\varepsilon^{-\frac{1}2} \partial_y u^{B,0}_3 + \pare{\partial_1 u^{B,0}_1 + \partial_2 u^{B,0}_2 + \partial_y u^{B,1}_3} + \varepsilon^{\frac{1}2} \pare{\partial_1 u^{B,1}_1 + \partial_2 u^{B,1}_2}= 0.
\end{empheq}
Thus, we obtain the incompressibility of the boundary layer
\begin{empheq}{align}
    \label{eq:divV0b} &\partial_1 u^{B,0}_1 + \partial_2 u^{B,0}_2 + \partial_y u^{B,1}_3 = 0,\\  
    &\partial_1 u^{B,1}_1 + \partial_2 u^{B,1}_2 = 0.\nonumber
\end{empheq}
Moreover, we have
\begin{equation*}	
\partial_y u^{B,0}_3 = 0,
\end{equation*}
which, by taking $y\to +\infty$, gives
$$
	\label{eq:ub03} u^{B,0}_3 = 0.
$$
For the boundary condition in \eqref{NSCeps} on $\set{x_3 = 0}$, we have
$$
	\label{eq:boundarycond} \sum_{j=0}^1 \varepsilon^{\frac{j}2} \pint{u^{I,j} (t,x_h,0) + u^{B,j} (t,x_h,0)} = 0,
$$
which implies that
$$
 \overline{u^{I,0}(t)} + u^{B,0} (t,x_h,0) = 0,
$$
\begin{empheq}{align}
	\label{eq:bound1} \overline{u^{I,1}(t)} + u^{B,1} (t,x_h,0) = 0.
\end{empheq}
In particular, $u^{B,0}_3 = 0$ imply
\begin{equation}\label{eq:ZerothBC}
 u^{I,0}_3|_{x_3=0}= \overline{u^{I,0}_3} = 0,
\end{equation}
which is the boundary condition for Euler equation in \eqref{eq:Ordre0Ibis}, and the third components in \eqref{eq:bound1} gives the boundary condtion of linearized Euler equation in \eqref{eq:Ordre0.5I}.

\subsection{Formal derivation of the governing equations of the fluid in the boundary layer}

Now, we consider the system \eqref{NSCeps} near $\set{x_3 = 0}$, using the asymptotic formal \eqref{eqs:AnsatzBound00} - \eqref{eqs:AnsatzBound05}. 

\noindent
\textbf{At the order $\varepsilon^{-\frac{3}{2}}$}, we have
\begin{empheq}{equation*}  
\partial_y p^{B,-2} = 0,
\end{empheq}
which implies that $p^{B,-2} = 0$ because $p^{B,-2}$ goes to zero as $y\to +\infty$. Using the new notation of the pressure defined in \eqref{eq:Pprandtl2}, we get
$$ 
\partial_y \Pcal^{p,-2} = 0.
$$

\noindent
\textbf{At the order $\varepsilon^{-1}$}, using the fact that $\overline{u^{I,0}_3} = 0$, $u^{B,0}_3 = 0$ and $p^{B,-2} = 0$, we get
\begin{empheq}{equation*}
    \label{eq:minus1} \begin{pmatrix} 0\\0\\-u^{B,0}_1 \end{pmatrix} + \begin{pmatrix} 0\\0\\-\overline{u^{I,0}_1} \end{pmatrix} + \begin{pmatrix} 0\\0\\ \partial_y p^{B,-1} \end{pmatrix} + \overline{\nabla p^{I,-2}} = 0,
\end{empheq}
which implies that $\overline{\partial_1 p^{I,-1}} = \overline{\partial_2 p^{I,-1}} = 0$ and
\begin{empheq}{equation}
	\label{eq:minus1B} - u^{B,0}_1 - \overline{u^{I,0}_1} + \partial_y p^{B,-1} + \overline{\partial_3 p^{I,-2}} = 0.
\end{empheq}
Using the new velocity and pressure defined in \eqref{eq:Pprandtl2} and taking into account the fact that $\Ucal^{p,0}_3 = 0$, we can also write\begin{empheq}{equation}
	\label{eq:Pminus1B} \begin{pmatrix} 0\\0\\-\Ucal^{p,0}_1 \end{pmatrix} + \begin{pmatrix} \partial_1\Pcal^{p,-2}\\ \partial_2 \Pcal^{p,-2}\\ \partial_y \Pcal^{p,-1}\end{pmatrix} = 0.
\end{empheq}

\noindent
\textbf{At the order $\varepsilon^{-1/2}$}, we have
\begin{empheq}{equation*}
    \begin{pmatrix} u^{B,1}_3\\0\\-u^{B,1}_1 \end{pmatrix} + \begin{pmatrix} \overline{u^{I,1}_3} \\0\\-\overline{u^{I,1}_1} \end{pmatrix} + y \begin{pmatrix} \overline{\partial_3 u^{I,0}_3} \\0\\-\overline{\partial_3 u^{I,0}_1} \end{pmatrix} + \begin{pmatrix} \partial_1 p^{B,-1} \\ \partial_2 p^{B,-1} \\ \partial_y p^{B,0} \end{pmatrix} + \begin{pmatrix} \overline{\partial_1 p^{I,-1}}\\ \overline{\partial_2 p^{I,-1}}\\ \overline{\partial_3 p^{I,-1}} \end{pmatrix} + y \begin{pmatrix} \overline{\partial_1 \partial_3 p^{I,-2}}\\ \overline{\partial_2 \partial_3 p^{I,-2}}\\ \overline{\partial_3^2 p^{I,-2}} \end{pmatrix} = 0,
\end{empheq}
or in a equivalent way, using the new velocity and pressure defined in \eqref{eq:Pprandtl0},
\begin{empheq}{equation}
    \label{eq:Pminus0.5B} \begin{pmatrix} \Ucal^{p,1}_3\\0\\-\Ucal^{p,1}_1 \end{pmatrix} + \begin{pmatrix} \partial_1 \Pcal^{p,-1} \\ \partial_2 \Pcal^{p,-1} \\ \partial_y \Pcal^{p,0} \end{pmatrix} = 0.
\end{empheq}
then 
$$
\partial_2 \Pcal^{p,-1} = 0.
$$ 
and
\begin{empheq}{align*}
	&\partial_2 \Ucal^{p,0}_1 = \partial_2 \partial_y \Pcal^{p,-1} = \partial_y \partial_2 \Pcal^{p,-1} = 0\\
	&\partial_2 \Ucal^{p,1}_3 = -\partial_2 \partial_1 \Pcal^{p,-1} = -\partial_1 \partial_2 \Pcal^{p,-1} = 0.
\end{empheq}
Using the divergence-free properties \eqref{eq:divI} and \eqref{eq:divV0b}, we also have
\begin{empheq}{equation*}
	\partial_2 \Ucal^{p,0}_2 = - \partial_1 \Ucal^{p,0}_1 - \partial_y \Ucal^{p,1}_3 = - \partial_1 \partial_y \Pcal^{p,-1} - \pare{-\partial_y \partial_1 \Pcal^{p,-1}} = 0.
\end{empheq}
We deduce that $(\Ucal^{p,0}_1,\Ucal^{p,0}_2,\Ucal^{p,1}_3)$ is a divergence-free vector field which is independent on $x_2$. The fact that $\dd_2 u^{I,0} = \dd_2 u^{I,1} = 0$ implies that
\begin{equation}\label{c2}
\dd_2 u^{B,0}_1 = \dd_2 u^{B,0}_2 = \dd_2 u^{B,1}_3 = 0.
\end{equation}

\begin{rem}
	The leading order of the velocity of the fluid inside the boundary layer also obeys the Taylor-Proudman theorem.
\end{rem}

\noindent
\textbf{At the order $\varepsilon^{0}$}, recalling that $u^{B,0}_3 = \overline{u^{I,0}_3} = 0$, we get the following equation
\begin{empheq}{multline*}
    \partial_t \pare{u^{B,0}_h + \overline{u^{I,0}_h}} - \partial_y^2 u^{B,0}_h + \pare{u^{B,0}_h + \overline{u^{I,0}_h}}\cdot \nabla_h \pare{u^{B,0}_h + \overline{u^{I,0}_h}} + \pare{u^{B,1}_3 + \overline{u^{I,1}_3} + y\overline{\partial_3 u^{I,0}_3}} \partial_y u^{B,0}_h\\
    + y \begin{pmatrix} \overline{\partial_3 u^{I,1}_3} \\0\\-\overline{\partial_3 u^{I,1}_1} \end{pmatrix} + \frac{y^2}2 \begin{pmatrix} \overline{\partial_3^2 u^{I,0}_3} \\0\\-\overline{\partial_3^2 u^{I,0}_1} \end{pmatrix} + \begin{pmatrix} \partial_1 p^{B,0} \\ \partial_2 p^{B,0} \\ 0 \end{pmatrix} + \begin{pmatrix} \overline{\partial_1 p^{I,0}}\\ \overline{\partial_2 p^{I,0}}\\ \overline{\partial_3 p^{I,0}} \end{pmatrix} + y \begin{pmatrix} \overline{\partial_1 \partial_3 p^{I,-1}}\\ \overline{\partial_2 \partial_3 p^{I,-1}}\\ \overline{\partial_3^2 p^{I,-1}} \end{pmatrix} + \frac{y^2}{2} \begin{pmatrix} \overline{\partial_1 \partial_3^2 p^{I,-2}}\\ \overline{\partial_2 \partial_3^2 p^{I,-2}}\\ \overline{\partial_3^3 p^{I,-2}} \end{pmatrix} = 0.
\end{empheq}
From \eqref{eq:minus1I} and \eqref{eq:minus0.5I}, we deduce that
$$
-y \overline{\partial_3 u^{I,1}_1} - \frac{y^2}2 \overline{\partial_3^2 u^{I,0}_1} + y \overline{\partial_3^2 p^{I,-1}} + \frac{y^2}2 \overline{\partial_3^3 p^{I,-2}} = 0.
$$
We also remark that the boundary condition applying to the third equation of the Euler system implies that $$\overline{\partial_3 p^{I,0}} = 0.$$ Then, using the new velocity and pressure defined in \eqref{eqs:AnsatzBounddt} and \eqref{eq:Pprandtl2}, we get
\begin{empheq}{multline*}
	\partial_t \Ucal^{p,0}_h - \partial_y^2 \Ucal^{p,0}_h + \Ucal^{p,0}_h\cdot\nabla_h \Ucal^{p,0}_h + \Ucal^{p,1}_3 \partial_y \Ucal^{p,0}_h + \begin{pmatrix} \partial_1 p^{B,0} + \overline{\partial_1 p^{I,0}} \\ \partial_2 \Pcal^{p,0}  \end{pmatrix} = 0.
\end{empheq}
Taking into account the divergence-free condition \eqref{eq:divV0b}, the identities \eqref{eq:minus1B} and \eqref{eq:Pminus1B}, and 
 $(\Ucal^{p,0}_1,\Ucal^{p,0}_2,\Ucal^{p,1}_3)$ is independs on $x_2$, we deduce that $(\Ucal^{p,0}_1,\Ucal^{p,0}_2,\Ucal^{p,1}_3)$ satisfies the following system
$$
    \left\{
    \begin{aligned}
    	&\partial_t \Ucal^{p,0}_1 - \partial_y^2 \Ucal^{p,0}_1 + \Ucal^{p,0}_1 \partial_1 \Ucal^{p,0}_1 + \Ucal^{p,1}_3 \partial_y \Ucal^{p,0}_1 + \partial_1 p^{B,0} + \overline{\partial_1 p^{I,0}} = 0\\
    	&\partial_t \Ucal^{p,0}_2 - \partial_y^2 \Ucal^{p,0}_2 + \Ucal^{p,0}_1 \partial_1 \Ucal^{p,0}_2 + \Ucal^{p,1}_3 \partial_y \Ucal^{p,0}_2 + \partial_2 \Pcal^{p,0} = 0\\
    	&\partial_1 \Ucal^{p,0}_1 + \partial_y \Ucal^{p,1}_3 = 0,\\
    	&\partial_2 \Ucal^{p,0}_1 = \partial_2 \Ucal^{p,0}_2 = \partial_2 \Ucal^{p,1}_3 = 0.
    \end{aligned}
	\right.
$$
We remark that the above system is not complete, since we need another equation for the component $\Ucal^{p,1}_3$.

\noindent
\textbf{At the order $\varepsilon^{1/2}$}, we have
\begin{empheq}{multline*}
    \partial_t \Ucal^{p,1} - \partial_y^2 \Ucal^{p,1} + \Ucal^{p,0}_h \cdot \nabla_h \Ucal^{p,1} + \Ucal^{p,1}_h \cdot \nabla_h \Ucal^{p,0}_h + \Ucal^{p,1}_3 \partial_y \Ucal^{p,1} + \pare{y\overline{\partial_3 u^{I,1}_3} + \frac{y^2}{2} \overline{\partial_3^2 u^{I,0}_3}} \partial_y \Ucal^{p,0}_h\\
    + \pint{\frac{y^2}{2} \begin{pmatrix} \overline{\partial_3^2 u^{I,1}_3} \\0\\-\overline{\partial_3^2 u^{I,1}_1} \end{pmatrix} + \frac{y^3}{6} \begin{pmatrix} \overline{\partial_3^3 u^{I,0}_3} \\0\\-\overline{\partial_3^3 u^{I,0}_1} \end{pmatrix} } + \sum_{k=0}^3 \frac{y^k}{k!} \begin{pmatrix} \overline{\partial_1 \partial_3^k p^{I,1-k}}\\ \overline{\partial_2 \partial_3^k p^{I,1-k}}\\ \overline{\partial_3^{k+1} p^{I,1-k}} \end{pmatrix} = 0.
\end{empheq}
Here, we are only interested in the component $\Ucal^{p,1}_3$. Using the fact that $\partial_2 \Ucal^{p,1}_3 = 0$, we obtain
$$ 
\partial_t \Ucal^{p,1}_3 - \partial_y^2 \Ucal^{p,1}_3 + \Ucal^{p,0}_1 \partial_1 \Ucal^{p,1}_3 + \Ucal^{p,1}_3 \partial_y \Ucal^{p,1}_3 + \overline{\partial_3 p^{I,1}} + y\overline{\partial_3^2 p^{I,0}} = 0.
$$

Collect all the above formal calculations, we deduce the following governing equations of the boundary layer
\begin{empheq}{equation}
    \label{eq:PrRotE2P1} \tag{P1}
    \left\{
    \begin{aligned}
    	&\partial_t \Ucal^{p,0}_1 - \partial_y^2 \Ucal^{p,0}_1 + \Ucal^{p,0}_1 \partial_1 \Ucal^{p,0}_1 + \Ucal^{p,1}_3 \partial_y \Ucal^{p,0}_1 + \partial_1 p^{B,0} + \overline{\partial_1 p^{I,0}} = 0\\
    	&\partial_t \Ucal^{p,1}_3 - \partial_y^2 \Ucal^{p,1}_3 + \Ucal^{p,0}_1 \partial_1 \Ucal^{p,1}_3 + \Ucal^{p,1}_3 \partial_y \Ucal^{p,1}_3 + \overline{\partial_3 p^{I,1}} + y\overline{\partial_3^2 p^{I,0}} = 0\\
    	&\partial_1 \Ucal^{p,0}_1 + \partial_y \Ucal^{p,1}_3 = 0\\
    	&\partial_2 \Ucal^{p,0}_1 = \partial_2 \Ucal^{p,1}_3 = 0\\
    	&\Ucal^{p,0}_1(t,x_1,0) = 0, \quad \lim_{y\to +\infty} \Ucal^{p,0}_1(t,x_1,y) = \overline{u^{I,0}_1}(x_1)\\
    	&\Ucal^{p,1}_3(t,x_1,0) = 0, \quad \partial_y \Ucal^{p,1}_3(t,x_1,0) = 0\\
       &\Ucal^{p,0}_1(0,x_1,y) = u^{B,0}_{0,1} (x_1,y) + \overline{u^{I,0}_{0,1}}(x_1)\\
    	&\Ucal^{p,1}_3(0,x_1,y) = u^{B,1}_{0,3} (x_1,y) + \overline{u^{I,1}_{0,3}}(x_1) + y\overline{\dd_3u^{I,0}_{0,3}}(x_1).
    \end{aligned}
	\right.
\end{empheq}
and
\begin{empheq}{equation}
	\label{eq:P2}\tag{P2}
	\left\{
	\begin{aligned}
		&\partial_t \Ucal^{p,0}_2 - \partial_y^2 \Ucal^{p,0}_2 + \Ucal^{p,0}_1 \partial_1 \Ucal^{p,0}_2 + \Ucal^{p,1}_3 \partial_y \Ucal^{p,0}_2 + \partial_2 \Pcal^{p,0} = 0\\
		&\Ucal^{p,0}_2(0,x_1,y) = u^{B,0}_{0,2} (x_1,y) + \overline{u^{I,0}_{0,2}}(x_1)\\
		&\Ucal^{p,0}_2(t,x_1,0) = 0,\quad \lim_{y\to +\infty} \Ucal^{p,0}_2(t,x_1,y) = \overline{u^{I,0}_2}(x_1)\\
               &\Ucal^{p,0}_2(0,x_1,y) = u^{B,0}_{0,2} (x_1,y) + \overline{u^{I,0}_{0,2}}(x_1).
	\end{aligned}
	\right.
\end{empheq}

\bigskip
\noindent \textbf{Claim:} {\it The pressure term of the \eqref{eq:P2} satisfies $\partial_2\Pcal^{p,0} = 0$.}

Indeed, applying $\partial_2$ to the first equation of the systems \eqref{eq:PrRotE2P1} and \eqref{eq:P2}, and using the fact that $$\partial_2\Ucal^{p,0}_1 = \partial_2\Ucal^{p,0}_2 = \partial_2\Ucal^{p,1}_3 = 0,$$ we deduce that 
$$
\partial_1\partial_2\Pcal^{p,0} = \partial_2^2\Pcal^{p,0} = 0.
$$ 
This means that, modulo a contant, we have
$$ 
\Pcal^{p,0} = x_2 G_1(t,y) + \int_{-\infty}^{x_1} \tilde{f}(t,x,y)dx,
$$
where 
$$
 G_1 = - \pare{\partial_t \Ucal^{p,0}_2 - \partial_y^2 \Ucal^{p,0}_2 + \Ucal^{p,0}_1 \partial_1 \Ucal^{p,0}_2 + \Ucal^{p,1}_3 \partial_y \Ucal^{p,0}_2}
$$ 
is to be determined and
\begin{equation*}
	\tilde{f} = \partial_1 \Pcal^{p,0} = -\partial_t \Ucal^{p,0}_1 + \partial_y^2 \Ucal^{p,0}_1 - \Ucal^{p,0}_1 \partial_1 \Ucal^{p,0}_1 - \Ucal^{p,1}_3 \partial_y \Ucal^{p,0}_1 - \pare{\frac{y^2}{2} \overline{\partial_3^2 u^{I,0}_3} + y\overline{\partial_3 u^{I,1}_3}}.
\end{equation*}

We recall that, from \eqref{eq:Pminus0.5B}, we have
$$\partial_y \Pcal^{p,0} = \Ucal^{p,1}_1,$$
where $\Ucal^{p,1}_1$ is the solution of the system
\begin{equation*}
    \left\{
    \begin{aligned}
    	&\partial_t \Ucal^{p,1}_1 - \partial_y^2 \Ucal^{p,1}_1 + \Ucal^{p,0}_1 \cdot \dd_1 \Ucal^{p,1}_1 + \Ucal^{p,0}_2 \cdot \dd_2 \Ucal^{p,1}_1 + \Ucal^{p,1}_3 \partial_y \Ucal^{p,1}_1 + \Ucal^{p,1}_1 \partial_1 \Ucal^{p,0}_1\\
    	&\qquad\qquad + \pare{y\overline{\partial_3 u^{I,1}_3} + \frac{y^2}{2} \overline{\partial_3^2 u^{I,0}_3}} \partial_y \Ucal^{p,0}_1 + \pint{\frac{y^2}{2} \overline{\partial_3^2 u^{I,1}_3} + \frac{y^3}{6} \overline{\partial_3^3 u^{I,0}_3} } + \sum_{k=0}^3 \frac{y^k}{k!} \overline{\partial_1 \partial_3^k p^{I,1-k}} = 0\\
    	&\Ucal^{p,1}_1(0,x_1,x_2,y) = u^{B,1}_{0,1}(x_1,y) + \overline{u^{I,1}_{0,1}}(x_1) + y\overline{\partial_3u^{I,0}_{0,1}}(x_1) + \alpha_1(y)x_2\\
    	&\Ucal^{p,1}_1(t,x_1,0) = 0.
    \end{aligned}
	\right.
\end{equation*}
We remark that $\dd_yG_1(t,y) = \dd_2 \Ucal^{p,1}_1$ and we recall that $\dd_1\dd_2 \Ucal^{p,1}_1 = \dd_2^2 \Ucal^{p,1}_1$. So, in fact, we will find $\dd_y G_1$ by solving the following system
\begin{equation}
	\label{eq:G1}
	\left\{
	\begin{aligned}
		&\partial_t(\partial_y G_1) - \partial_y^2 (\partial_y G_1) + (\partial_1 \Ucal^{p,0}_1)(\partial_y G_1) + \Ucal^{p,1}_3 \dd_y (\dd_y G_1) = 0\\
		&\dd_yG_1(0,y) = \alpha_1(y)\\
		&\dd_yG_1(t,0) = 0.
	\end{aligned}
	\right.
\end{equation}
where $\alpha_1$ is a given function, with $\alpha_1(0) = 0$. For the case of well prepared data, we consider the initial data to be independent of $x_2$, so $\alpha_1 \equiv 0$ and it is easy to see that the system \eqref{eq:G1} admits $0$ as a trivial solution. Then, the uniqueness of this solution implies $\dd_yG_1(t,.) \equiv 0$. Replacing $y=0$ in \eqref{eq:G1}, we obtain $G_1(t,0) = 0$, and so $G_1(t,.) \equiv 0$, for any $t \in \RR_+$.

\section{Well-posedness of the boundary layer system}\label{section3}

 In this section  we will prove the well-posedness for system \eqref{eq:PrRotE2}.  Since the pressure term in the first equation of  \eqref{eq:PrRotE2} is unknown, we begin with handling the second one to prove the existence of $ \Ucal^{p,1}_3$  and then  use the divergence-free property to find $\Ucal^{p,0}_1$ .  To do so we insert the representations 
 \begin{eqnarray*}
 \Ucal^{p,0}_1= u^{B,0}_1+ \overline{u^{I,0}_1}, \quad
 	\Ucal^{p,1}_3 = u^{B,1}_3 + \overline{u^{I,1}_3} + y \overline{\partial_3 u^{I,0}_3}
 \end{eqnarray*}
 into the second equation of  \eqref{eq:PrRotE2}, and then make use of the equations  \eqref{eq:Ordre0Ibis} and \eqref{eq:Ordre0.5I}  of  $u^{I,0}_3$ and $u^{I,1}_3$.  It then follows that the unknowns $u^{B,1}_3, u^{B,0}_1$ and $u^{I,1}_3$ satisfy the equation
 \begin{eqnarray*}
 	&&\pare{\partial_t - \partial_y^2 + \overline{\partial_3 u^{I,0}_3} y\partial_y} u^{B,1}_3  + \pare{ u^{B,0}_1+ \overline{u^{I,0}_1}} \partial_1 u^{B,1}_3\\
		&&\qquad  \qquad + \pare{u^{B,1}_3 +\overline{u^{I,1}_3} } \partial_y u^{B,1}_3+ \overline{\partial_3 u^{I,0}_3} u^{B,1}_3+ \pare{ -\partial_1\overline{u^{I,1}_3} + y\overline{\partial_1\partial_3 u^{I,0}_3}}  u^{B,0}_1= 0,
 \end{eqnarray*}
  and  the divergence-free properties \eqref{eq:divV0b} and \eqref{c2} yield 
 \begin{eqnarray*}
 	 u^{B,0}_1= - \int_{-\infty}^{x_1}\partial_y    u^{B,1}_3(t, z,y)dz.
 \end{eqnarray*}  
 Thus the above is just a equation for $u^{B,1}_3$. To solve the system \eqref{eq:PrRotE2}, we consider the following nonlinear initial-boundary problem, 
\begin{equation}
	\label{linsys}
		\left\{
	\begin{aligned}
		&\pare{\partial_t - \partial_y^2 + \overline{\partial_3 u^{I,0}_3} y\partial_y} u  + \pare{  v+ \overline{u^{I,0}_1}} \partial_1 u\\
		&\qquad  \qquad + \pare{ u -   u(t,x_1,0)} \partial_y u+ \overline{\partial_3 u^{I,0}_3} u+ \pare{\partial_1   u(t,x_1,0) + y\overline{\partial_1\partial_3 u^{I,0}_3}}  v= 0,\\
		&\partial_y u|_{y=0} =- \overline{\partial_3u^{I, 0}_3}(t,x_1),\quad\lim_{y\to +\infty} u(t,x_1,y) = 0,\\
		&u|_{t=0}= u_{0}(x_1,y),
	\end{aligned}
	\right.
\end{equation}
where  the unknown functions  $  u$ and $  v$ are  linked by the relation   
\begin{eqnarray}\label{relauv}
 v(t,x_1,y)=  - \int_{-\infty}^{x_1}\partial_y   u(t, z,y)dz.
\end{eqnarray}
Recall the functions $u^{I,0}_1, u^{I,0}_3$ are the solutions to the Euler-type system \eqref{eq:Ordre0Ibis}.  By Theorem \ref{th:E2D}, we see $u^{I,0}_1, u^{I,0}_3\in \mathcal A_\tau$ for some $\tau>0.$

The main result of this section can be stated as follows.

\begin{thm}\label{th41}
Suppose the initial data $u_{0}\in X_{\rho_0, a_0}$ for some $\rho_0>0$ and $a_0>0$ and satisfies the compatibility conditions. Then the system \eqref{linsys} admits a unique solution 
\begin{eqnarray*}
	u\in L^\infty\inner{[0,T_*]; X_{\rho_*, a}} 
\end{eqnarray*}
for some $\rho_*>0$,  $a>0$ and $T_*>0. $ 
\end{thm}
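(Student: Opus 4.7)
The plan is to prove Theorem \ref{th41} via an iteration scheme in the analytic space $X_{\rho,a}$ with a time-dependent analyticity radius $\rho(t)=\rho_0-\lambda t$. First I would homogenize the boundary condition at $y=0$ by writing $u=\tilde u+w$, where the lift $w(t,x_1,y)=-\overline{\partial_3u^{I,0}_3}(t,x_1)\,\varphi(y)$ with $\varphi'(0)=1$ and $\varphi(y)e^{ay^2}$ decaying (e.g. $\varphi(y)=y e^{-2ay^2}$) absorbs the inhomogeneous Neumann data and is controlled in $X_{\rho,a}$ using the analytic regularity of $u^{I,0}_3\in\mathcal A_\tau$ from Theorem \ref{th:E2D} together with estimate \eqref{eseuler}. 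Then $\tilde u$ satisfies an equation of the same structure, but with zero boundary data, modulo source terms depending only on the (already controlled) Euler profiles. Next I would set up the Picard-type linearization
\begin{equation*}
\bigl(\partial_t-\partial_y^2+\overline{\partial_3 u^{I,0}_3}\,y\partial_y\bigr)u^{n+1}+\bigl(v^n+\overline{u^{I,0}_1}\bigr)\partial_1 u^{n+1}+\bigl(u^n-u^n(t,x_1,0)\bigr)\partial_y u^{n+1}+\text{(l.o.t.)}=0,
\end{equation*}
with $v^n=-\int_{-\infty}^{x_1}\partial_y u^n\,dz$ and $u^0\equiv 0$; each iterate is a standard linear parabolic problem (Dirichlet-type after the lift) solvable in Sobolev classes.

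\textbf{Uniform analytic estimate.} The heart of the proof is a uniform-in-$n$ bound of $u^n$ in $L^\infty([0,T];X_{\rho(t),a})$ for some $T>0$. For each multi-index order $m$ I would differentiate $m$ times in $x_1$, multiply by the weight $\comii{x_1}^{2\ell}e^{2ay^2}\partial_1^m u^{n+1}$, integrate over $\RR^2_+$, and assemble the weighted $\ell^2$-sum dictated by the norm $|\cdot|_{X_{\rho,a}}$. Four sources of work arise: (i) the parabolic operator contributes the dissipation $\|\partial_y\partial_1^m u^{n+1}\|_{L^2}^2$; (ii) the stretching term $\overline{\partial_3 u^{I,0}_3}\,y\partial_y$, combined with the Gaussian weight $e^{ay^2}$, yields an extra factor $\sim 4a\,\overline{\partial_3 u^{I,0}_3}\|y e^{ay^2}\partial_1^m u^{n+1}\|^2$ that is benign provided $\|\overline{\partial_3 u^{I,0}_3}\|_{L^\infty}\cdot a$ is small enough (this can be arranged either by choosing $T$ small or by letting $a(t)$ decrease in $t$); (iii) the convolution-type nonlocal term $v^n\partial_1 u^{n+1}$ produces, after $m$ differentiations, a borderline term $\partial_1^{m+1}u^{n+1}\,v^n$ which is exactly at the loss of one $x_1$-derivative; (iv) the nonlinear factor $(u^n-u^n|_{y=0})\partial_y u^{n+1}$ can be handled by Hardy's inequality in $y$ combined with the dissipation $\partial_y\partial_1^m u^{n+1}$.

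\textbf{Handling the loss of derivative.} The critical term (iii) is the main obstacle and is precisely what forces an analytic framework. Using the divergence relation $v^n=-\int_{-\infty}^{x_1}\partial_y u^n\,dz$, one has $\|\comii{x_1}^\ell v^n\|_{L^\infty_{x_1}L^2_y}\lesssim\|\comii{x_1}^\ell\partial_y u^n\|_{L^2}$; after the Leibniz expansion, all nonborderline Leibniz pieces close by the algebra structure of $X_{\rho,a}$ (Cauchy products of factorials) and the dissipation in (i). The borderline piece contributes, in the definition of the norm, something of the form
\begin{equation*}
\sum_{m\geq 3}\Bigl[\tfrac{\rho(t)^{m-1}}{(m-3)!}\Bigr]^{2}\bigl|\bigl\langle\partial_1^{m+1}u^{n+1},\comii{x_1}^{2\ell}e^{2ay^2}\partial_1^m u^{n+1}\bigr\rangle\bigr|,
\end{equation*}
which is absorbed by introducing the dissipative gain $-\dot\rho(t)\sum_m[\tfrac{\rho(t)^{m-2}}{(m-4)!}]^2\|\comii{x_1}^\ell e^{ay^2}\partial_1^m u^{n+1}\|^2$ that appears automatically when one differentiates $|u^{n+1}|_{X_{\rho(t),a}}^2$ in $t$. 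Choosing $\lambda=-\dot\rho$ proportional to $\sup_n|u^n|_{X_{\rho(t),a}}+|\text{Euler data}|$ yields the inequality
\begin{equation*}
\tfrac{d}{dt}|u^{n+1}|_{X_{\rho(t),a}}^2\leq C\bigl(|u^{n+1}|_{X_{\rho(t),a}}^2+|u^n|_{X_{\rho(t),a}}^2+\mathcal E(t)\bigr),
\end{equation*}
with $\mathcal E(t)$ depending only on the Euler/linearized-Euler quantities. A standard bootstrap gives a time $T_*>0$ and radii $\rho_*>0$, $a>0$ so that $\{u^n\}$ is uniformly bounded in $L^\infty([0,T_*];X_{\rho_*,a})$.

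\textbf{Convergence and uniqueness.} For convergence, I would derive the same type of estimate for the difference $u^{n+1}-u^n$ in a slightly smaller analytic space (radii $\rho_*/2$, say), obtaining a contraction on a possibly shorter interval; the limit $u$ solves \eqref{linsys} in $L^\infty([0,T_*];X_{\rho_*,a})$. Uniqueness follows from the analogous energy estimate applied to the difference of any two solutions, which forces it to vanish. The main conceptual obstacle throughout is step (iii) above: the derivative loss in the nonlocal coupling $v=-\int_{-\infty}^{x_1}\partial_y u\,dz$ is exactly what prevents a Sobolev well-posedness theory and makes the analyticity in $x_1$ with shrinking radius indispensable.
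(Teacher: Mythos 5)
Your overall architecture (an approximation scheme, weighted analytic norms in $x_1$, a time-shrinking radius) is the right family of ideas, and your Neumann lift is an acceptable alternative to the paper's treatment (the paper keeps the inhomogeneous data, estimates $\omega=\partial_y u$ directly, and converts the resulting boundary term into a total time derivative via the trace of the equation at $y=0$, Lemma \ref{lowleft}); likewise Picard iteration versus the paper's $\eps\partial_1^2$-regularization plus compactness is not the issue. The genuine gap is in the core uniform estimate: you misidentify the derivative-loss term. The transport contribution $v^n\partial_1^{m+1}u^{n+1}$ that you single out as ``borderline'' is in fact harmless, since $\partial_1 v=-\partial_y u$ allows an integration by parts in $x_1$ with no loss of derivative. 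The true loss comes from the zeroth-order coupling $\pare{\partial_1 u(t,x_1,0)+y\,\overline{\partial_1\partial_3 u^{I,0}_3}}\,v$ in \eqref{linsys} (buried in your ``(l.o.t.)''), whose $m$-th Leibniz expansion contains $\partial_1^{m+1}u(t,x_1,0)\cdot v$; this has no transport structure and is precisely what forces the $Y$-type gain from the shrinking radius (the terms $\mathcal R_8^{m}$, $\mathcal R_9^{m}$ in the paper's Section \ref{section4}).

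Worse, since $X_{\rho,a}$ also controls $\partial_1^m\partial_y u$, the same estimate must be run on $\omega=\partial_y u$, and there the lost derivative $\partial_1^{m+1}u(t,x_1,0)$ is multiplied by $\partial_y v=-\int_{-\infty}^{x_1}\partial_y^2 u\,dz$, which is controlled only by the dissipation norm $\abs{u}_{Z_{\rho,a}}$ (hence only in $L^2_t$), not by $\abs{u}_{X_{\rho,a}}$. The borderline contribution is therefore of size $C\abs{u}_{Z_{\rho,a}}\abs{u}_{Y_{\rho,a}}^2$ (estimate \eqref{cru1}), and your linear choice $\rho(t)=\rho_0-\lambda t$ with $\lambda\sim\sup_n\abs{u^n}_{X_{\rho,a}}$ cannot absorb it: absorption requires $-\rho'(t)\gtrsim\abs{u(t)}_{Z_{\rho,a}}$ pointwise in time, and any attempt to split by Young's inequality produces $\abs{u}_{Y_{\rho,a}}^4$, which is not controlled. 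This is exactly why the paper defines the radius through the nonlinear ODE $\rho'(t)=-\abs{u^\eps(t)}_{Z_{\rho(t),a}}$ in \eqref{eq+} (solved by Cauchy--Lipschitz) and keeps $\rho$ bounded below on a short interval using the $L^2_t$ bound on $\abs{u}_{Z_{\rho,a}}$ furnished by the dissipation. Without this device (or an equivalent one) your uniform bound, and hence the whole scheme, does not close.
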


We now proceed the proof of the theorem \ref{th41}  through the following parabolic approximations. 
 
 \noindent{\bf The approximate solutions.}   
 Consider the following regularized system, for $\eps>0$, 
 \begin{equation}
	\label{linsys+}
		\left\{
	\begin{aligned}
		&\pare{\partial_t - \eps\partial_1^2-\partial_y^2 + \overline{\partial_3 u^{I,0}_3} y\partial_y } u^\eps  + \pare{  v^\eps+ \overline{u^{I,0}_1}} \partial_1 u^\eps\\
		&\qquad  \qquad + \pare{ u^\eps  -   u^\eps(t,x_1,0)} \partial_y u^\eps+ \overline{\partial_3 u^{I,0}_3} u^\eps+ \pare{\partial_1   u(t,x_1,0) + y\overline{\partial_1\partial_3 u^{I,0}_3}}  v= 0,\\
		&\partial_y u^\eps(t, x_1,0) = \overline{\partial_1u^{I,0}_1}(t,x_1),\quad
		\lim_{y\to +\infty} u^\eps(t,x_1,y) = 0, \\
		&u^\eps|_{t=0}= u_{0}(x_1,y).
	\end{aligned}
	\right.
\end{equation}

The above is a nonlinear    parabolic equation,  and  from classical theory    we  can   deduce the following local well-posedness result.
  
\begin{thm} \label{th42}
	Suppose the initial data $u_{0}\in X_{2\rho_0, a_0}$ for some $\rho_0>0$, $a_0>0$ and satisfies the compatibility conditions. Then the system \eqref{linsys+} admits a unique solution 
\begin{eqnarray*}
	u^\eps\in L^\infty\inner{[0,T_\eps]; X_{\rho_0, a}} 
\end{eqnarray*}
for some $0<a<a_0$ independent of $\eps$ and $ T_\eps>0$  depends  on $\eps$ . 
\end{thm}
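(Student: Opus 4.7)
Since $\eps>0$ is fixed and the system \eqref{linsys+} is now genuinely parabolic in both tangential and normal directions (the $-\eps\partial_1^2-\partial_y^2$ operator provides full viscosity), I would prove Theorem \ref{th42} via a Picard iteration scheme coupled with weighted analytic energy estimates. Set $u^{(0)}:=u_{0}$ and, for $n\geq 0$, let $u^{(n+1)}$ solve the linearized problem
\begin{eqnarray*}
&&\pare{\partial_t-\eps\partial_1^2-\partial_y^2+\overline{\partial_3u^{I,0}_3}\,y\partial_y}u^{(n+1)}+\pare{v^{(n)}+\overline{u^{I,0}_1}}\partial_1 u^{(n+1)}\\
&&\qquad+\pare{u^{(n)}-u^{(n)}(t,x_1,0)}\partial_y u^{(n+1)}+\overline{\partial_3u^{I,0}_3}\,u^{(n+1)}+\pare{\partial_1 u^{(n)}(t,x_1,0)+y\overline{\partial_1\partial_3u^{I,0}_3}}v^{(n)}=0,
\end{eqnarray*}
with $v^{(n)}=-\int_{-\infty}^{x_1}\partial_y u^{(n)}(t,z,y)\,dz$ and the prescribed initial/boundary conditions. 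For each $n$ this is a linear parabolic equation with smooth coefficients, so classical theory (Galerkin approximation, or an explicit heat-kernel representation combined with a Duhamel fixed point) yields a smooth solution on a short interval. The task is then to show that the sequence enjoys \emph{uniform} bounds in $X_{\rho_0,a}$ on a common interval $[0,T_\eps]$.

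The core step is an energy estimate in the weighted analytic norm with a time-decaying Gaussian weight $a(t)=a_0-Ct$, chosen so that $a(T_\eps)\geq a>0$. After applying $\partial_1^m\partial_y^j$ (with $0\le j\le 1$ and $m\geq 0$) to the equation, I multiply by $\comii{x_1}^{2\ell}e^{2a(t)y^2}\partial_1^m\partial_y^j u^{(n+1)}$ and integrate over $\RR^2_+$. Three structural points must be handled carefully: (i) the transport coefficient $y\partial_y$ integrates by parts in $y$ into $-\int(1+4ay^2)e^{2ay^2}|\cdot|^2\,dy$, producing a positive term proportional to $y^2$ that, together with the negative contribution $\dot a(t)\int y^2e^{2ay^2}|\cdot|^2$ from differentiating the weight, absorbs the linearly growing forcing $y\overline{\partial_1\partial_3 u^{I,0}_3}v^{(n)}$; (ii) the nonlocal term $v^{(n)}$ is controlled through the tangential weight via the Hardy-type bound $\|\comii{x_1}^\ell v^{(n)}\|_{L^\infty_{x_1}L^2_y}\leq C\|\comii{x_1}^\ell\partial_y u^{(n)}\|_{L^2}$, which requires $\ell>1/2$ as assumed in Definition \ref{xrho}; (iii) the analytic tail is closed via the convolution inequality $\sum_{k=3}^{m-3}\binom{m}{k}\frac{\rho_0^{m-2}}{(k-3)!(m-k-3)!}\leq C\rho_0^{m-2}\frac{2^m}{(m-3)!}$, which is the heart of the Cauchy-Kowalewski-type loss of radius from $2\rho_0$ (the initial datum) down to $\rho_0$. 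The boundary terms generated by integration by parts in $y$ are handled using the Dirichlet-type condition $\partial_y u^\eps|_{y=0}=-\overline{\partial_3 u^{I,0}_3}$ and the a priori estimates \reff{eseuler}--\reff{dt} on the outer Euler flow furnished by Theorem \ref{th:E2D}. Combining these, I obtain an estimate of the form
\begin{eqnarray*}
\frac{d}{dt}\abs{u^{(n+1)}}_{X_{\rho_0,a(t)}}^2\leq C_\eps\pare{1+\abs{u^{(n)}}_{X_{\rho_0,a(t)}}^2}\abs{u^{(n+1)}}_{X_{\rho_0,a(t)}}^2,
\end{eqnarray*}
which closes on some $[0,T_\eps]$ (with $T_\eps$ depending on $\eps$ through the constant $C_\eps$ and on $\abs{u_0}_{X_{2\rho_0,a_0}}$).

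Once the uniform bound is obtained, convergence of $(u^{(n)})$ is deduced by an analogous energy estimate on the difference $w^{(n)}:=u^{(n+1)}-u^{(n)}$ in a slightly weaker norm, say $X_{\rho_0/2,a}$, showing $\sum_n\abs{w^{(n)}}$ is summable for $T_\eps$ small enough; the limit $u^\eps$ belongs to $L^\infty([0,T_\eps];X_{\rho_0,a})$ by weak-$*$ compactness and solves \eqref{linsys+}. Uniqueness is obtained by the same estimate applied to the difference of two solutions. The main obstacle is step (iii): tracking the precise combinatorics of the convolution sums so that the nonlocal/nonlinear terms $v\partial_1 u$ and $(u-u|_{y=0})\partial_y u$, together with the inhomogeneous term $y\overline{\partial_1\partial_3 u^{I,0}_3}v$, all fit into the factorially-weighted norm without degrading the analytic radius faster than $2\rho_0\to\rho_0$; this is precisely where the definition of $X_{\rho,a}$ with the factorials $(m-3)!$ for $m\geq 3$ is engineered to match the loss produced by one spatial derivative per nonlinear interaction.
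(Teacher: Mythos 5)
The paper itself gives no detailed proof of Theorem \ref{th42}: it simply observes that, for fixed $\eps>0$, \eqref{linsys+} is a genuinely parabolic equation and appeals to classical theory. Your Picard iteration with weighted analytic energy estimates is a legitimate way to implement this, but as written the mechanism you invoke to close the analytic tail does not work. The static loss of radius from $2\rho_0$ (the datum) to $\rho_0$ (the solution) cannot absorb the loss of one tangential derivative hidden in the zeroth-order source term $\pare{\partial_1 u^{(n)}(t,x_1,0)+y\,\overline{\partial_1\partial_3u^{I,0}_3}}v^{(n)}$ and in the corresponding terms of the equation for $\partial_y u$ (note $\partial_1^k v^{(n)}=-\partial_1^{k-1}\partial_y u^{(n)}$, and at level $m$ you meet $\partial_1^{m+1}u^{(n)}(t,x_1,0)$). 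The extra derivative falls on the \emph{previous iterate at positive times}, which is only controlled in $X_{\rho_0,a}$, not on the initial datum; converting $\|\partial_1^{m+1}u^{(n)}\|$ into the level-$m$ weight $\rho_0^{m-1}/(m-3)!$ costs an unbounded factor of order $m/\rho_0$, so your claimed Gr\"onwall inequality $\frac{d}{dt}\abs{u^{(n+1)}}_{X_{\rho_0,a(t)}}^2\leq C_\eps\pare{1+\abs{u^{(n)}}_{X_{\rho_0,a(t)}}^2}\abs{u^{(n+1)}}_{X_{\rho_0,a(t)}}^2$ is not attainable in that form, and the convolution inequality of your step (iii) does not repair it: a fixed two-radius gap cannot be traded against a derivative loss occurring at every time and every iteration.

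The gap is fixable, and fixing it is precisely where the $\eps$-dependence of $T_\eps$ enters. Either use the tangential dissipation: the energy identity for each iterate controls $\eps\int_0^T\norm{\comii{x_1}^\ell e^{ay^2}\partial_1^{m+1}\partial_y^j u^{(n)}}_{L^2}^2\,dt$, and the offending terms can then be absorbed by Cauchy--Schwarz at the price of a factor $C/\eps$, which closes the scheme on an interval $T_\eps$ shrinking with $\eps$ (this matches the statement that $a<a_0$ is independent of $\eps$ while $T_\eps$ is not). Alternatively, run a time-decreasing radius $\rho(t)$ so that $-\rho'(t)$ produces the coercive half-derivative term, exactly as the paper does for the uniform estimate in Proposition \ref{prenes}, or set the iteration in an abstract Cauchy--Kowalewskaya framework over the scale $\set{X_{\rho,a}}_{\rho_0\le\rho\le 2\rho_0}$. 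With one of these ingredients made explicit, the rest of your outline (linear solvability of each iterate, contraction in a weaker norm, uniqueness by the same difference estimate) is sound.
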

 
\noindent{\bf Uniform estimates for the approximate solutions.}  We will  perform the uniform estimate with respect to $\eps$ for the approximate solutions $u^\eps$ given in the previous Theorem.  The main result here can be stated as follows. 

\begin{prop}\label{propapri}
Suppose $u^\eps\in L^\infty\inner{[0,T_\eps]; X_{\rho_0, a}}$	is a solution to the initial-boundary problem \eqref{linsys+}.  Then   there exists  $0<\rho_*\le \rho_0$, depending only on $\abs{u_0}_{X_{\rho_0,a_0}}$,   such that $u^\eps\in L^\infty\inner{[0,T_\eps]; X_{\rho_*, a}}$	 for all $\eps>0.$ Moreover
\begin{eqnarray}\label{uniest}
	 \norm{u^\eps}_{L^\infty\inner{[0,T_\eps]; X_{\rho_*,a} }} \leq  C \abs{u_0}_{X_{\rho_0,a_0} },
\end{eqnarray}
where $C$ is a constant   depending only on $a_0, \rho_0,  \tau, \norm{u^{I,0}_3}_{\mathcal A_\tau}$ and $\norm{u^{I,0}_1}_{\mathcal A_\tau} $,  but independent of  $\eps$.
\end{prop}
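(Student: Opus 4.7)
The plan is to run a weighted analytic energy estimate in the norm $\abs{\cdot}_{X_{\rho(t),a}}$ with a time-decreasing analytic radius $\rho(t)=\rho_0-Kt$, invoking the Cauchy--Kowalevski mechanism used to defeat the one-derivative loss intrinsic to Prandtl-type systems. The negative contribution obtained by differentiating the weight $\rho(t)^{2(m-1)}/[(m-3)!]^2$ in $t$ will, after Cauchy--Schwarz, act as a gain of one tangential derivative on the top order, as long as $-\dot\rho$ dominates the current size of $u^\eps$.

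Concretely, I would apply $\partial_1^m\partial_y^j$ (for $j\in\{0,1\}$) to \eqref{linsys+}, multiply by $\rho(t)^{2(m-1)}/[(m-3)!]^2\cdot\langle x_1\rangle^{2\ell}e^{2ay^2}\partial_1^m\partial_y^j u^\eps$ (and the analogous factor for $m\le 2$), and integrate over $\mathbb{R}^2_+$. Integration by parts turns $-\partial_y^2 u^\eps$ into the positive dissipation $\norm{\langle x_1\rangle^\ell e^{ay^2}\partial_1^m\partial_y^{j+1}u^\eps}_{L^2}^2$ plus corrections carrying factors of $y$, which are reabsorbed by choosing $a<a_0$. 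The artificial viscosity $-\eps\partial_1^2 u^\eps$ contributes a nonnegative quantity, discarded uniformly in $\eps$. The linear drift $\overline{\partial_3 u^{I,0}_3}\,y\partial_y u^\eps$ and the $y$-growing source $y\overline{\partial_1\partial_3 u^{I,0}_3}\,v^\eps$ are handled by the Gaussian weight after Cauchy--Schwarz, since $a<a_0$ leaves a small spectral gap.

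The main work is the Leibniz expansion of the transport. From
\[
\partial_1^m\bigl[(v^\eps+\overline{u^{I,0}_1})\partial_1 u^\eps\bigr]=\sum_{k=0}^m\binom{m}{k}\partial_1^k(v^\eps+\overline{u^{I,0}_1})\,\partial_1^{m-k+1}u^\eps,
\]
the endpoint $k=0$ gives the dangerous loss-of-derivative term $v^\eps\,\partial_1^{m+1}u^\eps$. Using the representation $v^\eps=-\int_{-\infty}^{x_1}\partial_y u^\eps(\cdot,z,\cdot)\,dz$ together with $\ell>1/2$, one bounds $v^\eps$ in an appropriate $L^\infty_{x_1}$ norm by a low-order piece of $\abs{u^\eps}_{X_{\rho,a}}$, reducing the dangerous contribution to something like $\abs{u^\eps}_{X_{\rho,a}}\cdot\rho^{m-1}/(m-3)!\cdot\norm{\partial_1^{m+1}u^\eps}_{L^2_{w}}$, which is then absorbed by the $-\dot\rho$-decay term described above. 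The middle range $1\le k\le m-1$ is a Cauchy-product-type convolution controlled by the combinatorial estimate $\sum_k\binom{m}{k}/[(k-3)!(m-k-3)!]\lesssim 2^m/(m-3)!$, giving a bound quadratic in $\abs{u^\eps}_{X_{\rho,a}}$. The piece $(u^\eps-u^\eps|_{y=0})\partial_y u^\eps$ is treated analogously; the trace subtraction yields a factor of $y$ via the fundamental theorem of calculus, which pairs with the Gaussian weight and avoids a normal-derivative loss.

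The outer-flow coefficients $\overline{u^{I,0}_1}$, $\overline{\partial_3 u^{I,0}_3}$, $\overline{\partial_1\partial_3 u^{I,0}_3}$ are analytic in $x_1$ of radius $\tau>0$ by Theorem~\ref{th:E2D} and the estimates \eqref{eseuler}--\eqref{dt}, so taking $\rho_0$ small relative to $\tau$ makes all factorials align. Assembling everything yields a differential inequality of the schematic form
\[
\frac{d}{dt}\abs{u^\eps(t)}_{X_{\rho(t),a}}^2+\bigl(-\dot\rho(t)-C\abs{u^\eps(t)}_{X_{\rho(t),a}}\bigr)\Lambda_{\rho(t)}(u^\eps)\le C\bigl(1+\abs{u^\eps(t)}_{X_{\rho(t),a}}^2\bigr),
\]
where $\Lambda_{\rho(t)}(u^\eps)$ collects the top-order positive contributions. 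Choosing $K$ proportional to $\abs{u_0}_{X_{\rho_0,a_0}}$ makes the bracket nonnegative on a short interval $[0,T_*]$ with $T_*\sim\rho_0/K$; dropping $\Lambda$ and applying Gr\"onwall produces \eqref{uniest} with $\rho_*=\rho(T_*)>0$, independent of $\eps$. The principal obstacle is maintaining the balance between the two ``losses'' simultaneously, the tangential loss in $v^\eps\partial_1 u^\eps$ and in the trace source $\partial_1 u^\eps|_{y=0}\cdot v^\eps$ (defeated by $-\dot\rho$), against the normal loss coming from the $y$-linear coefficients (defeated by the Gaussian), while keeping exact track of the combinatorial weights $\rho^{m-1}/(m-3)!$ built into $X_{\rho,a}$.
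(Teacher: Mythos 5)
Your overall framework (weighted analytic energy estimates with a shrinking tangential radius, Leibniz expansion, Young's inequality for the factorial weights) matches the paper's, but the specific absorption mechanism you propose does not close for this system. You take $\rho(t)=\rho_0-Kt$ with a \emph{constant} rate $K\sim\abs{u_0}_{X_{\rho_0,a_0}}$ and claim the dangerous contributions are of the form $\abs{u^\eps}_{X_{\rho,a}}$ times a top-order quantity, absorbed by $-\dot\rho$. That misidentifies where the difficulty lies: the top-order transport term $(v^\eps+\overline{u^{I,0}_1})\partial_1^{m+1}u^\eps$ is in fact harmless by antisymmetry, since $\partial_1 v^\eps=-\partial_y u^\eps$ lets one integrate by parts (this is how the paper treats it in Lemma \ref{lowleft}). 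The genuinely dangerous terms are those containing $\partial_y v^\eps=-\int_{-\infty}^{x_1}\partial_y^2u^\eps\,dz$ and $\partial_y\omega=\partial_y^2u^\eps$ (the terms $\mathcal R_6,\mathcal R_8,\mathcal R_9$): they force the \emph{second} normal derivative into the trilinear estimate, and this is controlled only by the dissipation norm $\abs{u^\eps}_{Z_{\rho,a}}$, which the energy inequality bounds merely in $L^2_t$, not in $L^\infty_t$. The resulting structure is $C\abs{u^\eps}_{Z_{\rho,a}}\abs{u^\eps}_{Y_{\rho,a}}^2$, and to absorb it by the Cauchy--Kowalevskaya gain $-\rho'(t)\abs{u^\eps}_{Y_{\rho,a}}^2$ you need $-\rho'(t)\gtrsim\abs{u^\eps(t)}_{Z_{\rho,a}}$ \emph{pointwise in time}; no constant $K$ can guarantee this a priori. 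This is exactly why the paper defines the radius through the solution-dependent ODE $\rho_\eps'(t)=-\abs{u^\eps(t)}_{Z_{\rho_\eps(t),a}}$ (solved via Cauchy--Lipschitz), and then uses $\int_0^t\abs{u^\eps}_{Z}\,ds\le t^{1/2}(\int_0^{T_\eps}\abs{u^\eps}_{Z}^2)^{1/2}$ together with the Gronwall bound to keep $\rho_\eps(t)\ge\rho_*$ up to the time $T_*$ of \eqref{tstar}. Splitting by Young, $\abs{u}_Z\abs{u}_Y^2\le\frac18\abs{u}_Z^2+C\abs{u}_Y^4$, does not rescue your scheme either, since $\abs{u}_Y$ is strictly stronger than $\abs{u}_X$ and only appears on the left multiplied by $-\rho'$.

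Two further points are missing from your outline. First, the boundary term: the Neumann data $\partial_y u^\eps|_{y=0}=\overline{\partial_1 u^{I,0}_1}\neq 0$ produce, after integrating $-\partial_y^2$ by parts at the level of $\varphi_m=\comii{x_1}^\ell e^{ay^2}\partial_1^m\partial_y u^\eps$, a nonvanishing trace integral; the paper controls it in Lemma \ref{lowleft} by using the equation restricted to $y=0$ to express $\partial_y^2u^\eps|_{y=0}$ and rewriting the contribution as an exact time derivative plus terms bounded via \eqref{dt}. Second, your static choice $a<a_0$ does not by itself absorb the $\norm{y\varphi_m}_{L^2}^2$ errors generated by the Gaussian commutators and the $y$-linear drift, because within a fixed-$a$ energy estimate $\norm{y\varphi_m}$ is not controlled by $\norm{\varphi_m}$; the paper takes $a(t)=a_0-(2a_0^2+C_0)t$ decreasing so that $-a'(t)\norm{y\varphi_m}^2$ appears on the left and absorbs them. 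Without the solution-dependent radius, the boundary treatment, and the time-decreasing Gaussian weight, the differential inequality you write down cannot be derived, so the proposal as it stands has a genuine gap.
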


To prove the above proposition, we need   another two {\it auxiliary  norms}  $ \abs{\cdot}_{Y_{\rho, a}}$ and $ \abs{\cdot}_{Z_{\rho, a}}$ which are defined by
\begin{equation}\label{Yrho}
\begin{split}
 &\abs{u}_{Y_{\rho, a}}^2=\sum_{ m\leq 2 } \inner{\sum_{0\leq j\leq 1}  \norm{\comii{x_1}^\ell e^{a y^2}\partial_1^m\partial_y^j u}_{L^2(\mathbb R_+^2)}}^2\\
 &\qquad\qquad\qquad\qquad+\sum_{m\geq 3}    \inner{\sum_{0\leq j\leq 1} (m-1)^{1/2}  \rho^{-1/2}  \frac{\rho^{m-1}}{ (m-3)!}\norm{\comii{x_1}^\ell e^{a y^2}\partial_1^m\partial_y^j u}_{L^2(\mathbb R_+^2)}}^2,
\end{split}
\end{equation}
and
\begin{equation*}
\begin{split}
 \abs{u}_{Z_{\rho, a}}^2&=\sum_{ m\leq 2 } \inner{\sum_{1\leq j\leq 2}    \norm{ \comii{x_1}^\ell e^{a y^2}\partial_1^m\partial_y^j u}_{L^2(\mathbb R_+^2)} }^2\\
 &\qquad+\sum_{ m\geq 3 } \inner{\sum_{1\leq j\leq 2}   \frac{\rho^{m-1}}{ (m-3)!}\norm{ \comii{x_1}^\ell e^{a y^2}\partial_1^m\partial_y^j u}_{L^2(\mathbb R_+^2)} }^2.
 \end{split}
\end{equation*}

The following energy estimate is a key part to prove Proposition \ref{propapri}.
\begin{prop} \label{prenes}
 Let $u^\eps\in L^\infty\inner{[0,T_\eps]; X_{\rho_0, a}}$	be a solution to the initial-boundary problem \eqref{linsys+} and $0<\rho(t)\le \min\set{\rho_0/2, \tau/3}$ a smooth function. Then for any $ t\in[0, T_\eps], $ 
 \begin{equation}\label{53}
 	\begin{split}
 			&\abs{u^\eps(t)}_{X_{\rho(t),a }}^2
 		 +\int_0^{T_\eps} \abs{u^\eps(t)}_{Z_{\rho(t), a }}^2dt -\int_0^T \rho'(t) \abs{u^\eps(t)}_{Y_{\rho(t), a }}^2 dt\\
 		\leq &\abs{u_0}_{X_{\rho_0,a_0} }^2+ C \int_0^{T_\eps} \inner{\abs{\rho'(t)}\rho(t)^{-2}\abs{u^\eps(t)}_{X_{\rho(t), a} }+\abs{u^\eps(t)}_{X_{\rho(t), a} }^2+ \abs{u^\eps(t)}_{X_{\rho(t), a}}^4}dt\\
 		&+C \int_0^{T_\eps}  \abs{u^\eps(t)}_{Z_{\rho(t), a}} \abs{u^\eps(t)}_{Y_{\rho(t), a} }^2dt\, .
 	\end{split}
 \end{equation}
 \end{prop}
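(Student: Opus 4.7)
The plan is a weighted-in-space, summed-in-derivative energy estimate in the analytic scale $X_{\rho(t),a}$, calibrated so that the parabolic dissipation reproduces the $Z_{\rho,a}$-term on the left of \eqref{53} and the shrinking of the analytic radius produces the $Y_{\rho,a}$-term. For each $(m,j)$ with $m\ge 0$ and $j\in\{0,1\}$, I will apply $\partial_1^m\partial_y^j$ to the equation in \eqref{linsys+}, multiply by $\comii{x_1}^{2\ell}e^{2ay^2}\partial_1^m\partial_y^j u^\eps$, integrate over $\RR^2_+$, and then sum the resulting identities with the weights $\bigl[\rho(t)^{m-1}/(m-3)!\bigr]^2$ for $m\ge 3$ (and unit weights for $m\le 2$). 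Integration by parts against $-\partial_y^2 u^\eps$ produces $\|\comii{x_1}^\ell e^{ay^2}\partial_1^m\partial_y^{j+1} u^\eps\|_{L^2}^2$, whose weighted sum is exactly $|u^\eps|_{Z_{\rho,a}}^2$; the boundary contributions at $y=0$ are controlled by the prescribed data $\partial_y u^\eps|_{y=0}=-\overline{\partial_3 u^{I,0}_3}$, and the lower-order contributions produced by differentiating the Gaussian weight are absorbed. The artificial viscosity $-\eps\partial_1^2$ yields a nonnegative term that I simply discard. On differentiating $|u^\eps(t)|_{X_{\rho(t),a}}^2$ in $t$, the chain rule on $\rho(t)^{m-1}/(m-3)!$ generates exactly $2\rho'(t)|u^\eps|_{Y_{\rho(t),a}}^2$, since the definition \reff{Yrho} of the $Y$-norm inserts precisely the factor $(m-1)^{1/2}\rho^{-1/2}$ needed to match $\tfrac{d}{dt}[\rho^{m-1}]^2 = 2(m-1)\rho^{-1}\rho'\,[\rho^{m-1}]^2$.

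The technical heart of the argument lies in estimating the nonlinear contributions, in particular in controlling the one-derivative loss in the nonlocal factor $v^\eps = -\int_{-\infty}^{x_1}\partial_y u^\eps(t,z,y)\,dz$ appearing in $v^\eps\partial_1 u^\eps$. Expanding $\partial_1^m\partial_y^j\bigl(v^\eps\partial_1 u^\eps+(u^\eps-u^\eps|_{y=0})\partial_y u^\eps\bigr)$ by Leibniz, the most dangerous summand has all derivatives on the second factor and must be treated by a Prandtl-type cancellation: using the divergence-free relation $\partial_y v^\eps=-\partial_1 u^\eps$ built into \eqref{relauv}, I integrate by parts in $x_1$ or in $y$ so as to trade the top-order derivative on $u^\eps$ for a derivative on $v^\eps$, without net loss. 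After summation in $m$ with the analytic weights, and thanks to the precise algebraic relation tying $X$, $Y$, $Z$ weights (two factors receive a $Y$-weight, the third a $Z$-weight), this contribution is bounded by $C|u^\eps|_{Z_{\rho,a}}|u^\eps|_{Y_{\rho,a}}^2$, producing the last term on the right-hand side of \eqref{53}. All subprincipal summands, where the bulk of derivatives land on the ``small'' factor, are handled by Sobolev embedding in $x_1$ combined with the fact that $X_{\rho,a}$ simultaneously controls $u^\eps$ and $\partial_y u^\eps$ in $L^2_y$; these contribute to the $|u^\eps|_{X_{\rho,a}}^2+|u^\eps|_{X_{\rho,a}}^4$ terms of the inequality.

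The couplings with the traces of the outer flow $\overline{u^{I,0}_1}$, $\overline{\partial_3 u^{I,0}_3}$ and $\overline{\partial_1\partial_3 u^{I,0}_3}$ are bounded by Cauchy-product arguments against $|u^\eps|_{X_{\rho,a}}$, using the analytic estimates \reff{eseuler}--\reff{dt} supplied by Theorem \ref{th:E2D}; the restriction $\rho(t)\le\tau/3$ ensures the analytic series converge with constants depending only on $\tau$ and the $\mathcal A_\tau$-norms of the Euler flow. The factors of $y$ in $\overline{\partial_3 u^{I,0}_3}\,y\partial_y u^\eps$ and $y\overline{\partial_1\partial_3 u^{I,0}_3}\,v^\eps$ are absorbed via $ye^{ay^2}\lesssim a^{-1/2}\partial_y(e^{ay^2})$ followed by one integration by parts, which transfers each $y$ into a $\partial_y$ controlled by the $Z$-norm with an arbitrarily small prefactor. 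The remainder $|\rho'(t)|\rho(t)^{-2}|u^\eps|_{X_{\rho,a}}$ on the right of \eqref{53} arises when one isolates the top-$m$ part of a weight-commutator from the $Y$-norm contribution. Integrating in $t$ on $[0,T_\eps]$ and collecting all contributions delivers \eqref{53}. The decisive and most delicate step is the Prandtl-type cancellation: verifying that the loss of the $x_1$-derivative in $v^\eps$ is recovered with the correct combinatorial weights, so that the right-hand side carries only the cubic $|u^\eps|_Z|u^\eps|_Y^2$ rather than a larger product that could not be absorbed by the $Z$-dissipation and $Y$-norm already present on the left, is what ultimately enables the uniform-in-$\eps$ a priori bound sought in Proposition \ref{propapri}.
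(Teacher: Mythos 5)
Your overall scheme is the same as the paper's: weighted energy estimates on $\partial_1^m u^\eps$ and $\partial_1^m\partial_y u^\eps$ with the weight $\comii{x_1}^\ell e^{ay^2}$, the dissipation reassembled into the $Z_{\rho,a}$-norm, the shrinking radius $\rho(t)$ producing the $Y_{\rho,a}$-norm, the top-order transport term handled by integration by parts using $\partial_1 v^\eps=-\partial_y u^\eps$, the Leibniz tails summed by discrete Young's inequality with the $Y$--$Y$--$Z$ distribution of weights, and the Euler traces entering through \reff{eseuler}--\reff{dt}. The genuine gap is in your treatment of the vertical weight. Commuting $-\partial_y^2$ with $e^{ay^2}$ produces exactly $4a^2\norm{y\varphi_m}_{L^2}^2$ (this is the term $\mathcal R_1^{m}$ in the paper, with $\varphi_m$ as in \reff{defver}), and further multiples of $\norm{y\varphi_m}_{L^2}^2$ with constants depending on the Euler norms come from $\mathcal R_2^{m}$, $\mathcal R_4^{m}$, $\mathcal R_8^{m}$ and from converting $\norm{\partial_y\varphi_m}_{L^2}^2$ into the dissipation $\norm{\comii{x_1}^\ell e^{ay^2}\partial_1^m\partial_y^2u^\eps}_{L^2}^2$. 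Your mechanism --- write $ye^{ay^2}$ as a multiple of $\partial_y(e^{ay^2})$, integrate by parts once, and absorb into the $Z$-norm ``with an arbitrarily small prefactor'' --- does not work: that integration by parts yields the Hardy-type bound $\norm{y\varphi_m}_{L^2}\le \frac{1}{2a}\norm{\comii{x_1}^\ell e^{ay^2}\partial_1^m\partial_y^2u^\eps}_{L^2}$ with a \emph{fixed} constant, so the single commutator term $4a^2\norm{y\varphi_m}_{L^2}^2$ already converts into a full-size multiple of the $Z$-contribution (the product of the commutator constant $4a^2$ and the Hardy constant $(2a)^{-2}$ equals one), which cannot be absorbed by the portion of the dissipation that survives the weight conversion, and the Euler-dependent constants only make this worse; no choice of a constant $a$ improves the balance. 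The device you are missing, and which the paper relies on, is to let the Gaussian amplitude decrease in time, $a(t)=a_0-(2a_0^2+C_0)t$: the resulting good term $-a'(t)\norm{y\varphi_m}_{L^2}^2$ on the left absorbs every $\norm{y\varphi_m}_{L^2}^2$ contribution (Corollary \ref{cor0222} and Lemma \ref{lem0222}), and this is precisely why the well-posedness statements only give some $a<a_0$. With $a$ fixed, as in your write-up, the estimate does not close.

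A second, smaller gap concerns the boundary terms. In the estimate for $\omega=\partial_y u^\eps$, integrating $-\partial_y^2$ by parts leaves the term $\int_{\mathbb R_{x_1}}\comii{x_1}^{2\ell}\inner{\partial_1^m\partial_y^2u^\eps}\inner{\partial_1^m\partial_yu^\eps}\big|_{y=0}\,dx_1$: the Neumann data of \reff{linsys+} give the second factor, but the trace of $\partial_y^2u^\eps$ at $y=0$ is \emph{not} prescribed and is not controlled by the $X$-, $Y$- or $Z$-norms through any trace inequality (that would require $\partial_y^3u^\eps$). The paper evaluates it by restricting the equation to $y=0$, which brings in $\partial_tu^\eps(t,x_1,0)$ and forces one to pull out an exact time derivative and to estimate the remainders through \reff{dt} (Lemma \ref{lowleft}). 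Your assertion that the boundary contributions ``are controlled by the prescribed data'' passes over this step, which is where the time regularity of the outer Euler flow genuinely enters the proof.
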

 
The proof of the proposition above is postponed to the next section, and we now use it to prove Proposition \ref{propapri}.

\begin{proof}[{\bf Proof of Proposition \ref{propapri}}]
	
  To simplify the notations we will use $C$ in the following discussion  to denote different suitable constants, which depend only on $a_0, \rho_0,  \tau, \norm{u^{I,0}_3}_{\mathcal A_\tau}$ and $\norm{u^{I,0}_1}_{\mathcal A_\tau} $,  but independent of  $\eps$.   
 
 Let  $\rho_\eps$ be   the solution to the differential equation:   
 \begin{equation}\label{eq+}
 \left\{
 \begin{split}
	\rho_\eps'(t)=-\abs{u^\eps(t)}_{Z_{\rho_\eps (t), a}}, \\
	\rho|_{t=0}=\min\set{\rho_0/2, \tau/3},
\end{split}
\right.
\end{equation}
or equivalently 
\begin{eqnarray}\label{des++}
	\rho_\eps(t)=\min\set{\rho_0/2, \tau/3}-\int_0^t \abs{u^\eps(s)}_{Z_{\rho_\eps(s), a}} ds.
\end{eqnarray}
Observe, for any $0<\rho,  \,\tilde\rho \leq \rho_0/2,$  we have
\begin{eqnarray*}
\abs{ \abs{u^\eps}_{Z_{\rho, a}}  -  \abs{u^\eps}_{Z_{\tilde\rho, a}}   }\leq C  \abs{u^\eps}_{Z_{\rho_0, a}} \abs{\rho-\tilde\rho},
\end{eqnarray*}
 which along with  Cauchy-Lipschitz  Theorem  gives the existence of $\rho_\eps$ to equation \eqref{eq+}. 
 Now  choosing  $\rho(t)=\rho_\eps(t)$ in \eqref{53}   and  observing \eqref{eq+}, 
we can rewrite  \eqref{53}  as
\begin{eqnarray*}
	&&\abs{u^\eps(t)}_{X_{\rho_\eps,a} }^2+\int_0^{T_\eps} \abs{u^\eps(t)}_{Z_{\rho_\eps,a} }^2 dt \\
	&\leq&\abs{u_0}_{X_{\rho_0,a_0} }^2+ C \int_0^{T_\eps} \inner{\abs{\rho_\eps'(t)}\rho_\eps^{-2}\abs{u^\eps}_{X_{\rho,a} }+\abs{u^\eps}_{X_{\rho_\eps,a} }^2+ \abs{u^\eps}_{X_{\rho_\eps,a}}^4}dt.
\end{eqnarray*}
Thus,  using \eqref{eq+}, 
\begin{equation}\label{Eq+++}
\begin{split}
	&\abs{u^\eps(t)}_{X_{\rho,a} }^2+\frac{1}{2}\int_0^{T_\eps} \abs{u^\eps(t)}_{Z_{\rho_\eps,a} }^2 dt \\
	\leq&  \abs{u_0}_{X_{\rho_0,a_0} }^2+ C \int_0^{T_\eps} \inner{ \rho_\eps^{-4}\abs{u^\eps}_{X_{\rho_\eps,a} }^2+\abs{u^\eps}_{X_{\rho_\eps,a} }^2 +\abs{u^\eps}_{X_{\rho_\eps,a}}^4}dt.
	\end{split}
\end{equation}
In view of \eqref{des++}  for $T_\eps$ be small sufficiently  we have 
\begin{eqnarray*}
	\forall~t\in[0,T_\eps], \quad \rho_\eps(t)\geq \frac{1}{8}\min\set{\rho_0,\tau/3}, 
\end{eqnarray*}
and thus  it follows from \eqref{Eq+++} that, for any $t\in[0,T_\eps], $
\begin{eqnarray*}
	 \abs{u^\eps(t)}_{X_{\rho,a} }^2+\frac{1}{2}\int_0^{T_\eps} \abs{u^\eps(t)}_{Z_{\rho_\eps,a} }^2 dt \leq  \abs{u_0}_{X_{\rho_0,a_0} }^2+ C \int_0^{T_\eps} \inner{  \abs{u^\eps}_{X_{\rho_\eps,a} }^2 +\abs{u^\eps}_{X_{\rho_\eps,a}}^4}dt,
\end{eqnarray*}
with $C$ depending only on $a_0, \rho_0,  \tau, \norm{u^{I,0}_3}_{\mathcal A_\tau}$ and $\norm{u^{I,0}_1}_{\mathcal A_\tau} $, but independent of $\eps$.  Thus  by general Gronwall  inequality, we conclude 
\begin{eqnarray}\label{ueps}
	 \abs{u^\eps(t)}_{X_{\rho_\eps,a} }^2\leq C  \abs{u_0}_{X_{\rho_0,a_0} }^2,
\end{eqnarray}
and  
\begin{eqnarray*}
	\int_0^{T_\eps} \abs{u^\eps(t)}_{Z_{\rho_\eps,a} }^2 dt \leq 3\abs{u_0}_{X_{\rho_0,a_0} }^2+\abs{u_0}_{X_{\rho_0,a_0} }^4.
\end{eqnarray*}
As a result, in view of \eqref{des++} we see
\begin{eqnarray*}
	\rho_\eps(t)&=&\min\set{\rho_0/2, \tau/3}-\int_0^t \abs{u^\eps(s)}_{Z_{\rho_\eps(s), a}} ds\\
	&\geq& \min\set{\rho_0/2, \tau/3}-t^{1/2}\inner{\int_0^{T_\eps} \abs{u^\eps(t)}_{Z_{\rho_\eps,a} }^2 dt}^{1/2}\\
	&\geq& \min\set{\rho_0/2, \tau/3}-t^{1/2}\inner{2\abs{u_0}_{X_{\rho_0,a_0} }^2+\abs{u_0}_{X_{\rho_0,a_0} }^4}^{1/2}.
\end{eqnarray*}
So  if we choose  $T_*$ such that 
\begin{eqnarray}\label{tstar}
T_*=	4^{-1}\inner{3\abs{u_0}_{X_{\rho_0,a_0} }^2+\abs{u_0}_{X_{\rho_0,a_0} }^4} ^{-1}\Big(\min\set{\rho_0/2, \tau/3}\Big)^2
\end{eqnarray}
Then      
\begin{eqnarray*}
	\forall~t\in [0,T_\eps] \subset [0,T_*], \quad \rho_\eps(t) \geq  \rho_*\stackrel{\rm def}{=} \frac{1}{4}\min\set{\rho_0, \tau/3}.
\end{eqnarray*}
By \eqref{ueps}, it follows that 
\begin{eqnarray*}
	\forall~t\in [0,T_\eps] \subset [0,T_*] ,\quad  \abs{u^\eps(t)}_{X_{\rho_*,a} }^2\leq  C\abs{u_0}_{X_{\rho_0,a_0} }^2.
\end{eqnarray*}
This completes the proof of Proposition \ref{propapri}.
\end{proof}

\begin{proof}[{\bf Completion of the proof of Theorem \ref{th41}}] Due to the uniform estimate \eqref{uniest}, we can extend the lifespan $T_\eps$ to $T_*$ with $T_*$ defined in \eqref{tstar}, following the standard bootstrap arguments.  Thus we see for any $\eps>0$ the system \eqref{linsys+} admits a unique solution $u^\eps\in   L^\infty\inner{[0,T_*]; X_{\rho_*, a}} $ such that
\begin{eqnarray*}
	\norm{u^\eps}_{L^\infty\inner{[0,T_\eps]; X_{\rho_*,a} }} \leq  C \abs{u_0}_{X_{\rho_0,a_0} },
\end{eqnarray*}
 with $T_*, \rho_*, a, C$ independent of $\eps.$  Thus letting $\eps\rightarrow 0,$ the compactness arguments show that the limit $u\in  L^\infty\inner{[0,T_*]; X_{\rho_*, a}}$   solves the system \eqref{linsys}, proving Theorem \ref{th41}.
\end{proof}

\noindent  
{\bf Proof of Theorem \ref{th:Prandtl} . } Taking 
$$
u=u^{B,1}_3,\quad v= - \int_{-\infty}^{x_1}\partial_y    u^{B,1}_3(t, z,y)dz,
$$ 
 the system \eqref{linsys} implies that the function 
$$
\Ucal^{p,1}_3 = u^{B,1}_3 + \overline{u^{I,1}_3} + y \overline{\partial_3 u^{I,0}_3}
$$
satisfies 
$$
    \left\{
    \begin{aligned}
    	&\partial_t \Ucal^{p,1}_3 - \partial_y^2 \Ucal^{p,1}_3 + \Ucal^{p,0}_1 \partial_1 \Ucal^{p,1}_3 + \Ucal^{p,1}_3 \partial_y \Ucal^{p,1}_3 + \overline{\partial_3 p^{I,1}} + y\overline{\partial_3^2 p^{I,0}} = 0,\\
    	 &\partial_y \Ucal^{p,1}_3(t,x_1,0) = 0,\\
    	&\Ucal^{p,1}_3(0,x_1,y) = u^{B,1}_{0,3} (x_1,y) + \overline{u^{I,1}_{0,3}}(x_1) + y\overline{\dd_3u^{I,0}_{0,3}}(x_1)\, ,
    \end{aligned}
	\right.
$$
with
$$
\Ucal^{p,0}_1= - \int_{-\infty}^{x_1}\partial_y    u^{B,1}_3(t, z,y)dz+ \overline{u^{I,0}_1}\,.
$$
So we   need to check  the boundary condition 
 \begin{equation}\label{bounadry3}
	\Ucal^{p,1}_3|_{y=0}= 	u^{B,1}_3(t, x_1, 0)+\overline{u^{I,1}_3}(t, x_1)=0.\,\,    
 	\end{equation}
For this purpose, we first use Theorem \ref{th41} to determine $u^{B,1}_3$, then use Theorem 	\ref{th:E2Dlin} to solve the linearized Euler system   \eqref{eq:Ordre0.5I} with the boundary condition 
$$
{u^{I ,1}_3}|_{x_3=0}=-u^{B,1}_3(t, x_1, 0)\,.   
$$
For the component $ \Ucal^{p,0}_1$, using the divergence-free properties of $u^{I, 0}$, we have firstly 
\begin{eqnarray*}
 \Ucal^{p, 0}_1|_{y=0}&=&- \int_{-\infty}^{x_1}\partial_y    u^{B,1}_3(t, z, 0)dz+ 
 \overline{u^{I,0}_1}(t, x_1)\\
 &=& \int_{-\infty}^{x_1}\overline{\partial_3u^{I, 0}_3}(t, z)dz+ \overline{u^{I,0}_1}(t, x_1)\\
 &=&- \int_{-\infty}^{x_1}\overline{\partial_1u^{I, 0}_1}(t, z)dz+ \overline{u^{I,0}_1}(t, x_1)\\
 &=&0.
\end{eqnarray*}
On the other hand, since $ u^{B,1}_3\in L^\infty\inner{[0,T_*]; X_{\rho_*, a}}$, we have  the   limit 
\begin{eqnarray*}
 \lim_{y\to +\infty}\Ucal^{p, 0}_1(t, x_1, y)&=&-  \lim_{y\to +\infty}\int_{-\infty}^{x_1}\partial_y    u^{B,1}_3(t, z, y)dz+ 
 \overline{u^{I,0}_1}(t, x_1)\\
 &=&\overline{u^{I,0}_1}(t, x_1).
\end{eqnarray*}
So the boundary conditions for $ \Ucal^{p,0}_1$ are satisfied.  
Finally, for the pressure term of the  first equation in \eqref{eq:PrRotE2}, once we obtain $\Ucal^{p, 1}_3$,   $\Ucal^{p, 0}_1$  and $\overline{\partial_1 p^{I,0}}$, it is enough to put
$$
 \partial_1 p^{B,0}=
-\partial_t \Ucal^{p,0}_1 + \partial_y^2 \Ucal^{p,0}_1 - \Ucal^{p,0}_1 \partial_1 \Ucal^{p,0}_1 - \Ucal^{p,1}_3 \partial_y \Ucal^{p,0}_1  - \overline{\partial_1 p^{I,0}} .
$$ 
We  then complete the proof of Theorem \ref{th:Prandtl}.

\section{Uniform energy estimates}
\label{section4}
In this  section we proceed  through the following lemmas to prove Proposition \ref{prenes}.      To simplify the notations  in the following proof we will write $u$ instead of $u^\eps,$  omitting the  superscript $\eps$,   and use $C$   in the following discussion  to denote different suitable constants, which depend only on $a_0, \rho_0,  \tau, \norm{u^{I,0}_3}_{\mathcal A_\tau}$ and $\norm{u^{I,0}_1}_{\mathcal A_\tau} $.

In view of the definition of $\abs{\cdot}_{X_{\rho,a}}$	it suffices to estimate terms 
\begin{eqnarray}\label{est1or}
	 \sum_{ m\leq 2 } \inner{\norm{\comii{x_1}^\ell e^{a y^2}\partial_1^m u}_{L^2(\mathbb R_+^2)} }+\sum_{ m\geq 3 } \inner{\frac{\rho^{m-1}}{ (m-3)!}\norm{\comii{x_1}^\ell e^{a y^2}\partial_1^m u}_{L^2(\mathbb R_+^2)}}
\end{eqnarray}
and
\begin{eqnarray}\label{est2or}
	 \sum_{ m\leq 2 } \inner{\norm{\comii{x_1}^\ell e^{a y^2}\partial_1^m\partial_y u}_{L^2(\mathbb R_+^2)} }+\sum_{ m\geq 3 } \inner{\frac{\rho^{m-1}}{ (m-3)!}\norm{\comii{x_1}^\ell e^{a y^2}\partial_1^m\partial_y u}_{L^2(\mathbb R_+^2)} }
\end{eqnarray}
Here we first treat the terms in  \eqref{est2or},  and the ones in  \eqref{est1or} can be deduced similarly with simpler arguments.   To do so,        we use the notation $\omega=\partial_y u$.  Then it follows from \eqref{linsys} that 
 \begin{equation}
	\label{linsysomega}
		\left\{
	\begin{aligned}
		&\pare{\partial_t - \partial_y^2 + \overline{\partial_3 u^{I,0}_3} y\partial_y} \omega  + \pare{  v+ \overline{u^{I,0}_1}} \partial_1 \omega\\
		&\qquad  \qquad + \pare{ u -   u(t,x_1,0)} \partial_y \omega+ 2\overline{\partial_3 u^{I,0}_3} \omega+ \pare{\partial_1   u(t,x_1,0) + y\overline{\partial_1\partial_3 u^{I,0}_3}}  \partial_yv\\
		&\qquad  \qquad +\inner{\partial_y v} \partial_1u+ \omega^2 +\overline{\partial_1\partial_3 u^{I,0}_3}v= 0,\\
		&\omega|_{y=0} = \overline{\partial_1u^{I,0}_1}(t,x_1),\\
				&\omega|_{t=0}= \partial_y u^{B,1}_{3, 0}.
	\end{aligned}
	\right.
\end{equation}
Thus  the function, defined by  
 \begin{eqnarray}\label{defver}
 \varphi_m =\comii{x_1}^\ell e^{a  y^2}\partial_1^m\omega (t)=\comii{x_1}^\ell e^{a  y^2}\partial_1^m\partial_y u (t),
 \end{eqnarray}
  solves the equation 
  \begin{equation*}
  	\begin{cases}
  		\pare{\partial_t - \partial_y^2 + \overline{\partial_3 u^{I,0}_3} y\partial_y} \varphi_m -a'(t)y^2 \varphi_m + \pare{v + \overline{u^{I,0}_1}} \partial_1 \varphi_m + \pare{u  - u (t,x_1,0)} \partial_y \varphi_m =\mathcal R^{m}(t),\\[10pt]
  	  \varphi_m \big|_{y=0}=\comii{x_1}^\ell\overline{\partial_1^{m+1} u^{I,0}_1}(t,x_1),\\[10pt]
  		\varphi_m \big|_{t=0}=\comii{x_1}^\ell e^{a  y^2}\partial_1^m \partial_y u^{B,1}_{3, 0},
  	\end{cases}
  \end{equation*}
  where  
\begin{eqnarray*}
\mathcal R^{m}(t)=\sum_{j=1}^{11}\mathcal R_j^{m}(t)
\end{eqnarray*}
with  
\begin{eqnarray*}
\mathcal R_1^{m}&=&-4a y\partial_y\varphi_m+4a^2 y^2\varphi_m -2a\varphi_m, \\
\mathcal R_2^{m}&=&2ay^2 \overline{\partial_3u^{I,0}_3} \varphi_m+2ay\inner{u-u(t,x_1,0)}\varphi_m,  \\
\mathcal R_3^{m}&=&\inner{\partial_1 \comii{x_1}^\ell }e^{a y^2}\inner{ v+\overline{u^{I,0}_1}}  \partial_1^{m} \omega, \\
\mathcal R_4^{m}&=&
- \sum_{k=1}^m {m\choose k} \comii{x_1}^\ell e^{a y^2}\inner{\partial_1^k \overline{\partial_3u^{I,0}_3}} y\partial_y\partial_1^{m-k}\omega,   \\
\mathcal R_5^{m}&=& - \sum_{k=1}^m {m\choose k} \comii{x_1}^\ell e^{a y^2}\inner{\partial_1^k v+\partial_1^k \overline{u^{I,0}_1}} \partial_1^{m-k+1}\omega, \\
\mathcal R_6^{m} &=& - \sum_{k=1}^m{m\choose k}  \comii{x_1}^\ell e^{a y^2}\inner{\partial_1^k u -\partial_1^k u (t,x_1,0)} \partial_1^{m-k} \partial_y\omega,\\
\mathcal R_7^{m} &=& - \sum_{k=0}^m {m\choose k} \comii{x_1}^\ell e^{a y^2}\inner{\partial_1^k  \overline{\partial_3u^{I,0}_3}} \partial_1^{m-k}\omega,\\
\mathcal R_8^{m}&=& - \sum_{k=0}^m{m\choose k} \comii{x_1}^\ell e^{a y^2} \inner{\partial_1^{k+1} u(t,x_1,0)+y \overline{\partial_1^{k+1} \partial_3u^{I,0}_3}} \partial_1^{m-k}\partial_yv,\\
\mathcal R_9^{m}&=& - \sum_{k=0}^m{m\choose k} \comii{x_1}^\ell e^{a y^2} \inner{\partial_1^{k+1} u} \partial_1^{m-k}\partial_yv,\\
\mathcal R_{10}^{m}&=& - \sum_{k=0}^m{m\choose k} \comii{x_1}^\ell e^{a y^2} \inner{\inner{\partial_1^{k} \omega} \partial_1^{m-k}\omega+ \overline{\partial_1^{k+1}\partial_3 u^{I,0}_3}\partial_1^{m-k}v}.
\end{eqnarray*}
From the first equation in \eqref{linsysomega},  it follows that
\begin{align}\label{maiequ}
	&  \inner{\pare{\partial_t - \partial_y^2 + \overline{\partial_3 u^{I,0}_3} y\partial_y} \varphi_m   , ~  \varphi_m }_{L^2(\mathbb R_+^2)} - \inner{a'(t)y^2 \varphi_m ,~\varphi_m }_{L^2(\mathbb R_+^2)}\nonumber  \\
	&\qquad\qquad+\inner{  \pare{v+ \overline{u^{I,0}_1}} \partial_1\varphi_m  + \pare{u  - u (t,x_1,0)} \partial_y  \varphi_m (t), ~  \varphi_m }_{L^2(\mathbb R_+^2)}  \\
	=& \inner{   \mathcal R^{m}(t), ~\varphi_m (t)}_{L^2(\mathbb R_+^2)},\notag
\end{align}
with $\mathcal R^{m}$     given above.

  In the following lemmas,  let $ 0<a(t)<a_0$   to be determined later, and let $0<\rho=\rho(t)\leq \min\set{\rho_0/2,\tau/3}$ be an arbitrary smooth function of $t.$  
 
\begin{lem} \label{R1+2}    A constants $C$ exists such that for any $N\geq 3$,  
	\begin{eqnarray*}
		 \sum_{m=3}^N \inner{ \frac{\rho^{m-1}}{(m-3)!}}^2  \inner{  \mathcal R_1^{m}, ~\varphi_m}_{L^2(\mathbb R_+^2)} 
	 \leq  C \sum_{m=3}^N \inner{ \frac{\rho^{m-1}}{(m-3)!}}^2  \norm{y\varphi_m}_{L^2(\mathbb R_+^2)}^2,
	\end{eqnarray*}
	and 
	\begin{eqnarray*}
		 \sum_{m=3}^N \inner{ \frac{\rho^{m-1}}{(m-3)!}}^2  \inner{   \mathcal R_2^{m}, ~\varphi_m}_{L^2(\mathbb R_+^2)}
	 \leq  C \sum_{m=3}^N \inner{ \frac{\rho^{m-1}}{(m-3)!}}^2  \norm{y\varphi_m}_{L^2(\mathbb R_+^2)}^2+ C  \abs{u}_{X_{\rho,a}}^4.
	\end{eqnarray*}
\end{lem}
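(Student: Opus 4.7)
The plan is to split each inner product into elementary pieces, use integration by parts in $y$ for the linear part ($\mathcal R_1^m$), and combine the fundamental theorem of calculus with Cauchy–Schwarz against the built-in exponential weight $e^{ay^2}$ for the nonlinear part ($\mathcal R_2^m$).

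For $\mathcal R_1^m$, I would first write
\[
(\mathcal R_1^m,\varphi_m)_{L^2(\mathbb R_+^2)}=-4a(y\partial_y\varphi_m,\varphi_m)_{L^2}+4a^2\|y\varphi_m\|_{L^2}^2-2a\|\varphi_m\|_{L^2}^2,
\]
and integrate by parts in $y$ on the first inner product, rewriting it as $-2a\int y\,\partial_y(\varphi_m^2)\,dx_1dy=2a\|\varphi_m\|_{L^2}^2$; the boundary term at $y=0$ vanishes thanks to the factor $y$ and the one at $y=+\infty$ vanishes since membership of $\varphi_m$ in the weighted space $X_{\rho,a}$ forces the appropriate decay. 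The $\pm 2a\|\varphi_m\|_{L^2}^2$ contributions cancel exactly, leaving the clean identity $(\mathcal R_1^m,\varphi_m)_{L^2}=4a^2\|y\varphi_m\|_{L^2}^2$, from which the first claimed bound follows immediately upon multiplying by $(\rho^{m-1}/(m-3)!)^2$ and summing.

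For $\mathcal R_2^m$, the piece containing $\overline{\partial_3u^{I,0}_3}$ is the easy one: since $u^{I,0}_3\in\mathcal A_\tau$, the estimate \eqref{eseuler} provides $\|\overline{\partial_3u^{I,0}_3}\|_{L^\infty_{x_1}}\le C$, whence $|2a\int y^2\overline{\partial_3u^{I,0}_3}\varphi_m^2|\le C\|y\varphi_m\|_{L^2}^2$. For the genuinely nonlinear piece $2a\int y(u-u(t,x_1,0))\varphi_m^2$, I would write $u(t,x_1,y)-u(t,x_1,0)=\int_0^y\omega(t,x_1,s)\,ds$ and apply Cauchy–Schwarz against the weight $e^{\pm as^2}$ to get
\[
|u(t,x_1,y)-u(t,x_1,0)|\le\Big(\int_0^\infty e^{-2as^2}\,ds\Big)^{1/2}\|e^{as^2}\omega(t,x_1,\cdot)\|_{L^2_s}\le C\|e^{as^2}\omega(t,x_1,\cdot)\|_{L^2_s},
\]
a bound that is uniform in $y$ because the Gaussian weight absorbs the usual $\sqrt{y}$ factor one would otherwise pick up. Combining this pointwise bound with $\int y\varphi_m^2\,dy\le\|\varphi_m\|_{L^2_y}\|y\varphi_m\|_{L^2_y}$ (Cauchy–Schwarz in $y$) and pulling the $\omega$-factor out in $L^\infty_{x_1}$ through a weighted 1D Sobolev embedding on $\mathbb R_{x_1}$ (valid because $\ell>1/2$), I obtain
\[
\Big|2a\int y(u-u(t,x_1,0))\varphi_m^2\Big|\le C\,\|u\|_{X_{\rho,a}}\,\|\varphi_m\|_{L^2}\,\|y\varphi_m\|_{L^2}.
\]

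A single Young inequality splits the right-hand side as $C\|y\varphi_m\|_{L^2}^2+C\|u\|_{X_{\rho,a}}^2\|\varphi_m\|_{L^2}^2$. Summing against $(\rho^{m-1}/(m-3)!)^2$ and recognising that $\sum_m(\rho^{m-1}/(m-3)!)^2\|\varphi_m\|_{L^2}^2$ is precisely the $j=1$ summand in the definition of $|u|_{X_{\rho,a}}^2$, I recover $C\sum_m(\rho^{m-1}/(m-3)!)^2\|y\varphi_m\|_{L^2}^2+C|u|_{X_{\rho,a}}^4$, which is exactly the second claim. The step I expect to require the most care is the weighted Sobolev embedding that controls $\|e^{ay^2}\omega\|_{L^\infty_{x_1}(L^2_y)}$ by $|u|_{X_{\rho,a}}$ while respecting the tangential weight $\comii{x_1}^\ell$; this is standard once $\ell>1/2$ is exploited, but must be verified uniformly in $y$ so that the resulting constant depends only on $a_0$, $\rho_0$, $\tau$, $\|u^{I,0}_3\|_{\mathcal A_\tau}$ and $\|u^{I,0}_1\|_{\mathcal A_\tau}$.
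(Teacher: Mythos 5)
Your proof is correct and follows essentially the paper's own argument: integration by parts in $y$ (with the boundary terms vanishing as you say) gives the exact identity $\inner{\mathcal R_1^{m},\varphi_m}_{L^2(\mathbb R_+^2)}=4a^2\norm{y\varphi_m}_{L^2(\mathbb R_+^2)}^2$, and for $\mathcal R_2^{m}$ you bound the coefficients of $\varphi_m^2$ and absorb via Young's inequality into $\norm{y\varphi_m}_{L^2}^2$ plus a term controlled by $\abs{u}_{X_{\rho,a}}^4$, exactly as in the paper. The only (harmless) variation is in the nonlinear piece: you control $\abs{u(t,x_1,y)-u(t,x_1,0)}$ by the fundamental theorem of calculus plus Gaussian-weighted Cauchy--Schwarz in $y$ and a tangential Sobolev embedding for $\omega$, whereas the paper simply writes $\abs{u-u(t,x_1,0)}\le 2\norm{u}_{L^\infty(\mathbb R_+^2)}$ and uses the embedding $\norm{u}_{L^\infty(\mathbb R_+^2)}\lesssim \abs{u}_{X_{\rho,a}}$ together with $\sum_m (\rho^{m-1}/(m-3)!)^2\norm{\varphi_m}_{L^2}^2\lesssim\abs{u}_{X_{\rho,a}}^2$ — both routes give the same conclusion with constants depending only on the allowed quantities.
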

\begin{proof}
	We have, integrating by parts, 
	\begin{eqnarray*}
		 \inner{   \mathcal R_1^{m}, ~\varphi_m}_{L^2(\mathbb R_+^2)}=
		 \inner{ 4a^2y^2\varphi_m, ~\varphi_m}_{L^2(\mathbb R_+^2)}=  4a^2 \norm{y\varphi_m}_{L^2(\mathbb R_+^2)}^2
	\end{eqnarray*}
	Direct verification shows
	\begin{eqnarray*}
	 \inner{   \mathcal R_2^{m}, ~\varphi_m}_{L^2(\mathbb R_+^2)}\leq
	\inner{2a \norm{\overline{\partial_3u^{I,0}_3}}_{L^\infty}+ a^2} \norm{y\varphi_m}_{L^2(\mathbb R_+^2)}^2 +  4\norm{u}_{L^\infty(\mathbb R_+^2)}^2\norm{\varphi_m}_{L^2(\mathbb R_+^2)}^2.
	\end{eqnarray*}
	Observe 
	\begin{eqnarray*}
		 \norm{\overline{\partial_3u^{I,0}_3}}_{L^\infty}\leq C \norm{u^{I,0}_3}_{G_\tau}
	\end{eqnarray*}
	and
	\begin{eqnarray*}
		 \sum_{m=3}^N \inner{ \frac{\rho^{m-1}}{(m-3)!}}^2\norm{\varphi_m}_{L^2(\mathbb R_+^2)}^2\leq  \sum_{m=3}^\infty \inner{ \frac{\rho^{m-1}}{(m-3)!}}^2\norm{\varphi_m}_{L^2(\mathbb R_+^2)}^2\leq \abs{u}_{X_{\rho,a}},
	\end{eqnarray*}
	and thus  the desired results follow, completing the proof.  
\end{proof}

\begin{lem}\label{R8}
There exists a constant $C$ such that for  any   $ \rho$ with $0<\rho\leq  \tau/3$,   we have
\begin{eqnarray*}
	 \sum_{m=3}^N \inner{ \frac{\rho^{m-1}}{(m-3)!}}^2  \inner{   \mathcal R_8^{m}, ~\varphi_m}_{L^2(\mathbb R_+^2)}
	 \leq  {1\over 8}     \abs{u}_{Z_{\rho,a}}^2 +C \sum_{m=3}^N \inner{ \frac{\rho^{m-1}}{(m-3)!}}^2 \norm{y\varphi_m}_{L^2(\mathbb R_+^2)}^2+C  \abs{u}_{Z_{\rho,a}} \abs{u}_{Y_{\rho,a} }^2.   
\end{eqnarray*}
 
\end{lem}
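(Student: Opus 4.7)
\textbf{Proof plan for Lemma \ref{R8}.} The plan is to split $\mathcal R_8^m = \mathcal R_{8,A}^m + \mathcal R_{8,B}^m$, collecting in $\mathcal R_{8,A}^m$ the summands whose coefficient is the boundary trace $\partial_1^{k+1}u(t,x_1,0)$ and in $\mathcal R_{8,B}^m$ those with coefficient $y\overline{\partial_1^{k+1}\partial_3 u^{I,0}_3}$, and treat the two parts by different techniques. A key preliminary reduction, exploiting the identity \reff{relauv} (i.e.\ $\partial_1 v = -\partial_y u$), is that for $m-k\ge 1$ one has
\[
\partial_1^{m-k}\partial_y v = -\,\partial_1^{m-k-1}\partial_y^2 u,
\]
which converts horizontal derivatives of $v$ into mixed derivatives of $u$ controlled by the $Z$-norm; the degenerate case $k=m$ is handled via the primitive representation $\partial_y v = -\int_{-\infty}^{x_1}\partial_y^2 u(t,z,y)\,dz$ together with absorption of the $x_1$-integration against the $\comii{x_1}^\ell$ weight.

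For $\mathcal R_{8,A}^m$, I would bound the trace factor $\partial_1^{k+1}u(t,\cdot,0)$ via a one-dimensional trace/Sobolev inequality of the form $\norm{\comii{x_1}^\ell g(\cdot,0)}_{L^\infty_{x_1}} \le C\,\abs{g}_{X_{\rho,a}}^{1/2}\abs{g}_{Z_{\rho,a}}^{1/2}$ applied to $g=\partial_1^{k+1}u$ (with small-$k$ variants that only use $X$ and $Y$). After pairing with $\varphi_m$, Cauchy--Schwarz in $m$, and the standard analytic convolution estimate
\[
\binom{m}{k}\frac{\rho^{m-1}}{(m-3)!} \;\lesssim\; \frac{\rho^{k}}{k!}\cdot\frac{\rho^{m-k-1}}{(m-k-3)!}
\]
(with the obvious adjustments near the endpoints $k=0$ and $k=m$), the double sum rearranges into a product of $X$-, $Y$- and $Z$-norms of $u$. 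A Young split then separates an absorbable piece $\tfrac18\abs{u}_{Z_{\rho,a}}^2$ from the genuine cubic contribution $C\abs{u}_{Z_{\rho,a}}\abs{u}_{Y_{\rho,a}}^2$ appearing in the statement.

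For $\mathcal R_{8,B}^m$, the explicit factor $y$ is transferred onto $\varphi_m$, producing $\norm{y\varphi_m}_{L^2}$ on the output side, while the coefficient $\comii{x_1}^\ell\overline{\partial_1^{k+1}\partial_3 u^{I,0}_3}$ is controlled by the analyticity estimate \reff{eseuler}, which furnishes a factor of the form $C\norm{u^{I,0}_3}_{\mathcal A_\tau}(k+4)!/\tau^{k+4}$. The hypothesis $\rho\le\tau/3$ makes the resulting geometric series in $\rho/\tau$ convergent; a further Cauchy--Schwarz in $m$ yields precisely the second term $C\sum_{m}(\rho^{m-1}/(m-3)!)^2\norm{y\varphi_m}_{L^2}^2$ of the stated bound.

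The main obstacle is the combinatorial and factorial bookkeeping for $\mathcal R_{8,A}^m$: because the boundary trace already costs one $y$-derivative plus extra $x_1$-derivatives before reaching a controlled $L^2$ object, the distribution of derivatives among the three norms $X_{\rho,a}$, $Y_{\rho,a}$ and $Z_{\rho,a}$ must be arranged exactly so as to produce no terms worse than those listed in the statement. The fine split between $k$ small and $k$ close to $m$ plays the decisive role and is precisely the reason why the auxiliary tri-norm framework of Proposition \ref{prenes} was introduced.
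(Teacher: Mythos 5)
Your plan follows the paper's own route: the same splitting of $\mathcal R_8^{m}$ according to the two coefficients, the same reduction $\partial_1^{j}\partial_y v=-\partial_1^{j-1}\partial_y^2 u$ via \eqref{relauv} (with the weighted primitive for $j=0$), the same treatment of the $y\,\overline{\partial_1^{k+1}\partial_3u^{I,0}_3}$ part (transfer $y$ onto $\varphi_m$, invoke \eqref{eseuler}, geometric series from $\rho\le\tau/3$, discrete Young), and the same reliance on the $X/Y/Z$ bookkeeping. Two steps, however, would fail as literally written. First, the ``analytic convolution estimate'' $\binom{m}{k}\frac{\rho^{m-1}}{(m-3)!}\lesssim\frac{\rho^{k}}{k!}\cdot\frac{\rho^{m-k-1}}{(m-k-3)!}$ is false with a uniform constant: it amounts to $m(m-1)(m-2)\lesssim (m-k)(m-k-1)(m-k-2)$ and degrades by a factor comparable to $\bigl(m/(m-k)\bigr)^3$ whenever $m-k\ll m$, not only at the literal endpoints. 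The paper does not bound this ratio by a constant; it keeps the factor $\frac{m^3}{k^2(m-k-2)^3}$ and compensates with the extra gains $k^{-1/2}m^{-1/2}\rho$ supplied by the $(m-1)^{1/2}\rho^{-1/2}$ weights that distinguish $Y_{\rho,a}$ from $X_{\rho,a}$, after which the split $\le C\bigl(k^{-2}+(m-k-2)^{-3}\bigr)$ and Young's inequality for discrete convolutions give $C\abs{u}_{Z_{\rho,a}}\abs{u}_{Y_{\rho,a}}^2$. Your plan names the tri-norm framework but never exhibits this compensation, which is the actual content of \eqref{cru1}.

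Second, your H\"older allocation for the trace part is more expensive than the paper's exactly where the balance is tight. The paper keeps $\comii{x_1}^\ell\partial_1^{k+1}u$ in $L^\infty_y(L^2_{x_1})$ (a $Y$-quantity at index $k+1$, by \eqref{tb}) and puts the one-dimensional Sobolev embedding on the other factor, $\norm{e^{ay^2}\partial_1^{m-k}\partial_y v}_{L^2_y(L^\infty_{x_1})}\le C\norm{\comii{x_1}^\ell e^{ay^2}\partial_1^{m-k}\partial_y^2u}_{L^2}$ as in \eqref{tc+}; this choice also disposes of the degenerate term $k=m$, since $\partial_y v$ is a primitive in $x_1$ and is not in $L^2_{x_1}$, so it must be the factor measured in $L^\infty_{x_1}$. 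Your allocation (trace in $L^\infty_{x_1}$, $\partial_1^{m-k-1}\partial_y^2u$ in plain $L^2$) costs roughly half an extra $x_1$-derivative on the trace factor, pushing it to index $k+2$; in the range where $m-k$ stays bounded while $k\sim m$ this produces an uncompensated factor of order $\sqrt{m}$, whereas the paper's allocation is exactly borderline there (the $S_3$ terms). So the ``adjustments near $k=m$'' are not cosmetic: on that range you must switch back to the paper's split. A last, minor imprecision: the small $Z$-piece cannot be avoided in the $y\,\overline{\partial_1^{k+1}\partial_3u^{I,0}_3}$ part either, because after sending $y$ to $\varphi_m$ the remaining bracket is a $Z$-quantity (this is \eqref{cru2}); that part yields $\tilde\eps\abs{u}_{Z_{\rho,a}}^2+C_{\tilde\eps}\norm{y\varphi_m}_{L^2(\mathbb R_+^2)}^2$, not the $\norm{y\varphi_m}$-term alone, though this does not affect the final statement.
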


\begin{proof}   
Recall   $\mathcal R_8^{m,j}$ can be written as, for any $\tilde\eps>0,$
\begin{eqnarray*}
	  \inner{   \mathcal R_8^{m}, ~\varphi_m}_{L^2(\mathbb R_+^2)}
	&= & \inner{-\sum_{k=0}^m{m\choose k} \comii{x_1}^\ell e^{a y^2}   \inner{\partial_1^{k+1} u(x_1,0)}  \partial_1^{m-k}\partial_y v,  ~ \varphi_m}_{L^2(\mathbb R_+^2)} \\
	&&+  \inner{- \sum_{k=0}^m{m\choose k}  \comii{x_1}^\ell e^{a y^2} \inner{ y  \overline{\partial_1^{k+1}\partial_3u^{I,0}_3} }\partial_1^{m-k}\partial_y v,  ~ \varphi_m}_{L^2(\mathbb R_+^2)}\\
	&\leq &\sum_{k=0}^m{m\choose k}    \norm{\comii{x_1}^\ell  \partial_1^{k+1} u}_{L^\infty_y\inner{\mathbb R_+;~L_{x_1}^2(\mathbb R)}}\norm{ e^{a y^2} \partial_1^{m-k}\partial_y v}_{L^2_y\inner{\mathbb R_+;~L_{x_1}^\infty(\mathbb R)}} \norm{\varphi_m}_{L^2(\mathbb R_+^2)} \\
	&&+\tilde\eps\bigg[\sum_{k=0}^m{m\choose k}    \norm{\comii{x_1}^\ell  \overline{\partial_1^{k+1}\partial_3u^{I,0}_3} }_{ L_{x_1}^2(\mathbb R)}\norm{ e^{a y^2} \partial_1^{m-k}\partial_y v}_{L^2_y\inner{\mathbb R_+;~L_{x_1}^\infty(\mathbb R)}}\bigg]^2 \\
	&&+C_{\tilde\eps}  \norm{y\varphi_m}_{L^2(\mathbb R_+^2)}^2.
\end{eqnarray*}
Then it suffices to show that
\begin{equation} \label{cru1}
    \begin{split}
   &  \sum_{m=3}^N\inner{  \frac{\rho^{m-1}}{(m-3)!}}^2 \sum_{k=0}^m{m\choose k}    \norm{\comii{x_1}^\ell  \partial_1^{k+1} u}_{L^\infty_y\inner{\mathbb R_+;~L_{x_1}^2(\mathbb R)}}\norm{ e^{a y^2} \partial_1^{m-k}\partial_y v}_{L^2_y\inner{\mathbb R_+;~L_{x_1}^\infty(\mathbb R)}} \norm{\varphi_m}_{L^2(\mathbb R_+^2)}     \\
    \leq &  C  \abs{u}_{Z_{\rho,a}} \abs{u}_{Y_{\rho,a} }^2\, ,
\end{split}
\end{equation}
and
\begin{equation}\label{cru2}
    \begin{split}
   &  \sum_{m=3}^N \inner{  \frac{\rho^{m-1}}{(m-3)!}}^2  \com{ \sum_{k=0}^m{m\choose k}    \norm{\comii{x_1}^\ell  \overline{\partial_1^{k+1}\partial_3u^{I,0}_3} }_{ L_{x_1}^2(\mathbb R)}\norm{ e^{a y^2} \partial_1^{m-k}\partial_y v}_{L^2_y\inner{\mathbb R_+;~L_{x_1}^\infty(\mathbb R)}}}^2\\
    \leq & C     \abs{u}_{Z_{\rho,a}} ^2\, .
\end{split}
\end{equation}
We will proceed to prove the above estimate through the following steps. 

\noindent
{\bf  Step (a)} We begin with  several estimates to be used later in the proof.   Firstly in view of the definition of $\abs{\cdot}_{Y_{\rho,a}}$ given in \eqref{Yrho},  we may write
\begin{eqnarray*}
	\abs{u}_{Y_{\rho,a}}^2=\sum_{m=0}^{+\infty}\abs{u}_{Y_{\rho,a,m}}^2
\end{eqnarray*} 
where   $\abs{u}_{Y_{\rho,a,m}} $ is defined by 
\begin{eqnarray*}
	 \abs{u}_{Y_{\rho,a,m}} =
	 \begin{cases}
	 \sum_{0\leq j\leq 1}  \norm{\comii{x_1}^\ell e^{a y^2}\partial_1^m\partial_y^j u}_{L^2(\mathbb R_+^2)},\quad 0\leq m\leq 2\\[8pt]
	  \sum_{0\leq j\leq 1} (m-1)^{1/2}  \rho^{-/2}  \frac{\rho^{m-1}}{ (m-3)!} \norm{\comii{x_1}^\ell e^{a y^2}\partial_1^m\partial_y^j u}_{L^2(\mathbb R_+^2)},\quad m\geq 3.
	 \end{cases}
\end{eqnarray*}
Thus
\begin{eqnarray}\label{ta}
\norm{\varphi_m}_{L^2(\mathbb R_+^2)}\leq 
\begin{cases}
      \abs{u}_{Y_{\rho,a}},\quad 0\leq m\leq 2,\\[8pt]
		  \abs{u}_{Y_{\rho,a,m}} m^{-1/2}\rho^{1/2}\frac{(m-3)!}{\rho^{m-1}},\quad m\geq 3,
\end{cases}
\end{eqnarray}

Next from the relations \eqref{relauv} it follows  that  
\begin{eqnarray*}
\norm{ e^{a y^2}   \partial_y v}_{L^2_y\inner{\mathbb R_+;~L_{x_1}^\infty(\mathbb R)}} \leq  C \norm{\comii{x_1}^\ell e^{a y^2} \partial_y^2 u}_{L^2\inner{\mathbb R_+^2}}  \leq C\abs{u}_{Z_{\rho,a}},
\end{eqnarray*}
and that for $j\geq 1,$
\begin{eqnarray*}
 \norm{ e^{a y^2}\partial_1^j \partial_yv}_{L^2_y\inner{\mathbb R_+;~L_{x_1}^\infty(\mathbb R)}}&=&\norm{ e^{a y^2}  \partial_1^{j-1}\partial_y^2 u }_{L^2_y\inner{\mathbb R_+;~L_{x_1}^\infty(\mathbb R)}}\\
&\leq& C \norm{\comii{x_1}^\ell e^{a y^2}  \partial_1^{j}\partial_y^2 u}_{L^2\inner{\mathbb R_+^2}}    \\
&\leq &\left\{
\begin{array}{lll}
 C \abs{u}_{Z_{\rho,a}},\quad && 1\leq j\leq 2,\\[6pt]
  C \abs{u}_{Z_{\rho,a,j}} \frac{(j-3)!}{\rho^{j-1}},\quad &&  j\geq 3,
\end{array}
\right.
\end{eqnarray*}
where   $\abs{u}_{Z_{\rho,a, k}} $ is defined by  the relation $\abs{u}_{Z_{\rho,a}}=\sum_{k\geq 0}\abs{u}_{Z_{\rho,a, k}}^2$, so that
\begin{eqnarray*}
	 \abs{u}_{Z_{\rho,a,k}} =
	 \begin{cases}
	 \sum_{1\leq j\leq 2}  \norm{\comii{x_1}^\ell e^{a y^2}\partial_1^k\partial_y^j u}_{L^2(\mathbb R_+^2)},\quad 0\leq k\leq 2\\[8pt]
	  \sum_{1\leq j\leq 2}   \frac{\rho^{k-1}}{ (k-3)!} \norm{\comii{x_1}^\ell e^{a y^2}\partial_1^k\partial_y^j u}_{L^2(\mathbb R_+^2)},\quad k\geq 3.
	 \end{cases}
\end{eqnarray*}
Thus we conclude 
\begin{eqnarray}\label{tc+}
\norm{ e^{a y^2}\partial_1^j \partial_y v}_{L^2_y\inner{\mathbb R_+;~L_{x_1}^\infty(\mathbb R)}} \leq \left\{
\begin{array}{lll}
C \abs{u}_{Z_{\rho,a}},\quad && 0\leq j \leq 2,\\[6pt]
C \abs{u}_{Z_{\rho,a,j}} \frac{(j-3)!}{\rho^{j-1}},\quad &&  j\geq 3.
\end{array}
\right.
\end{eqnarray}
Using the Sobolev  inequality 
\begin{eqnarray*}
  \norm{\comii{x_1}^\ell \partial_1^{j} u}_{L^\infty_y\inner{\mathbb R_+;~L_{x_1}^2(\mathbb R)}}    \leq C \norm{\comii{x_1}^\ell   \partial_1^{j}  u}_{L^2(\mathbb R_+)}+C \norm{\comii{x_1}^\ell   \partial_1^{j} \partial_y u}_{L^2(\mathbb R_+)}, 
\end{eqnarray*}
gives
\begin{eqnarray}\label{tb} 
\norm{\comii{x_1}^\ell \partial_1^{j} u }_{L^\infty_y\inner{\mathbb R_+;~L_{x_1}^2(\mathbb R)}}    \leq   \left \{ \begin{array}{lll}
C\abs{u}_{Y_{\rho,a}},\qquad{\rm if}~0\leq j\leq 2,\\[6pt]
C\abs{u}_{Y_{\rho,a,j} } j^{-1/2}\rho^{1/2}\frac{(j-3)!}{\rho^{j-1}},\qquad{\rm if}~j\geq 3.
\end{array}\right.
\end{eqnarray} 
Finally, 
\begin{eqnarray}\label{eseu}
	\forall~k\geq 0,\quad   \norm{\comii{x_1}^\ell  \overline{\partial_1^{k+1}\partial_3u^{I,0}_3} }_{ L_{x_1}^2(\mathbb R)}\leq  C \norm{u^{I,0}_3}_{\mathcal A_\tau} \frac{(k+3)!}{\tau^{k+3}}
\end{eqnarray}
due to  \eqref{eseuler}. 

\noindent
{\bf  Step (b)}. We now prove \eqref{cru2}. For this purpose we use \eqref{eseu} and \eqref{tc+} to calculate
\begin{eqnarray*}
	&& \sum_{k=0}^m{m\choose k}    \norm{\comii{x_1}^\ell  \overline{\partial_1^{k+1}\partial_3u^{I,0}_3} }_{ L_{x_1}^2(\mathbb R)}\norm{ e^{a y^2} \partial_1^{m-k}\partial_y v}_{L^2_y\inner{\mathbb R_+;~L_{x_1}^\infty(\mathbb R)}}\\
	&\leq & C \norm{u^{I,0}_3}_{\mathcal A_\tau}   \sum_{k=0}^{m-3}   \frac{m!}{k!(m-k)! }\frac{(k+3)!}{\tau^{k+3}}\frac{(m-k-3)!}{\rho^{m-k-1}} \abs{u}_{Z_{\rho,a, m-k}}\\
&&+	C \norm{u^{I,0}_3}_{\mathcal A_\tau}    \sum_{k=m-2}^{m}   \frac{m!}{k!(m-k)! }\frac{(k+3)!}{\tau^{k+3}} \abs{u}_{Z_{\rho,a, m-k}} \\
&\leq& C \frac{
	 (m-3)!}{\rho^{m-1}}  \sum_{k=0}^{m-3}   \frac{m^3}{k^3(m-k-2)^3}\frac{2^k\rho^k}{\tau^{k+3}}\abs{u}_{Z_{\rho,a, m-k}}+ C \frac{
	 (m-3)!}{\rho^{m-1}}\sum_{k=m-2}^{m}  \frac{2^k\rho^{m-1}}{\tau^{k+3}} \abs{u}_{Z_{\rho,a, m-k}}\\
	 &\leq& C \tau^{-3}\frac{
	 (m-3)!}{\rho^{m-1}}  \sum_{k=0}^{m-3}    \frac{2^k\rho^k}{\tau^{k}}\abs{u}_{Z_{\rho,a, m-k}}+ C\tau^{-3} \frac{
	 (m-3)!}{\rho^{m-1}}\sum_{k=m-2}^{m}  \frac{2^k\rho^{m-1}}{\tau^{k}} \abs{u}_{Z_{\rho,a, m-k}},
\end{eqnarray*}
which yields
\begin{eqnarray*}
	&&  \sum_{m=3}^N \inner{  \frac{\rho^{m-1}}{(m-3)!}}^2  \com{ \sum_{k=0}^m{m\choose k}    \norm{\comii{x_1}^\ell  \overline{\partial_1^{k+1}\partial_3u^{I,0}_3} }_{ L_{x_1}^2(\mathbb R)}\norm{ e^{a y^2} \partial_1^{m-k}\partial_y v}_{L^2_y\inner{\mathbb R_+;~L_{x_1}^\infty(\mathbb R)}}}^2\\
	&\leq& C\sum_{m=3}^N   \inner{ \sum_{k=0}^{m-3}   \frac{2^k\rho^k}{\tau^{k}}\abs{u}_{Z_{\rho,a, m-k}}}^2+ C\sum_{m=3}^N   \inner{\sum_{k=m-2}^{m}  \frac{2^k\rho^{m-1}}{\tau^{k}} \abs{u}_{Z_{\rho,a, m-k}}}^2  
\end{eqnarray*}
On the other hand,   by virtue of  Young's inequality for discrete convolution (cf. \cite[Theorem 20.18]{HR} )  we have 
\begin{eqnarray*}
   \sum_{m=3}^N   \inner{ \sum_{k=0}^{m-3}   \frac{2^k\rho^k}{\tau^{k}}\abs{u}_{Z_{\rho,a, m-k}}}^2 
	  \leq C   \inner{\sum_{k=0}^{N}   \frac{2^k\rho^k}{\tau^{k}}  }^2 \sum_{k=0}^{N}\abs{u}_{Z_{\rho,a, k}}^2  \leq  C   \abs{u}_{Z_{\rho,a}}^2,\end{eqnarray*}
since $\rho\leq \tau/3.$  And direct computation yields 
\begin{eqnarray*}
	 \sum_{m=3}^N   \inner{\sum_{k=m-2}^{m}  \frac{2^k\rho^{m-1}}{\tau^{k}} \abs{u}_{Z_{\rho,a, m-k}}}\leq  C   \abs{u}_{Z_{\rho,a}}^2.
	\end{eqnarray*}
Then we obtain \eqref{cru2}, combining the above inequalities. 

\noindent
{\bf  Step (c)}.  Now we check \eqref{cru1} and write   
\begin{eqnarray*}
&&   \sum_{k=0}^m{m\choose k}    \norm{\comii{x_1}^\ell  \partial_1^{k+1} u}_{L^\infty_y\inner{\mathbb R_+;~L_{x_1}^2(\mathbb R)}}\norm{ e^{a y^2} \partial_1^{m-k}\partial_y v}_{L^2_y\inner{\mathbb R_+;~L_{x_1}^\infty(\mathbb R)}} \norm{\varphi_m}_{L^2(\mathbb R_+^2)} \\
&\leq & S_1+S_2+S_3
\end{eqnarray*}
with 
\begin{eqnarray*}
S_1&=&   \sum_{k=0}^2{m\choose k}    \norm{\comii{x_1}^\ell  \partial_1^{k+1} u}_{L^\infty_y\inner{\mathbb R_+;~L_{x_1}^2(\mathbb R)}}\norm{ e^{a y^2} \partial_1^{m-k}\partial_y v}_{L^2_y\inner{\mathbb R_+;~L_{x_1}^\infty(\mathbb R)}} \norm{\varphi_m}_{L^2(\mathbb R_+^2)},\\
S_2 &=& \sum_{k=3}^{m-3}      {m\choose k}    \norm{\comii{x_1}^\ell  \partial_1^{k+1} u}_{L^\infty_y\inner{\mathbb R_+;~L_{x_1}^2(\mathbb R)}}\norm{ e^{a y^2} \partial_1^{m-k}\partial_y v}_{L^2_y\inner{\mathbb R_+;~L_{x_1}^\infty(\mathbb R)}} \norm{\varphi_m}_{L^2(\mathbb R_+^2)}
\end{eqnarray*}
and
\begin{eqnarray*}
S_3 &=& \sum_{k=m-2}^{m}   {m\choose k}    \norm{\comii{x_1}^\ell  \partial_1^{k+1} u}_{L^\infty_y\inner{\mathbb R_+;~L_{x_1}^2(\mathbb R)}}\norm{ e^{a y^2} \partial_1^{m-k}\partial_y v}_{L^2_y\inner{\mathbb R_+;~L_{x_1}^\infty(\mathbb R)}} \norm{\varphi_m}_{L^2(\mathbb R_+^2)}.
\end{eqnarray*}
For the term $S_{2,m}$,  we use    \reff{ta}, \reff{tc+} and \reff{tb} to compute
\begin{eqnarray*}
	S_{2,m} &=&\inner{ \frac{\rho^{m-1}}{(m-3)!}}^2\,\, \sum_{k=3}^{m-3}      {m\choose k}    \norm{\comii{x_1}^\ell  \partial_1^{k+1} u}_{L^\infty_y\inner{\mathbb R_+;~L_{x_1}^2(\mathbb R)}}\norm{ e^{a y^2} \partial_1^{m-k}\partial_y v}_{L^2_y\inner{\mathbb R_+;~L_{x_1}^\infty(\mathbb R)}} \norm{\varphi_m}_{L^2(\mathbb R_+^2)}\\
	 &\leq &C   \inner{ \frac{\rho^{m-1}}{(m-3)!}}^2\,\,\sum_{k=3}^{m-3}\frac{m!}{k!(m-k)! }\left[ k^{-1/2}\rho^{1/2}\frac{(k-2)!}{\rho^{k}} \abs{u}_{Y_{\rho,a,k+1} } \right] \frac{(m-k-3)!}{\rho^{m-k-1}}\abs{u}_{Z_{\rho,a,m-k}}\\
	 &&\qquad\qquad\qquad \times   m^{-1/2}\rho^{1/2} \frac{(m-3)!}{\rho^{m-1}}\abs{u}_{Y_{\rho,a,m} } \\
	&\leq & C  \rho \abs{u}_{Y_{\rho,a,m} }  \sum_{k=3}^{m-3}\frac{m^3}{k^2(m-k-2)^3  }k^{-1/2}m^{-1/2} \abs{u}_{Y_{\rho,a,k+1} }\abs{u}_{Z_{\rho,a,m-k}}\\
	&\leq & C   \rho \abs{u}_{Y_{\rho,a,m} }  \inner{\sum_{k=3}^{m-3}\frac{m^3}{k^2(m-k-2)^3  }k^{-1/2}m^{-1/2} \abs{u}_{Y_{\rho,a,k+1} }^2}^{1/2} \\
	&&\qquad\qquad\qquad \times \inner{\sum_{k=3}^{m-3}\frac{m^3}{k^2(m-k-2)^3  }k^{-1/2}m^{-1/2}\abs{u}_{Z_{\rho,a,m-k}}^2}^{1/2}\\
		&\leq & C \rho \abs{u}_{Z_{\rho,a,m}} \abs{u}_{Y_{\rho,a,m} }^2,
\end{eqnarray*}
and thus
\begin{eqnarray*}
	\sum_{m=3}^N S_{2,m}& \leq& C\rho \inner{\sum_{m=3}^N \abs{u}_{Y_{\rho,a,m} }^2}^{1/2}\inner{\sum_{m=3}^N \com{\sum_{k=3}^{m-3}\frac{m^3}{k^2(m-k-2)^3  }k^{-1/2}m^{-1/2} \abs{u}_{Y_{\rho,a,k+1} }\abs{u}_{Z_{\rho,a,m-k}}}^2 }^{1/2}\\
	& \leq& C\rho \abs{u}_{Y_{\rho,a}} \inner{\sum_{m=3}^N \com{\sum_{k=3}^{m-3}\frac{1}{k^2  }  \abs{u}_{Y_{\rho,a,k+1} }\abs{u}_{Z_{\rho,a,m-k}}}^2 }^{1/2}\\
	&&\qquad\qquad \qquad + C\rho \abs{u}_{Y_{\rho,a}} \inner{\sum_{m=3}^N \com{\sum_{k=3}^{m-3}\frac{1}{(m-k-2)^3  }  \abs{u}_{Y_{\rho,a,k+1} }\abs{u}_{Z_{\rho,a,m-k}}}^2 }^{1/2} 
\end{eqnarray*}
the last inequality following from the fact that 
\begin{eqnarray*}
\forall~ 3\leq k\leq m-3,\quad \frac{m^3}{k^2(m-k-2)^3  }k^{-1/2}m^{-1/2}\leq C \inner{\frac{1}{k^2}+ \frac{1}{(m-k-2)^3}}. 
\end{eqnarray*}
Moreover,   by virtue of  Young's inequality for discrete convolution (cf. \cite[Theorem 20.18]{HR} ) 
we obtain 
\begin{eqnarray*}
	\inner{\sum_{m=3}^N \com{\sum_{k=3}^{m-3}\frac{1}{k^2  }  \abs{u}_{Y_{\rho,a,k+1} }\abs{u}_{Z_{\rho,a,m-k}}}^2 }^{1/2}&\leq &C\inner{\sum_{m=3}^{+\infty}  \abs{u}_{Z_{\rho,a,m}}^2 }^{1/2}\sum_{k=3}^{+\infty} \frac{1}{k^2} \abs{u}_{Y_{\rho,a,k}}\\
	&\leq &C  \abs{u}_{Z_{\rho,a}} \inner{ \sum_{k=1}^{+\infty}  \abs{u}_{Y_{\rho,a,k}}^2}^{1/2}\inner{ \sum_{k=1}^{+\infty} \frac{1}{k^4}}^{1/2}\\
	&\leq &C  \abs{u}_{Z_{\rho,a}}\abs{u}_{Y_{\rho,a}}.
\end{eqnarray*}
Similarly 
\begin{eqnarray*}
	\inner{\sum_{m=3}^N \com{\sum_{k=3}^{m-3}\frac{1}{(m-k-2)^3  }  \abs{u}_{Y_{\rho,a,k+1} }\abs{u}_{Z_{\rho,a,m-k}}}^2 }^{1/2}
	 \leq C  \abs{u}_{Z_{\rho,a}}\abs{u}_{Y_{\rho,a}}.
\end{eqnarray*}
Combining these inequality we conclude
\begin{eqnarray*}
	\sum_{m=3}^N S_{2,m}\leq C  \rho \abs{u}_{Z_{\rho,a}}\abs{u}_{Y_{\rho,a}}^2\leq C    \abs{u}_{Z_{\rho,a}}\abs{u}_{Y_{\rho,a}}^2.
\end{eqnarray*}
The estimates on the rest two terms $S_1$ and $S_3$ can be deduced similarly and directly, and we have
\begin{eqnarray*}
	\sum_{m=3}^N \inner{S_{1,m}+S_{3,m}}  
	 \leq  C  \abs{u}_{Z_{\rho,a}} \abs{u}_{Y_{\rho,a} }^2,
\end{eqnarray*} 
proving \eqref{cru1}.  The proof of Lemma \ref{R8} is complete. 
\end{proof}

\begin{lem}\label{Rothers}
	A constant $C$ exists such that  
	\begin{eqnarray*}
		&&   \sum_{m=3}^N \inner{ \frac{\rho^{m-1}}{(m-3)!}}^2  \inner{   \mathcal R_3^{m}, ~\varphi_m}_{L^2(\mathbb R_+^2)} \leq C \inner{\abs{u}_{X_{\rho,a,m}}^3 +  \abs{u}_{X_{\rho,a,m} }^2},\\
		&&   \sum_{m=3}^N \inner{ \frac{\rho^{m-1}}{(m-3)!}}^2  \inner{   \mathcal R_4^{m}, ~\varphi_m}_{L^2(\mathbb R_+^2)} \leq {1\over8}    \abs{u}_{Z_{\rho,a}}^2 +C\sum_{m=3}^N \inner{ \frac{\rho^{m-1}}{(m-3)!}}^2 \norm{y\varphi_m}_{L^2(\mathbb R_+^2)}^2,\\
		&&   \sum_{m=3}^N \inner{ \frac{\rho^{m-1}}{(m-3)!}}^2  \inner{   \mathcal R_5^{m}, ~\varphi_m}_{L^2(\mathbb R_+^2)} \leq C \inner{\abs{u}_{X_{\rho,a,m}}^3 +  \abs{u}_{X_{\rho,a,m} }^2},\\
		&&   \sum_{m=3}^N \inner{ \frac{\rho^{m-1}}{(m-3)!}}^2  \inner{   \mathcal R_6^{m}, ~\varphi_m}_{L^2(\mathbb R_+^2)} \leq C    \abs{u}_{Z_{\rho,a}} \abs{u}_{Y_{\rho,a} }^2,\\
		&&   \sum_{m=3}^N \inner{ \frac{\rho^{m-1}}{(m-3)!}}^2  \inner{   \mathcal R_7^{m}, ~\varphi_m}_{L^2(\mathbb R_+^2)} \leq C     \abs{u}_{X_{\rho,a} }^2,\\
		&&   \sum_{m=3}^N \inner{ \frac{\rho^{m-1}}{(m-3)!}}^2  \inner{   \mathcal R_9^{m}, ~\varphi_m}_{L^2(\mathbb R_+^2)} \leq C    \abs{u}_{Z_{\rho,a}} \abs{u}_{Y_{\rho,a} }^2,\\
		&& \sum_{m=3}^N \inner{ \frac{\rho^{m-1}}{(m-3)!}}^2  \inner{   \mathcal R_{10}^{m}, ~\varphi_m}_{L^2(\mathbb R_+^2)} \leq C \inner{\abs{u}_{X_{\rho,a}}^3 +  \abs{u}_{X_{\rho,a} }^2}.
	\end{eqnarray*}
\end{lem}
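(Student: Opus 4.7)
The plan is to follow exactly the template used for $\mathcal R_8^m$ in Lemma~\ref{R8}. For each remainder $\mathcal R_j^m$ I will first apply Hölder's inequality to pair $\mathcal R_j^m$ with $\varphi_m$, controlling one factor in $L^\infty_y(L^2_{x_1})$ and the other in $L^2_y(L^\infty_{x_1})$ (or, when both factors involve $u$, put one in $L^\infty_{x_1,y}$ via Sobolev). The core auxiliary bounds are precisely \eqref{ta}, \eqref{tc+} and \eqref{tb} proved in Step~(a) of Lemma~\ref{R8}, together with the analytic estimates \eqref{eseuler} and \eqref{eseu} for the traces of $u^{I,0}_1, u^{I,0}_3$. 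The resulting double sums in $(k,m)$ are of convolution type, and are closed by splitting $k$ into the three regimes $k\in\{0,1,2\}$, $3\le k\le m-3$, $k\in\{m-2,m-1,m\}$ and applying Young's inequality for discrete convolutions, exactly as in Steps~(b)--(c) of the proof of Lemma~\ref{R8}.

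The work divides naturally according to the structure of each term. The terms $\mathcal R_3^m$ and $\mathcal R_7^m$ are lowest order: the former contains only a $\partial_1\comii{x_1}^\ell$-weight multiplied by $v+\overline{u^{I,0}_1}$, the latter a direct product with $\overline{\partial_3 u^{I,0}_3}$; both close in the $X_{\rho,a}$-norm alone, yielding quadratic (or, after using \eqref{relauv} for $v$, cubic) bounds. The term $\mathcal R_5^m$ is a standard commutator of the convective derivative and is handled by the same scheme. For $\mathcal R_4^m$, the $y\partial_y$-structure produces an unwanted $y$-factor from $y\partial_y\partial_1^{m-k}\omega$; pairing $y\varphi_{m-k}$ with $\varphi_m$ by Cauchy--Schwarz returns $\norm{y\varphi_m}_{L^2}^2$ plus the desired $\tfrac18 \abs{u}_{Z_{\rho,a}}^2$ (the $Z$-norm absorbing the extra $\partial_y$ after an integration by parts). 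For $\mathcal R_{10}^m$, the piece quadratic in $\omega$ is a direct Cauchy product in $X_{\rho,a}$, while the piece $\overline{\partial_1^{k+1}\partial_3 u^{I,0}_3}\partial_1^{m-k}v$ is estimated using \eqref{eseuler} and \eqref{relauv} in complete parallel with the $\mathcal R_8^m$ analysis.

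The main obstacle, identical to that of Lemma~\ref{R8}, lies in $\mathcal R_6^m$ and $\mathcal R_9^m$: both contain a second $y$-derivative of $u$ (namely $\partial_y\omega = \partial_y^2 u$ in $\mathcal R_6^m$, and $\partial_y v = -\int_{-\infty}^{x_1}\partial_y^2 u$ in $\mathcal R_9^m$), so the estimate can close only in the stronger $Z_{\rho,a}$-norm. To spend $\abs{u}_{Z_{\rho,a}}$ only once, I exploit the vanishing at $y=0$: for $\mathcal R_6^m$, the factor $u-u(t,x_1,0)=\int_0^y\partial_y u\,dy'$ is controlled in $L^\infty_y$ by one factor of the $Y_{\rho,a}$-norm, freeing the $Z_{\rho,a}$-norm for $\partial_1^{m-k}\partial_y\omega$; for $\mathcal R_9^m$, the representation \eqref{relauv} converts $\partial_y v$ into an $x_1$-integral of $\partial_y^2 u$, again paying exactly one $Z_{\rho,a}$. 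After the three-way split in $k$ and the same elementary bound $\frac{m^3}{k^2(m-k-2)^3}k^{-1/2}m^{-1/2}\le C\bigl(k^{-2}+(m-k-2)^{-3}\bigr)$ for $3\le k\le m-3$, Young's discrete convolution inequality delivers the advertised bound $\abs{u}_{Z_{\rho,a}}\abs{u}_{Y_{\rho,a}}^2$, which is the only nonlinear loss the scheme can tolerate.
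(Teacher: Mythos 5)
Your proposal follows essentially the same route as the paper: the paper's own proof of this lemma is a brief deferral to the scheme of Lemma \ref{R8} (H\"older pairings controlled by the auxiliary bounds \eqref{ta}, \eqref{tb}, \eqref{tc+} and the Euler-trace estimates \eqref{eseuler}--\eqref{eseu}, the three-regime split of the $k$-sum, and Young's inequality for discrete convolutions), with the $Y_{\rho,a}$-norm replaced by $X_{\rho,a}$ for the terms not involving the top derivatives, and with $\mathcal R_6^m,\mathcal R_9^m$ treated exactly as in \eqref{cru1}. Your term-by-term sketch reproduces this strategy (modulo harmless imprecision, e.g.\ the factor in $\mathcal R_4^m$ is $y\,\partial_1^{m-k}\partial_y\omega$ rather than $y\varphi_{m-k}$), so it is in substance the paper's argument.
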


\begin{proof}
	The treatment of $\mathcal R_6, \mathcal R_9$ is exactly the same as in the proof of  \eqref{cru1}.  The other terms can be deduced similarly by following the proof in Lemma \ref{R8} with slightly changes,  and the arguments here will be simpler since there is no the highest derivative $\partial_1^{m+1}$ involved.  This means we can perform the estimates with the norm $Y_{\rho,a}$ in Lemma \ref{R8} replaced by $X_{\rho,a}$ here.  So we omit   the proof for brevity.
\end{proof}

Combining the estimates in Lemma \ref{R1+2}-Lemma \ref{Rothers}, we have
\begin{cor}\label{cor0222}
There are two constants $C, C_0$ such that
	\begin{eqnarray*}
		  &&\sum_{m=3}^N \inner{ \frac{\rho^{m-1}}{(m-3)!}}^2   \inner{\sum_{k=1}^{10}\inner{   \mathcal R_k^{m}, ~\varphi_m}_{L^2(\mathbb R_+^2)}}\\
		  & \leq&   {1\over 4}     \abs{u}_{Z_{\rho,a}}^2 +C_0 \sum_{m=3}^N \inner{ \frac{\rho^{m-1}}{(m-3)!}}^2 \norm{y\varphi_m}_{L^2(\mathbb R_+^2)}^2+C  \abs{u}_{Z_{\rho,a}} \abs{u}_{Y_{\rho,a} }^2+C \inner{\abs{u}_{X_{\rho,a}}^2 +  \abs{u}_{X_{\rho,a} }^4}.
	\end{eqnarray*}
\end{cor}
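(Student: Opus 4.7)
The proof will be essentially a book-keeping exercise: Corollary \ref{cor0222} is an immediate consequence of adding together the ten estimates supplied by Lemma \ref{R1+2}, Lemma \ref{R8}, and Lemma \ref{Rothers}. The plan is to sum the bounds for $\inner{\mathcal R_k^m,\varphi_m}_{L^2(\mathbb R_+^2)}$ weighted by $\pare{\rho^{m-1}/(m-3)!}^2$ over $k=1,\ldots,10$ and $m=3,\ldots,N$, and then group the resulting right-hand side contributions according to the four term types that appear in the statement.

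The first group consists of the $\norm{y\varphi_m}_{L^2}^2$ terms: these are produced by $\mathcal R_1^m$ and $\mathcal R_2^m$ (Lemma \ref{R1+2}) and by $\mathcal R_4^m$ and $\mathcal R_8^m$ (Lemmas \ref{R8} and \ref{Rothers}, where they appear alongside the absorption term). Adding the constants gives the global constant $C_0$ in front of $\sum_m \pare{\rho^{m-1}/(m-3)!}^2 \norm{y\varphi_m}_{L^2}^2$. The second group collects the two contributions of the form $\tfrac{1}{8}\abs{u}_{Z_{\rho,a}}^2$ coming from $\mathcal R_4^m$ and $\mathcal R_8^m$; summing the two absorbable Cauchy–Schwarz terms gives the coefficient $\tfrac14$ in front of $\abs{u}_{Z_{\rho,a}}^2$.

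The third group is the triple product $\abs{u}_{Z_{\rho,a}}\abs{u}_{Y_{\rho,a}}^2$, which is supplied by $\mathcal R_6^m$, $\mathcal R_8^m$ and $\mathcal R_9^m$ (these are precisely the three contributions where the highest tangential derivative $\partial_1^{m+1}$ appears on either $u$ or $v$, forcing the use of the $Y$-norm in the estimate). The fourth and last group gathers the purely polynomial expressions in $\abs{u}_{X_{\rho,a}}$: the $\abs{u}_{X_{\rho,a}}^4$ bound from $\mathcal R_2^m$ together with the cubic and quadratic bounds from $\mathcal R_3^m, \mathcal R_5^m, \mathcal R_7^m, \mathcal R_{10}^m$. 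One then uses the elementary inequality $t^3 \le t^2 + t^4$ to absorb every $\abs{u}_{X_{\rho,a}}^3$ into $\abs{u}_{X_{\rho,a}}^2 + \abs{u}_{X_{\rho,a}}^4$, obtaining the final form of the last term.

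There is no real obstacle here beyond careful arithmetic: all ten pointwise estimates have already been proved in the preceding lemmas, and the corollary is stated precisely so that the constants $1/8+1/8=1/4$ (for the absorbable $\abs{u}_{Z_{\rho,a}}^2$ piece) work out. The only point requiring attention is to verify that every term on the right of the ten lemma bounds falls into exactly one of the four categories listed in the corollary, and in particular that no term produces an unbounded growth in $N$; this is automatic because each individual lemma already passed from a sum over $m\le N$ to a bound independent of $N$ (via the discrete Young inequality and the weighted $\ell^2$ structure of the $X_{\rho,a}$, $Y_{\rho,a}$, $Z_{\rho,a}$ norms).
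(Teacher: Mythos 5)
Your proposal is correct and coincides with the paper's own treatment: the corollary is obtained exactly by summing the weighted bounds of Lemmas \ref{R1+2}, \ref{R8} and \ref{Rothers}, with the two $\tfrac18\abs{u}_{Z_{\rho,a}}^2$ contributions from $\mathcal R_4^m$ and $\mathcal R_8^m$ giving the $\tfrac14$ coefficient, the $\norm{y\varphi_m}_{L^2}^2$ terms collected into $C_0$, the $\abs{u}_{Z_{\rho,a}}\abs{u}_{Y_{\rho,a}}^2$ terms coming from $\mathcal R_6^m,\mathcal R_8^m,\mathcal R_9^m$, and the cubic terms absorbed via $t^3\le t^2+t^4$. (Only your parenthetical heuristic is slightly off: in $\mathcal R_6^m$ the $Z$--$Y$ structure comes from the second normal derivative $\partial_y^2 u$ rather than from a $\partial_1^{m+1}$ derivative, but this does not affect the book-keeping.)
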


\begin{lem}\label{lowleft}
We have
\begin{eqnarray*}
&&\inner{\pare{\partial_t - \partial_y^2 + \overline{\partial_3 u^{I,0}_3} y\partial_y} \varphi_m   , ~  \varphi_m }_{L^2(\mathbb R_+^2)} - \inner{a'(t)y^2 \varphi_m ,~\varphi_m }_{L^2(\mathbb R_+^2)}\nonumber  \\
	&&\qquad\qquad+\inner{  \pare{v+ \overline{u^{I,0}_1}} \partial_1\varphi_m  + \pare{u  - u (t,x_1,0)} \partial_y  \varphi_m (t), ~  \varphi_m }_{L^2(\mathbb R_+^2)} \\
&\geq&	\frac{1}{2}\frac{d}{dt} \norm{\varphi_m}_{L^2(\mathbb R_+^2)} ^2+\norm{\partial_y\varphi_m}_{L^2(\mathbb R_+^2)} ^2-a'(t) \norm{y  \varphi_m}_{L^2(\mathbb R_+^2)}^2
	 \\
	&&+\frac{d}{dt}\int_{\mathbb R_{x_1}} \comii{x_1}^\ell\overline{\partial_1^{m+1}u^{I,0}_1}(t,x_1) \comii{x_1}^\ell\partial_1^mu(t,x_1, 0)\,dx_1-C \inner{ \frac{(m-3)!}{\rho^{m-1}}}^2\abs{u}_{X_{\rho,a,m} }^2.
\end{eqnarray*}	
\end{lem}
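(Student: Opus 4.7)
\textbf{Proof plan for Lemma \ref{lowleft}.} The strategy is to handle each term of the inner product separately via integration by parts in $y$ or $x_1$, and to absorb all nonlinear/transport contributions into the remainder $C (\tfrac{(m-3)!}{\rho^{m-1}})^2 |u|_{X_{\rho,a,m}}^2$ using the trace bound $\norm{\partial_1^m u(t,\cdot,0)}_{L^2_{x_1}}^2 \le C \sum_{j=0}^1 \norm{\comii{x_1}^\ell e^{ay^2}\partial_1^m \partial_y^j u}_{L^2(\mathbb R_+^2)}^2$. The time-derivative term gives $\tfrac12 \tfrac{d}{dt} \norm{\varphi_m}^2$ and the weight term gives $-a'(t)\norm{y\varphi_m}^2$ directly. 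For the transport-type terms I would: integrate by parts in $x_1$ for $(v+\overline{u^{I,0}_1})\partial_1 \varphi_m \cdot \varphi_m$, using $\partial_1 v = -\partial_y u$ (from \eqref{relauv}) and boundedness of $\partial_y u$ and $\partial_1\overline{u^{I,0}_1}$; integrate by parts in $y$ for $(u-u(t,x_1,0))\partial_y \varphi_m \cdot \varphi_m$, whose $y=0$ boundary term vanishes by definition of the shifted velocity; and integrate by parts in $y$ for $\overline{\partial_3 u^{I,0}_3}\, y\partial_y \varphi_m\cdot \varphi_m$, which yields $-\tfrac12 \overline{\partial_3 u^{I,0}_3}\norm{\varphi_m}^2$. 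Each of these three pieces is bounded by $C\norm{\varphi_m}^2$ (using $\norm{u^{I,0}_1}_{\mathcal A_\tau}, \norm{u^{I,0}_3}_{\mathcal A_\tau}$ to control the sup-norms), hence by the stated remainder.

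The subtle step is the vertical viscosity. Integration by parts in $y$ gives
\begin{equation*}
-\bigl(\partial_y^2 \varphi_m,\varphi_m\bigr)_{L^2(\mathbb R_+^2)} = \norm{\partial_y \varphi_m}_{L^2(\mathbb R_+^2)}^2 + \int_{\mathbb R_{x_1}} \partial_y\varphi_m(t,x_1,0)\,\varphi_m(t,x_1,0)\,dx_1,
\end{equation*}
and the plan is to convert the boundary term into the claimed $\tfrac{d}{dt}$ contribution modulo acceptable errors. Here I would use the two key identifications at $y=0$: the Dirichlet-type boundary value $\varphi_m|_{y=0} = \comii{x_1}^\ell \partial_1^m \partial_y u|_{y=0} = \comii{x_1}^\ell \overline{\partial_1^{m+1} u^{I,0}_1}$, which follows from the prescribed $\partial_y u|_{y=0}=-\overline{\partial_3 u^{I,0}_3}$ together with $\overline{\partial_1 u^{I,0}_1}=-\overline{\partial_3 u^{I,0}_3}$ (divergence-free); and the fact that $v(t,x_1,0)=-\overline{u^{I,0}_1}(t,x_1)$, obtained by integrating the Dirichlet condition along the $x_1$-axis in \eqref{relauv}, so that $v+\overline{u^{I,0}_1}$ vanishes on $\{y=0\}$ as well.

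With these vanishings, evaluating the equation \eqref{linsys} at $y=0$ yields the clean identity
\begin{equation*}
\partial_y^2 u(t,x_1,0) = \partial_t u(t,x_1,0) + \overline{\partial_3 u^{I,0}_3}\, u(t,x_1,0) - \overline{u^{I,0}_1}\,\partial_1 u(t,x_1,0).
\end{equation*}
Applying $\partial_1^m$ (Leibniz) and multiplying by $\comii{x_1}^{2\ell}\overline{\partial_1^{m+1}u^{I,0}_1}$ before integrating in $x_1$, I would extract $\frac{d}{dt}\int \comii{x_1}^{2\ell}\partial_1^m u(t,x_1,0)\overline{\partial_1^{m+1}u^{I,0}_1}\,dx_1$ via the elementary identity $\int (\partial_t f) g = \frac{d}{dt}\int fg - \int f (\partial_t g)$.

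The main obstacle is controlling the resulting remainders: the time derivative $\partial_t \overline{\partial_1^{m+1}u^{I,0}_1}$ (rewritten via the Bernoulli-type law $\partial_t \overline{u^{I,0}_1} = -\overline{u^{I,0}_1}\partial_1\overline{u^{I,0}_1}-\overline{\partial_1 p^{I,0}}$), plus the Leibniz commutators $\partial_1^m[\overline{\partial_3 u^{I,0}_3}\,u(\cdot,0)]$ and $\partial_1^m[\overline{u^{I,0}_1}\partial_1 u(\cdot,0)]$. For each of these I would split the sum so that either all derivatives fall on the $u^{I,0}$-factor, controlled by the analytic estimates \eqref{eseuler}--\eqref{dt} together with $\rho\le \tau/3$ (giving a geometric factor $(2\rho/\tau)^m$), or else on the $u$-factor, controlled by the trace bound above. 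Cauchy-Schwarz in $x_1$ followed by Young's inequality then absorbs everything into $C(\tfrac{(m-3)!}{\rho^{m-1}})^2|u|_{X_{\rho,a,m}}^2$, completing the proof.
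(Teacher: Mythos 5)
Your proposal is correct and follows essentially the same route as the paper's proof: integration by parts together with the cancellation coming from \eqref{relauv} for the transport terms, identification of $\varphi_m|_{y=0}$ with $\comii{x_1}^\ell\,\overline{\partial_1^{m+1}u^{I,0}_1}$ via the boundary condition and the divergence-free relation, and, crucially, evaluation of \eqref{linsys} at $y=0$ (where $v+\overline{u^{I,0}_1}$ vanishes) to rewrite the viscous boundary term as a total time derivative plus remainders controlled through Leibniz, the trace bound and the analytic estimates \eqref{eseuler}--\eqref{dt}. One cosmetic remark: boundedness of $\partial_y u$ is neither needed nor available with constants depending only on the data; the $\partial_y u$ contributions produced by the two transport terms cancel exactly, as your own identity $\partial_1 v=-\partial_y u$ already shows, so the transport remainder is bounded by the Euler sup-norms alone, exactly as in the paper.
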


\begin{proof}
Firstly we calculate, integrating by parts and using the relation \eqref{relauv},
\begin{eqnarray}\label{seesf}
&&	\abs{\inner{  \overline{\partial_3 u^{I,0}_3} y\partial_y\varphi_m  , ~  \varphi_m }_{L^2(\mathbb R_+^2)} }+\abs{\inner{  \pare{v+ \overline{u^{I,0}_1}} \partial_1\varphi_m  + \pare{u  - u (t,x_1,0)} \partial_y  \varphi_m, ~  \varphi_m}_{L^2(\mathbb R_+^2)}}\nonumber \\
	&\leq &\frac{1}{2} \inner{\norm{\overline{\partial_3 u^{I,0}_3} }_{L^\infty}+\norm{\overline{\partial_3 u^{I,0}_1} }_{L^\infty}}\norm{ \varphi_m}_{L^2(\mathbb R_+^2)} ^2.
\end{eqnarray}
Integrating by parts and  using the boundary condition in \eqref{linsysomega},  we have
\begin{eqnarray}\label{fiest}
  \inner{\pare{\partial_t - \partial_y^2 } \varphi_m  , ~  \varphi_m }_{L^2(\mathbb R_+^2)} 
	 &=&\frac{1}{2}\frac{d}{dt} \norm{\varphi_m}_{L^2(\mathbb R_+^2)} ^2+\norm{\partial_y\varphi_m}_{L^2(\mathbb R_+^2)} ^2\nonumber\\
	 &&+\int_{\mathbb R_{x_1}} \comii{x_1}^\ell\overline{\partial_1^{m+1}u^{I,0}_1}(t,x_1) \inner{\partial_y\varphi_m}(t, x_1, 0) \,dx_1.
\end{eqnarray} 
Now we check the boundary value of $\partial_y \varphi$.  In view of 
\eqref{defver} we see
\begin{eqnarray*}
	\partial_y\varphi_m\big |_{y=0}=\comii{x_1}^\ell \partial_1^m\partial_y^2 u \big|_{y=0}.
\end{eqnarray*}
And moreover, using the relation
\begin{eqnarray*}
	\comii{x_1}^\ell\partial_y^2 u\big |_{y=0}=\partial_t u(x_1, 0)-\overline{\partial_3 u^{I,0}_3} u(x_1, 0)+\overline{u^{I,0}_1}(\partial_1u)(x_1, 0)
\end{eqnarray*}
which follows from \eqref{linsys},  we conclude 
\begin{eqnarray*}
	\partial_y\varphi_m\big |_{y=0}=\partial_t \comii{x_1}^\ell\partial_1^mu(t, x_1, 0)-\comii{x_1}^\ell\partial_1^m \inner{\overline{\partial_3 u^{I,0}_3} u(t,x_1, 0)}+\comii{x_1}^\ell\partial_1^m\inner{\overline{u^{I,0}_1}(\partial_1u)(t,x_1, 0)}.
\end{eqnarray*}
As a result, 
\begin{eqnarray*}
	&&\int_{\mathbb R_{x_1}} \comii{x_1}^\ell\overline{\partial_1^{m+1}u^{I,0}_1}(t,x_1)\comii{x_1}^\ell \inner{\partial_y\varphi_m}(t, x_1, 0) \,dx_1\\
	&=&\frac{d}{dt}\int_{\mathbb R_{x_1}} \comii{x_1}^\ell\overline{\partial_1^{m+1}u^{I,0}_1}(t,x_1) \comii{x_1}^\ell\partial_1^mu(t,x_1, 0)\,dx_1\\
	&&-\int_{\mathbb R_{x_1}} \comii{x_1}^\ell\overline{\partial_t\partial_1^{m+1}u^{I,0}_1}(t,x_1)\comii{x_1}^\ell \partial_1^mu(t,x_1, 0)\,dx_1\\
	&&-\int_{\mathbb R_{x_1}} \comii{x_1}^\ell\overline{\partial_1^{m+1}u^{I,0}_1}(t,x_1) \partial_1^m \inner{\overline{\partial_3 u^{I,0}_3} u(x_1, 0)}\,dx_1\\
	&&+\int_{\mathbb R_{x_1}} \comii{x_1}^\ell\overline{\partial_1^{m+1}u^{I,0}_1}(t,x_1) \partial_1^m\inner{\overline{u^{I,0}_1}(\partial_1u)(t,x_1, 0)}\,dx_1.
\end{eqnarray*}
Moreover, In view of \eqref{dt},  we can repeat the arguments in Lemma \ref{R8} and Lemma \ref{Rothers}, to obtain, observing $\rho<\tau/4,$  
\begin{eqnarray*}
	\abs{\int_{\mathbb R_{x_1}} \comii{x_1}^\ell\overline{\partial_t\partial_1^{m+1}u^{I,0}_1}(t,x_1)\comii{x_1}^\ell \partial_1^mu(t,x_1, 0)\,dx_1}\leq  C \inner{ \frac{(m-3)!}{\rho^{m-1}}}^2 \norm{u^{I,0}_1}_{\mathcal A_\tau} \abs{u}_{X_{\rho,a,m} },
\end{eqnarray*}
\begin{eqnarray*}
	\abs{\int_{\mathbb R_{x_1}} \comii{x_1}^\ell\overline{\partial_1^{m+1}u^{I,0}_1}(t,x_1) \partial_1^m \inner{\overline{\partial_3 u^{I,0}_3} u(x_1, 0)}\,dx_1}
	\leq  C \inner{ \frac{(m-3)!}{\rho^{m-1}}}^2 \norm{u^{I,0}_1}_{\mathcal A_\tau} \norm{u^{I,0}_3}_{\mathcal A_\tau} \abs{u}_{X_{\rho,a,m} }
\end{eqnarray*}
and 
\begin{eqnarray*}
	\abs{\int_{\mathbb R_{x_1}} \comii{x_1}^\ell\overline{\partial_1^{m+1}u^{I,0}_1}(t,x_1) \partial_1^m\inner{\overline{u^{I,0}_1}(\partial_1u)(t,x_1, 0)}\,dx_1}
	\leq  C \inner{ \frac{(m-3)!}{\rho^{m-1}}}^2 \norm{u^{I,0}_1}_{\mathcal A_\tau} ^2 \abs{u}_{X_{\rho,a,m} }.
\end{eqnarray*}
Combing these inequalities above, we conclude
\begin{eqnarray*}
	&&\int_{\mathbb R_{x_1}} \comii{x_1}^\ell\overline{\partial_1^{m+1}u^{I,0}_1}(t,x_1)\comii{x_1}^\ell \inner{\partial_y\varphi_m}(t, x_1, 0) \,dx_1\\
	&\geq &\frac{d}{dt}\int_{\mathbb R_{x_1}} \comii{x_1}^\ell\overline{\partial_1^{m+1}u^{I,0}_1}(t,x_1) \comii{x_1}^\ell\partial_1^mu(t,x_1, 0)\,dx_1-C \inner{ \frac{(m-3)!}{\rho^{m-1}}}^2\abs{u}_{X_{\rho,a,m} },
	\end{eqnarray*}
  which,  along with \eqref{seesf} and \eqref{fiest},  
  yields the conclusion, completing the proof. 
   \end{proof}
   
   \begin{lem} \label{lem0222}
   Let $	a(t)=a_0-\inner{2a_0^2+C_0}t$ with $C_0$   the constants given in Corollary  \ref{cor0222}.     Then  for any $N$, 
   	     \begin{eqnarray*}
   	&&	\frac{1}{2}\frac{d}{dt} \sum_{m=3}^N\inner{ \frac{\rho^{m-1}}{(m-3)!}}^2\norm{\varphi_m}_{L^2(\mathbb R_+^2)} ^2+\frac{1}{2}\sum_{m=3}^N \inner{ \frac{\rho^{m-1}}{(m-3)!}}^2\norm{\comii{x_1}^\ell e^{a  y^2}\partial_1^m\partial_y^2  u (t)}_{L^2(\mathbb R_+^2)} ^2 \\
&&\qquad\qquad\qquad\qquad-\rho'(t)\sum_{m=3}^N \inner{(m-1)^{1/2} \rho^{-1/2} \frac{\rho^{m-1}}{(m-3)!}\norm{\varphi_m}_{L^2(\mathbb R_+^2)} }^2\\
&\leq &{1\over 4}     \abs{u}_{Z_{\rho,a}}^2 -\frac{d}{dt}\sum_{m=3}^N \inner{ \frac{\rho^{m-1}}{(m-3)!}}^2 \int_{\mathbb R_{x_1}} \comii{x_1}^\ell\overline{\partial_1^{m+1}u^{I,0}_1}(t,x_1) \comii{x_1}^\ell\partial_1^mu(t,x_1, 0)\,dx_1\\
 &&+C \inner{  \abs{\rho'}\rho^{-2} \abs{u}_{X_{\rho,a}} +\abs{u}_{X_{\rho,a}}^2 +  \abs{u}_{X_{\rho,a} }^4}.
   \end{eqnarray*}
   \end{lem}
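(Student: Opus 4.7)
The strategy is to multiply equation \reff{maiequ} by the Gevrey weight $\bigl(\rho(t)^{m-1}/(m-3)!\bigr)^2$, sum over $m = 3,\ldots, N$, and combine the lower bound of the left-hand side supplied by Lemma \ref{lowleft} with the upper bound of the summed nonlinear remainder $\sum(\cdots)^2(\mathcal R^{m},\varphi_m)_{L^2}$ supplied by Corollary \ref{cor0222}. All extra terms in the final inequality then arise from bookkeeping the commutation of $\partial_t$ with the $t$-dependent weight $\rho(t)^{m-1}$ and from one integration by parts in $y$.

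More concretely, the product rule
\begin{equation*}
\frac{d}{dt}\Bigl[\Bigl(\frac{\rho^{m-1}}{(m-3)!}\Bigr)^2 \norm{\varphi_m}_{L^2}^2\Bigr] = \Bigl(\frac{\rho^{m-1}}{(m-3)!}\Bigr)^2 \frac{d}{dt}\norm{\varphi_m}_{L^2}^2 + 2(m-1)\rho^{-1}\rho'(t)\Bigl(\frac{\rho^{m-1}}{(m-3)!}\Bigr)^2 \norm{\varphi_m}_{L^2}^2,
\end{equation*}
applied to the $\tfrac12 \tfrac{d}{dt}\norm{\varphi_m}_{L^2}^2$ contribution of Lemma \ref{lowleft}, produces exactly the two leading left-hand-side terms of the claim. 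Next, to convert the $\norm{\partial_y\varphi_m}^2$ appearing in Lemma \ref{lowleft} into the target $\tfrac12\norm{\comii{x_1}^\ell e^{ay^2}\partial_1^m\partial_y^2 u}^2$, I would use the pointwise identity $\comii{x_1}^\ell e^{ay^2}\partial_1^m\partial_y^2 u = \partial_y\varphi_m - 2ay\,\varphi_m$ together with one integration by parts in $y$ (noting that the boundary value $\varphi_m|_{y=0}=\comii{x_1}^\ell\overline{\partial_1^{m+1}u^{I,0}_1}$ is $y$-independent). This introduces an error bounded by a constant multiple of $a\norm{\varphi_m}^2 + a^2\norm{y\varphi_m}^2$. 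Combining this error with $-a'(t)\norm{y\varphi_m}^2$ from Lemma \ref{lowleft} and with the $C_0\sum(\cdots)^2\norm{y\varphi_m}^2$ contribution of Corollary \ref{cor0222}, the choice $a'(t) = -(2a_0^2+C_0)$ with $a(t)\leq a_0$ makes the net coefficient of $\sum(\cdots)^2\norm{y\varphi_m}^2$ non-negative on the left-hand side, so that term can simply be dropped. The residual $a\norm{\varphi_m}^2$, together with the error $C\bigl((m-3)!/\rho^{m-1}\bigr)^2\abs{u}_{X_{\rho,a,m}}^2$ from Lemma \ref{lowleft}, after multiplication and summation both reduce to $C\abs{u}_{X_{\rho,a}}^2$ and are absorbed on the right-hand side.

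The delicate point, and the main obstacle, is the treatment of the boundary contribution $B_m(t) := \int_{\mathbb R_{x_1}} \comii{x_1}^\ell\overline{\partial_1^{m+1}u^{I,0}_1}\,\comii{x_1}^\ell \partial_1^m u(t,x_1,0)\,dx_1$ coming from Lemma \ref{lowleft}. The same commutation identity decomposes $\sum(\cdots)^2 \tfrac{d}{dt}B_m$ into $\tfrac{d}{dt}\sum(\cdots)^2 B_m$ (which is precisely the boundary term kept on the right-hand side of the claim) and an extra residual $-2\rho'(t)\sum(m-1)\rho^{-1}(\cdots)^2 B_m$ that must be controlled by $C\abs{\rho'}\rho^{-2}\abs{u}_{X_{\rho,a}}$. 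For this I would combine the trace inequality
\begin{equation*}
\norm{\comii{x_1}^\ell\partial_1^m u(t,x_1,0)}_{L^2_{x_1}} \leq C\bigl(\norm{\comii{x_1}^\ell e^{ay^2}\partial_1^m u}_{L^2(\mathbb R_+^2)} + \norm{\comii{x_1}^\ell e^{ay^2}\partial_1^m\partial_y u}_{L^2(\mathbb R_+^2)}\bigr),
\end{equation*}
valid since $e^{ay^2}|_{y=0}=1$, with the Gevrey bound $\norm{\comii{x_1}^\ell\overline{\partial_1^{m+1}u^{I,0}_1}}_{L^2_{x_1}} \leq C\norm{u^{I,0}_1}_{\mathcal A_\tau}(m+1)!/\tau^{m+1}$ from \reff{eseuler}, and then sum in $m$ by Cauchy-Schwarz, absorbing the factorial growth $(m+1)!/(m-3)!\leq m^4$ via the geometric gap $\rho\leq\tau/3$; the residual $\tau$-dependence is absorbed into the constant $C$ (which is already permitted to depend on $\tau$ and $\norm{u^{I,0}_1}_{\mathcal A_\tau}$). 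Collecting all the contributions from the two paragraphs above yields the claimed inequality.
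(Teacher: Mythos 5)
Your proposal follows essentially the same route as the paper's proof: weight \eqref{maiequ} by $\bigl(\rho^{m-1}/(m-3)!\bigr)^2$, sum in $m$, invoke Lemma \ref{lowleft} and Corollary \ref{cor0222}, commute $\frac{d}{dt}$ with the $t$-dependent weight to produce the $-\rho'(t)$ terms (both for $\norm{\varphi_m}^2$ and for the boundary integral, the latter residual being controlled by $C\abs{\rho'}\rho^{-2}\abs{u}_{X_{\rho,a}}$ via the trace inequality in $y$, the analogue of \eqref{eseuler} for $u^{I,0}_1$, and $\rho\le\tau/3$), and convert $\norm{\partial_y\varphi_m}^2$ into $\tfrac12\norm{\comii{x_1}^\ell e^{ay^2}\partial_1^m\partial_y^2u}^2$ up to $\norm{y\varphi_m}^2$ errors that the choice $a'(t)=-(2a_0^2+C_0)$ is designed to absorb together with the $C_0$-term of Corollary \ref{cor0222}. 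The only caveat, which you share with the paper's own proof, is that Corollary \ref{cor0222} also contributes a term $C\abs{u}_{Z_{\rho,a}}\abs{u}_{Y_{\rho,a}}^2$ that survives on the right-hand side (and indeed reappears in Proposition \ref{prenes}), so strictly it should be kept in the stated inequality; apart from this bookkeeping point your argument matches the paper's.
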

   
   \begin{proof}
   Using the equality \eqref{maiequ} and Lemma \ref{lowleft}, we obtain 
   \begin{eqnarray*}
   	&&\sum_{m=3}^N \inner{ \frac{\rho^{m-1}}{(m-3)!}}^2 \com{\frac{1}{2}\frac{d}{dt} \norm{\varphi_m}_{L^2(\mathbb R_+^2)} ^2+  \norm{\partial_y\varphi_m}_{L^2(\mathbb R_+^2)} ^2-a'(t)\norm{y  \varphi_m}_{L^2(\mathbb R_+^2)}^2}\\
   	&\leq&  -\frac{d}{dt}\sum_{m=3}^N \inner{ \frac{\rho^{m-1}}{(m-3)!}}^2 \int_{\mathbb R_{x_1}} \comii{x_1}^\ell\overline{\partial_1^{m+1}u^{I,0}_1}(t,x_1) \comii{x_1}^\ell\partial_1^mu(t,x_1, 0)\,dx_1\\
 &&+\sum_{m=3}^N(2m-2)\frac{\rho'(t) \rho^{2m-3}}{\left[(m-3)!\right]^2}  \int_{\mathbb R_{x_1}} \comii{x_1}^\ell\overline{\partial_1^{m+1}u^{I,0}_1}(t,x_1) \comii{x_1}^\ell\partial_1^mu(t,x_1, 0)\,dx_1+\abs{u}_{\rho,a}\\
 &&+\sum_{m=3}^N \inner{ \frac{\rho^{m-1}}{(m-3)!}}^2  \inner{\sum_{k=1}^{10}\inner{   \mathcal R_k^m(t), ~\varphi_m (t)}_{L^2(\mathbb R_+^2)}}\\
   	&\leq&  -\frac{d}{dt}\sum_{m=3}^N \inner{ \frac{\rho^{m-1}}{(m-3)!}}^2 \int_{\mathbb R_{x_1}} \comii{x_1}^\ell\overline{\partial_1^{m+1}u^{I,0}_1}(t,x_1) \comii{x_1}^\ell\partial_1^mu(t,x_1, 0)\,dx_1\\
 &&+\sum_{m=3}^N(2m-2)\frac{\rho'(t) \rho^{2m-3}}{\left[(m-3)!\right]^2}  \int_{\mathbb R_{x_1}} \comii{x_1}^\ell\overline{\partial_1^{m+1}u^{I,0}_1}(t,x_1) \comii{x_1}^\ell\partial_1^mu(t,x_1, 0)\,dx_1+\abs{u}_{\rho,a}\\
 &&+ {1\over 4}     \abs{u}_{Z_{\rho,a}}^2 +C_0 \sum_{m=3}^N \inner{ \frac{\rho^{m-1}}{(m-3)!}}^2 \norm{y\varphi_m}_{L^2(\mathbb R_+^2)}^2+C  \abs{u}_{Z_{\rho,a}} \abs{u}_{Y_{\rho,a} }^2+C \inner{\abs{u}_{X_{\rho,a}}^2 +  \abs{u}_{X_{\rho,a} }^4},
   \end{eqnarray*}
   the last inequality following from Corollary \ref{cor0222}. 
   On the other hand,  
   \begin{eqnarray*}
   	&&\sum_{m=3}^N(2m-2)\frac{\rho'(t) \rho^{2m-3}}{\left[(m-3)!\right]^2}  \int_{\mathbb R_{x_1}} \comii{x_1}^\ell\overline{\partial_1^{m+1}u^{I,0}_1}(t,x_1) \comii{x_1}^\ell\partial_1^mu(t,x_1, 0)\,dx_1 \\
   	&\leq& C \sum_{m=3}^N \frac{2^m \rho'(t) \rho^{2m-3}}{\left[(m-3)!\right]^2} \norm{\comii{x_1}^\ell \overline{\partial_1^{m+1}u^{I,0}_1}}_{L^2\inner{\mathbb R_{x_1}}} \inner{\norm{\comii{x_1}^\ell \partial_1^mu}_{L^2}+\norm{\comii{x_1}^\ell \partial_1^m\partial_y u}_{L^2}}\\
   	&\leq& C \sum_{m=3}^N  \frac{2^m \rho'(t) \rho^{m-2}}{\tau^{m+3}}  \norm {u^{I,0}_1}_{G_\tau} \left[\frac{    \rho^{m-1}}{(m-3)!}  \inner{\norm{\comii{x_1}^\ell \partial_1^mu}_{L^2}+\norm{\comii{x_1}^\ell \partial_1^m\partial_y u}_{L^2}}\right]
   	\\
   	&\leq& C  \abs{\rho'}\rho^{-2} \abs{u}_{X_{\rho,a}} 
   	   \end{eqnarray*}   
   	  the last inequality using the fact that $\rho<\tau/3$.   As a result, combining the equalities above  yields
   	   \begin{eqnarray*}
   	&&\sum_{m=3}^N \inner{ \frac{\rho^{m-1}}{(m-3)!}}^2 \com{\frac{1}{2}\frac{d}{dt} \norm{\varphi_m}_{L^2(\mathbb R_+^2)} ^2+  \norm{\partial_y\varphi_m}_{L^2(\mathbb R_+^2)} ^2-a'(t)\norm{y  \varphi_m}_{L^2(\mathbb R_+^2)}^2} \\
   	&\leq& {1\over 4}     \abs{u}_{Z_{\rho,a}}^2  -\frac{d}{dt}\sum_{m=3}^N \inner{ \frac{\rho^{m-1}}{(m-3)!}}^2 \int_{\mathbb R_{x_1}} \comii{x_1}^\ell\overline{\partial_1^{m+1}u^{I,0}_1}(t,x_1) \comii{x_1}^\ell\partial_1^mu(t,x_1, 0)\,dx_1\\
 &&+C_0 \sum_{m=3}^N \inner{ \frac{\rho^{m-1}}{(m-3)!}}^2 \norm{y\varphi_m}_{L^2(\mathbb R_+^2)}^2+C  \abs{u}_{Z_{\rho,a}} \abs{u}_{Y_{\rho,a} }^2+C \inner{  \abs{\rho'}\rho^{-2} \abs{u}_{X_{\rho,a}} +\abs{u}_{X_{\rho,a}}^2 +  \abs{u}_{X_{\rho,a} }^4}.
   \end{eqnarray*}
Moreover from  the relations
\begin{eqnarray*}
	  \norm{\partial_y\varphi_m}_{L^2(\mathbb R_+^2)} ^2 \geq \frac{1}{2} \norm{\comii{x_1}^\ell e^{a  y^2}\partial_1^m\partial_y^2  u (t)}_{L^2(\mathbb R_+^2)} ^2-2a_0^2  \norm{y\varphi_m}_{L^2(\mathbb R_+^2)} ^2
\end{eqnarray*}
and
   \begin{eqnarray*}
   	&&	\frac{1}{2}\frac{d}{dt} \inner{ \frac{\rho^{m-1}}{(m-3)!}}^2\norm{\varphi_m}_{L^2(\mathbb R_+^2)} ^2+ \inner{ \frac{\rho^{m-1}}{(m-3)!}}^2\norm{\partial_y\varphi_m}_{L^2(\mathbb R_+^2)} ^2-a'(t) \inner{ \frac{\rho^{m-1}}{(m-3)!}}^2 \norm{y  \varphi_m}_{L^2(\mathbb R_+^2)}^2\\
&&\qquad\qquad\qquad\qquad-\rho'(t)\inner{(m-1)^{1/2} \rho^{-1/2} \frac{\rho^{m-1}}{(m-3)!}\norm{\varphi_m}_{L^2(\mathbb R_+^2)} }^2\\
&=& \inner{ \frac{\rho^{m-1}}{(m-3)!}}^2 \com{\frac{1}{2}\frac{d}{dt} \norm{\varphi_m}_{L^2(\mathbb R_+^2)} ^2+  \norm{\partial_y\varphi_m}_{L^2(\mathbb R_+^2)} ^2-a'(t)\norm{y  \varphi_m}_{L^2(\mathbb R_+^2)}^2},
   \end{eqnarray*}
  it follows that
   \begin{eqnarray*}
   	&&	\frac{1}{2}\frac{d}{dt} \sum_{m=3}^N\inner{ \frac{\rho^{m-1}}{(m-3)!}}^2\norm{\varphi_m}_{L^2(\mathbb R_+^2)} ^2+\frac{1}{2}\sum_{m=3}^N \inner{ \frac{\rho^{m-1}}{(m-3)!}}^2\norm{\comii{x_1}^\ell e^{a  y^2}\partial_1^m\partial_y^2  u (t)}_{L^2(\mathbb R_+^2)} ^2 \\
   	&&\qquad\qquad\qquad -\inner{a'(t)+2a_0^2}\sum_{m=3}^N \inner{ \frac{\rho^{m-1}}{(m-3)!}}^2\norm{y\varphi_m}_{L^2(\mathbb R_+^2)} ^2 \\
&&\qquad\qquad\qquad\qquad-\rho'(t)\sum_{m=3}^N \inner{(m-1)^{1/2} \rho^{-1/2} \frac{\rho^{m-1}}{(m-3)!}\norm{\varphi_m}_{L^2(\mathbb R_+^2)} }^2\\
&\leq &{1\over 4}     \abs{u}_{Z_{\rho,a}}^2 -\frac{d}{dt}\sum_{m=3}^N \inner{ \frac{\rho^{m-1}}{(m-3)!}}^2 \int_{\mathbb R_{x_1}} \comii{x_1}^\ell\overline{\partial_1^{m+1}u^{I,0}_1}(t,x_1) \comii{x_1}^\ell\partial_1^mu(t,x_1, 0)\,dx_1\\
 &&+C_0 \sum_{m=3}^N \inner{ \frac{\rho^{m-1}}{(m-3)!}}^2 \norm{y\varphi_m}_{L^2(\mathbb R_+^2)}^2+C  \abs{u}_{Z_{\rho,a}} \abs{u}_{Y_{\rho,a} }^2+C \inner{  \abs{\rho'}\rho^{-2} \abs{u}_{X_{\rho,a}} +\abs{u}_{X_{\rho,a}}^2 +  \abs{u}_{X_{\rho,a} }^4}.
   \end{eqnarray*}
   Now observing 
 $
 	a(t)=a_0-\inner{2a_0^2+C_0}t,
$
we complete the proof. 
   \end{proof}
   
   \begin{proof}[{\bf Completion of the proof of Proposition \ref{prenes}}]
   By Lemma	\ref{lem0222}, we integrate both sides over $[0, t]\subset[0,T]$ and then let $N\rightarrow +\infty$, to obtain that  for any $t\in [0,T],$
   \begin{eqnarray*}
   	&&  \sum_{m=3}^{+\infty} \inner{\frac{\rho^{m-1}}{(m-3)!}}^2 \norm{\varphi_m(t)}_{L^2(\mathbb R_+^2)} ^2+\int_0^T \sum_{m=3}^{+\infty} \inner{ \frac{\rho^{m-1}}{(m-3)!}}^2\norm{\comii{x_1}^\ell e^{a  y^2}\partial_1^m\partial_y^2  u (t)}_{L^2(\mathbb R_+^2)} ^2dt\\
 		&&\qquad-\int_0^T \rho'(t) \sum_{m=3}^{+\infty} \frac{m-1}{\rho}\inner{\frac{\rho^{m-1}}{(m-3)!}}^2 \norm{ \varphi_m(t)}_{L^2(\mathbb R_+^2)} ^2 dt\\
 		&\leq &\abs{u_0}_{X_{\rho_0,a_0} }^2+\frac{1}{2}\int_0^T\abs{u(t)}_{Z_{\rho,a}}^2dt+C \int_0^T \inner{\abs{\rho'(t)}\rho^{-2}\abs{u(t)}_{X_{\rho,a} }+\abs{u(t)}_{X_{\rho,a} }^2+ \abs{u(t)}_{X_{\rho,a}}^4}dt\\
 		&&+C \int_0^T  \abs{u(t)}_{Z_{\rho,a}} \abs{u(t)}_{Y_{\rho,a} }^2dt.
   \end{eqnarray*}
   Direct computation also gives 
    \begin{eqnarray*}
   	&&  \sum_{m\leq 2}  \norm{\varphi_m(t)}_{L^2(\mathbb R_+^2)} ^2+\int_0^T \sum_{m\leq 2}  \norm{\comii{x_1}^\ell e^{a  y^2}\partial_1^m\partial_y^2  u (t)}_{L^2(\mathbb R_+^2)} ^2dt-\int_0^T \rho'(t) \sum_{m\leq 2}^{+\infty}   \norm{ \varphi_m(t)}_{L^2(\mathbb R_+^2)} ^2 dt\\
 		&\leq &\abs{u_0}_{X_{\rho_0,a_0} }^2+\frac{1}{2}\int_0^T\abs{u(t)}_{Z_{\rho,a}}^2dt+C \int_0^T \inner{\abs{\rho'(t)}\rho^{-2}\abs{u(t)}_{X_{\rho,a} }+\abs{u(t)}_{X_{\rho,a} }^2+ \abs{u(t)}_{X_{\rho,a}}^4}dt\\
 		&&+C \int_0^T  \abs{u(t)}_{Z_{\rho,a}} \abs{u(t)}_{Y_{\rho,a} }^2dt.
   \end{eqnarray*}
   Similarly,  using the notation
 \begin{eqnarray*}
 	\psi_m =\comii{x_1}^\ell e^{a  y^2}\partial_1^m  u (t),
 \end{eqnarray*}
 we can deduce, following  the proof of  the above two inequalities with slight modification and simpler arguments,  
  \begin{eqnarray*} 
 		&&\sum_{m\leq 2}\norm{\psi_m(t)}_{L^2}^2+\sum_{m=3}^{+\infty} \inner{\frac{\rho^{m-1}}{(m-3)!}}^2 \norm{\psi_m(t)}_{L^2(\mathbb R_+^2)} ^2\\
 		&&\qquad+\int_0^T\inner{ \sum_{m\leq 2}\norm{\comii{x_1}^\ell e^{a  y^2}\partial_1^m\partial_y  u (t)}_{L^2(\mathbb R_+^2)} ^2+\sum_{m=3}^{+\infty} \inner{ \frac{\rho^{m-1}}{(m-3)!}}^2\norm{\comii{x_1}^\ell e^{a  y^2}\partial_1^m\partial_y  u (t)}_{L^2(\mathbb R_+^2)} ^2} dt\\
 		&&\qquad-\int_0^T \rho'(t) \inner{ \sum_{m\leq 2}\norm{ \psi_m}_{L^2}^2+\sum_{m=3}^{+\infty} \frac{m-1}{\rho}\inner{\frac{\rho^{m-1}}{(m-3)!}}^2 \norm{ \psi_m}_{L^2(\mathbb R_+^2)} ^2} dt\\
 		&\leq &\abs{u_0}_{X_{\rho_0,a_0} }^2+\frac{1}{2}\int_0^T\abs{u(t)}_{Z_{\rho,a}}^2dt+C \int_0^T \inner{\abs{\rho'(t)}\rho^{-2}\abs{u(t)}_{X_{\rho,a} }+\abs{u(t)}_{X_{\rho,a} }^2+ \abs{u(t)}_{X_{\rho,a}}^4}dt\\
 		&&+C \int_0^T  \abs{u(t)}_{Z_{\rho,a}} \abs{u(t)}_{Y_{\rho,a} }^2dt
 	  \end{eqnarray*}
Combining these inequalities we conclude, observing  the definition of $\abs{\cdot}_{X_{\rho,a }}, \abs{\cdot}_{Y_{\rho,a }}$ and  $\abs{\cdot}_{Z_{\rho,a }}$ and any $\rho\leq \min\set{\rho_0, \tau/3}$,
\begin{eqnarray*} 
 		&&\abs{u(t)}_{X_{\rho,a }}^2
 		 +\int_0^T \abs{u(t)}_{Z_{\rho,a }}^2dt -\int_0^T \rho'(t) \abs{u(t)}_{Y_{\rho,a }}^2 dt\\
 		&\leq &\abs{u_0}_{X_{\rho_0,a_0} }^2+\frac{1}{2}\int_0^T\abs{u(t)}_{Z_{\rho,a}}^2dt+C \int_0^T \inner{\abs{\rho'(t)}\rho^{-2}\abs{u(t)}_{X_{\rho,a} }+\abs{u(t)}_{X_{\rho,a} }^2+ \abs{u(t)}_{X_{\rho,a}}^4}dt\\
 		&&+C \int_0^T  \abs{u(t)}_{Z_{\rho,a}} \abs{u(t)}_{Y_{\rho,a} }^2dt
 	  \end{eqnarray*}
 	  Thus Claim \eqref{53} follows and the proof is complete.
   \end{proof}

\section{Existence of solution for second component} \label{section5}

In this section, we determine the second component $\Ucal^{p,0}_2$ by solving the parabolic-type equation
$$
	\left\{
	\begin{aligned}
		&\partial_t \Ucal^{p,0}_2 - \partial_y^2 \Ucal^{p,0}_2 + \Ucal^{p,0}_1 \partial_1 \Ucal^{p,0}_2 + \Ucal^{p,1}_3 \partial_y \Ucal^{p,0}_2 = 0,\\		&\Ucal^{p,0}_2(t,x_1,0) = 0,\quad\lim_{y\to +\infty} \Ucal^{p,0}_2(t,x_1,y) = \overline{u^{I,0}_2}(x_1),\\
		&\Ucal^{p,0}_2(0,x_1,y) = u^{B,0}_{0,2} (x_1,y) + \overline{u^{I,0}_{0,2}}(x_1)\, .
	\end{aligned}
	\right.
$$
We recall that 
\begin{equation}
	\label{eq:traceUI02bis} \partial_t\overline{u^{I,0}_2} + \overline{u^{I,0}_1} \partial_1 \overline{u^{I,0}_2} = 0\, .
\end{equation}
Then, the system \eqref{eq:P2} becomes
\begin{equation}
	\label{eq:P2B}\tag{P2bis}
	\left\{
	\begin{aligned}
		&\partial_t u^{B,0}_2 - \partial_y^2 u^{B,0}_2 + \pare{u^{B,0}_1 + \overline{u^{I,0}_1}} \partial_1 u^{B,0}_2 + \pare{u^{B,1}_3 + \overline{u^{I,1}_3} + y \overline{\partial_3u^{I,0}_3}} \partial_y u^{B,0}_2 + \overline{\partial_1 u^{I,0}_2} u^{B,0}_1 = 0,\\
		&\partial_2 u^{B,0}_2 = 0,\\
		&u^{B,0}_2(t,x_1,0) = -\overline{u^{I,0}_2},\quad 
		 \lim_{y\to +\infty} u^{B,0}_2(t,x_1,y) = 0,\\
		&u^{B,0}_2(0,x_1,y) = u^{B,0}_{0,2} (x_1,y).
	\end{aligned}
	\right.
\end{equation}
We have the following results

\begin{thm} \label{th:P2sys}
	Let $\rho_0 > 0$, $a_0 > 0$. For any initial data $u^{B,0}_{2,0} \in X_{\rho_0, a_0}$, there exists $T>0$, $\tau>0$ and $0<a<a_0$, such that the system \eqref{eq:P2B} admits a unique solution $u^{B,0}_2 \in L^{\infty}\pare{[0,T], X_{\rho_0, a}}$.
\end{thm}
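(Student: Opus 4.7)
The plan is to treat \eqref{eq:P2B} as a genuinely linear problem for the unknown $u^{B,0}_2$, using the fact that the coefficients
$$
\Ucal^{p,0}_1 = u^{B,0}_1 + \overline{u^{I,0}_1},\qquad \Ucal^{p,1}_3 = u^{B,1}_3 + \overline{u^{I,1}_3} + y\,\overline{\partial_3 u^{I,0}_3},\qquad \overline{\partial_1 u^{I,0}_2},\qquad \overline{\partial_3 u^{I,0}_3}
$$
are all already produced by Theorems \ref{th:E2D}, \ref{th:E2Dlin} and \ref{th:Prandtl} and enjoy known analytic regularity in the tangential variable. I will follow the same overall scheme as in the proof of Theorem \ref{th41}: introduce a parabolic regularisation, prove a local existence result at fixed $\varepsilon>0$, establish an $\varepsilon$-uniform analytic energy estimate on $[0,T]$, and finally pass to the limit by weak compactness.

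First I would homogenise the boundary condition. Because $u^{B,0}_2\big|_{y=0}=-\overline{u^{I,0}_2}(t,x_1)\neq 0$, set
$$
w(t,x_1,y)=u^{B,0}_2(t,x_1,y)+\overline{u^{I,0}_2}(t,x_1)\,\chi(y),
$$
where $\chi\in C^{\infty}(\mathbb{R}_+)$ satisfies $\chi(0)=1$ and decays like $e^{-y^2}$ at infinity. Using \eqref{eq:traceUI02bis}, the function $w$ satisfies a linear equation of the same type as \eqref{eq:P2B}, with homogeneous boundary values $w|_{y=0}=0$ and $\lim_{y\to+\infty}w=0$, and with a right-hand side made of products between $\chi$ (or its derivatives) and the known quantities $\overline{u^{I,0}_1},\overline{u^{I,0}_2},\Ucal^{p,0}_1,\Ucal^{p,1}_3$; these lie in appropriate weighted analytic spaces. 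I then regularise exactly as in \eqref{linsys+}, adding $-\varepsilon\partial_1^2 w$, and invoke classical linear parabolic theory to obtain a local-in-time solution $w^\varepsilon \in L^\infty\bigl([0,T_\varepsilon];X_{\rho_0,a}\bigr)$.

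The heart of the proof is an $\varepsilon$-uniform energy estimate in $X_{\rho,a}$, parallel to Proposition \ref{prenes}. Setting $\psi_m=\comii{x_1}^\ell e^{ay^2}\partial_1^m w^\varepsilon$ and differentiating the equation $m$ times in $x_1$, one multiplies by $\psi_m$ and integrates. The decisive bilinear terms are
$$
\sum_{k=0}^m \binom{m}{k}(\partial_1^k \Ucal^{p,0}_1)(\partial_1^{m-k+1}w^\varepsilon)\quad\text{and}\quad \sum_{k=0}^m \binom{m}{k}(\partial_1^k \Ucal^{p,1}_3)(\partial_1^{m-k}\partial_y w^\varepsilon),
$$
together with the lower-order source $\overline{\partial_1 u^{I,0}_2}\,\Ucal^{p,0}_1$. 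Each such bilinear expression now has one factor which is a \emph{given} function in $X_{\rho_*,a}$ (for the boundary-layer coefficients) or in $\mathcal{A}_\tau$ (for the outer-flow traces), so the Sobolev-in-$y$/$L^2$-in-$x_1$ splitting used in Lemmas \ref{R8}--\ref{Rothers} applies verbatim, and the Young inequality for discrete convolutions bounds these sums by $C\bigl|w^\varepsilon\bigr|_{Y_{\rho,a}}\bigl|w^\varepsilon\bigr|_{Z_{\rho,a}}$ times the known analytic norms of the coefficients. For the top-order contribution $\Ucal^{p,1}_3\,\partial_y\partial_1^m w^\varepsilon\cdot\psi_m$ one integrates by parts in $y$; the boundary term vanishes by $w^\varepsilon|_{y=0}=0$, leaving only $\partial_y\Ucal^{p,1}_3$ as a coefficient, which is controlled in $L^\infty_y(L^2_{x_1})$ by $\bigl\|u^{B,1}_3\bigr\|_{X_{\rho_*,a}}$ from Theorem \ref{th:Prandtl}. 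Because the equation is \emph{linear} in $w^\varepsilon$, the resulting differential inequality has only linear-in-$|w^\varepsilon|_{X_{\rho,a}}^2$ growth on the right-hand side (with coefficients depending on the known data), so choosing $\rho(t)$ as a slowly decreasing function bounded below by $\rho_*/2$ on some interval $[0,T]$, together with $a(t)=a_0-Ct$ to absorb the weight shifts exactly as in Lemma \ref{lem0222}, is enough to close the estimate.

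The expected main difficulty is purely bookkeeping: matching the analytic radii of the two sources of data. The factor $\Ucal^{p,1}_3$ comes from Theorem \ref{th:Prandtl} at some radius $\rho_*$, while $\overline{u^{I,0}_2}$ comes from Theorem \ref{th:E2D} at radius $\tau$; this forces the final radius to be $\rho=\min\{\rho_0,\rho_*,\tau/3\}/C$ and the final analytic weight to be $a<a_0$, which is why the statement allows shrinking of both parameters. Once the uniform bound
$$
\sup_{t\in[0,T]}\bigl|w^\varepsilon(t)\bigr|_{X_{\rho,a}}\leq C\bigl(|u^{B,0}_{2,0}|_{X_{\rho_0,a_0}},|u^{I,0}_{2,0}|_{\mathcal{A}_{\tau_0}}\bigr)
$$
is established, a standard bootstrap extends the lifespan to a common $T>0$, and a compactness argument lets $\varepsilon\to 0$ along a subsequence; the limit $w$ solves the linear system with the homogeneous boundary data, and returning $u^{B,0}_2=w-\overline{u^{I,0}_2}\chi$ yields the solution to \eqref{eq:P2B}. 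Uniqueness follows from the linearity of the equation and an energy argument on the difference of two solutions, which is simpler than the a priori estimate since no nonlinear closure is needed.
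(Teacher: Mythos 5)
Your proposal is correct and follows essentially the same route as the paper: the paper also homogenises the boundary condition by setting $v = u^{B,0}_2 + e^{-2a_0y^2}\,\overline{u^{I,0}_2}$ (your $\chi$ with Gaussian decay, made explicit so the weight $e^{ay^2}$ with $a<a_0$ is automatically compatible), rewrites \eqref{eq:P2B} as a linear system of the same form as \eqref{linsys} with Dirichlet conditions and a known source $R$, and then repeats the scheme of Theorem \ref{th41} (parabolic regularisation, $\varepsilon$-uniform analytic energy estimate, compactness), exactly as you outline.
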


\noindent \textbf{Proof.}
In order to prove Theorem \ref{th:P2sys}, the idea is to define an auxiliary function $$v = u^{B,0}_2 + e^{-2a_0y^2} \overline{u^{I,0}_2},$$ which satisfies the following boundary conditions
$$v(t,x_1,0) = \lim_{y\to +\infty} v(t,x_1,y) = 0.$$ Then, the first equation of the system \eqref{eq:P2B} becomes
\begin{multline*}
	\partial_t \pare{v - e^{-2a_0y^2} \overline{u^{I,0}_2}} - \partial_y^2 \pare{v - e^{-2a_0y^2} \overline{u^{I,0}_2}} + \pare{u^{B,0}_1 + \overline{u^{I,0}_1}} \partial_1 \pare{v - e^{-2a_0y^2} \overline{u^{I,0}_2}}\\ 
	+ \pare{u^{B,1}_3 + \overline{u^{I,1}_3} + y \overline{\partial_3u^{I,0}_3}} \partial_y \pare{v - e^{-2a_0y^2} \overline{u^{I,0}_2}} + \overline{\partial_1 u^{I,0}_2} u^{B,0}_1 = 0.
\end{multline*}
Using \eqref{eq:traceUI02bis}, we can rewrite the system \eqref{eq:P2B} as
\begin{equation}
	\label{eq:P2Bv}\tag{$P2_v$}
	\left\{
	\begin{aligned}
		&\partial_t v - \dd_y^2 v + \pare{u^{B,0}_1 + \overline{u^{I,0}_1}} \partial_1 v + \pare{u^{B,1}_3 + \overline{u^{I,1}_3} + y \overline{\partial_3u^{I,0}_3}} \partial_y v + R = 0,\\
		&\partial_2 v = 0,\\
		&v(t,x_1,0) = 0,\quad\lim_{y\to +\infty} u^{B,0}_2(t,x_1,y) = 0,\\
		&v(0,x_1,y) = u^{B,0}_{0,2} (x_1,y) + e^{-2a_0y^2} \overline{u^{I,0}_{0,2}}(x_1),
	\end{aligned}
	\right.
\end{equation}
where
$$ 
R = \pare{16 a_0^2 y^2 - 4a_0} e^{-2a_0y^2} \overline{u^{I,0}_2} + 4a_0 \pare{u^{B,1}_3 + \overline{u^{I,1}_3} + y \overline{\partial_3u^{I,0}_3}} y e^{-2a_0y^2} \overline{u^{I,0}_2} + \pare{1 - e^{-2a_0y^2}} \overline{\partial_1 u^{I,0}_2} u^{B,0}_1.
$$
We remark that the system \eqref{eq:P2Bv} is in the same form as the system \eqref{linsys} with Dirichlet boundary conditions. Thus, we can prove Theorem \ref{th:P2sys} in the same way (with a lot of simplifications) as we did to prove Theorem \ref{th41}. 

 \bigskip
 \noindent {\bf Acknowledgements.}
 The research of the first author was supported by NSF of China(11422106) and Fok Ying Tung Education Foundation (151001), and he  would like to thank the invitation of   `` laboratoire de math\'ematiques Rapha\"el Salem'' of the Universit\'e de Rouen.
 The second author would like to express his sincere thanks to School of mathematics and statistics of Wuhan University for the invitations.
The research of the last author is supported partially by ``The Fundamental Research Funds for Central Universities of China".

\end{document}